\setlist[enumerate]{itemsep=5pt, topsep=5pt,after=\vspace{3pt}}
\setlist[itemize]{itemsep=5pt, topsep=5pt,after=\vspace{3pt}}
\renewcommand{\l}{\left}
\renewcommand{\r}{\right}
\newcommand{\mbf}{\mathbf}
\newcommand{\bz}{{\bf z}}
\newcommand{\bh}{{\bf h}}
\newcommand{\bv}{{\bf v}}
\newcommand{\bw}{{\bf w}}
\newcommand{\arr}{{\text{arr}}}
\newcommand{\E}{\mathbb E\,}
\newcommand{\Z}{\mathbb Z}
\newcommand{\ol}{\overline}
\newcommand{\bml}{{\bm\lambda}}
\newcommand{\calZ}{\mathcal Z}
\newcommand{\R}{\mathbb R}
\newcommand{\p}{\mathbb P}
\renewcommand{\L}{\mathcal L}
\newcommand{\J}{\mathcal J}
\newcommand{\e}{\,\mathrm{exp}}
\newcommand{\Law}{{\mathrm{Law}}}
\newcommand{\gibbs}{{\text{gibbs}}}
\newcommand{\hatJgibbs}{\hat J_\gibbs}
\newcommand{\hatJ}{\hat J}
\newcommand{\rhogibbsNd}{\rho[\bml]}
\newcommand{\eV}{\eqref{V}}
\newcommand{\eE}{\eqref{E}}
\newcommand{\eEp}{\eqref{Ep}}
\newcommand{\eEf}{\eqref{Ef}}
\newcommand{\eEfp}{\eqref{Efp}}
\newcommand{\beq}{\begin{equation}}
\newcommand{\eeq}{\end{equation}}
\newcommand{\beqn}{\begin{equation*}}
\newcommand{\eeqn}{\end{equation*}}
\def\beqs#1\eeqs{%
    \begin{equation}\begin{split}%
    #1%
    \end{split}\end{equation}%
}
\def\beqsn#1\eeqsn{%
    \begin{equation*}\begin{split}%
    #1%
    \end{split}\end{equation*}%
}
\newcommand{\unit}{{\mathbb T}}
\newcommand{\cts}{C(\unit)}
\newcommand{\bump}{C^\infty(\unit)}
\newcommand{\idxsetx}{{N(x\pm\epsilon)}}
\newcommand{\argidxsetx}[1]{{#1\in \idxsetx}}
\newcommand{\varbar}[1]{\ol{\bf #1}_\idxsetx}
\newcommand{\fxnbar}[2]{\bar #1({\bf #2}_\idxsetx)}
\newcommand{\epsDel}{{\epsilon,\Delta}}
\newcommand{\Nepslim}{N\to\infty,\,\epsilon\to0}
\newcommand{\limNeps}{\lim_{\epsilon\to0}\lim_{N\to\infty}}
\newcommand{\comment}[1]{\textcolor{red}{[#1]}}
\newcommand{\vN}{\mathbf v_N}
\newcommand{\wN}{\mathbf w_N}
\newcommand{\zN}{\mathbf z_N}
\newcommand{\hN}{\mathbf h_N}
\newcommand{\LN}{\L_N}
\newcommand{\probto}{\stackrel{\text{in prob}}{\to}}
\title{From local equilibrium to numerical PDE: Metropolis crystal surface dynamics in the rough scaling limit\thanks{\textbf{Funding}: This material is based upon work supported by U.S. Department of Energy, Office of Science, Office of Advanced Scientific Computing Research, Department of Energy Computational Science Graduate Fellowship under Award Number DE-FG02-97ER25308. The author was also supported in part by the Research Training Group in Modeling and Simulation funded by the National Science Foundation via grant RTG/DMS – 1646339}}
\author{Anya Katsevich\thanks{Department of Mathematics, Courant Institute of Mathematical Sciences, New York University. \email{katsevich@cims.nyu.edu}}}
\begin{document}

\normalem
\maketitle
\begin{abstract}
We derive the PDE governing the hydrodynamic limit of a Metropolis rate crystal surface height process in the ``rough scaling" regime introduced by Marzuola and Weare. The PDE takes the form of a continuity equation, and the expression for the current involves a numerically computed multiplicative correction term similar to a mobility. The correction accounts for the fact that, unusually, the local equilibrium distribution of the process is not a local Gibbs measure even though the global equilibrium distribution \emph{is} Gibbs. We give definitive numerical evidence of this fact, originally suggested in Gao, et. al., \emph{Pure and Applied Analysis} (2021). In that paper, an approximate PDE --- our PDE, but without the correction term --- was derived for the limit of the Metropolis rate process under the assumption of a local Gibbs distribution. Our main contribution is to present a numerical method to compute the corrected macroscopic current, which is given by a function of the third spatial derivative of the height profile. Our method exploits properties of the local equilibrium (LE) state of the third order finite difference process. We find that the LE state of this process is not only useful for deriving the PDE; it also enjoys nonstandard properties which are interesting in their own right. Namely, we demonstrate that the LE state is a ``rough LE", a novel kind of LE state discovered in our recent work on an Arrhenius rate crystal surface process. \end{abstract}


\section{Introduction}
%
%
%
%
%
%
%
%
Consider a microscopic particle system in global equilibrium. Taking the viewpoint of equilibrium statistical mechanics, we can describe the system by its ensemble, a probability distribution over particle configurations. Typically, physical principles dictate that the distribution belong to some family of distributions, and a few average statistics of the system (e.g. the mean) determine the particular distribution in this family. For example, the speeds of particles in an idealized gas follow a distribution in the one-parameter family of Maxwell-Boltzmann distributions~\cite{statphys}. The average speed of the particles in the gas determines the parameter. Now consider an out-of-equilibrium particle system, evolving in time toward its global equilibrium state. If the system is \emph{locally} equilibrated, then an analogous principle applies. There is a single family of probability distributions governing the particle configurations in each space-time region of \emph{mesoscopic} extent, an intermediate scale between micro- and macroscopic. For example, the speeds of particles in a mesoscopic region of a \emph{locally} equilibrated gas can still be expected to be Maxwell-Boltzmann distributed. But unlike globally equilibrated systems, the average statistics determining the parameter now vary among these mesoscopic regions, and they also vary in time. 

If we rescale time and space appropriately, a macroscopic equation of motion ---  a partial differential equation --- emerges from the microscopic particle dynamics. The PDE governs the evolution in time of a limit of these local mesoscopic statistics, as they tend toward their single constant value in global equilibrium. Typically, knowing the parameterized family of local equilibrium (LE) distributions is sufficient to determine the PDE. 

In this paper, we derive the PDE limit of a stochastic microscopic dynamics modeling particle diffusion on a crystal surface. The global equilibrium (GE) family for this particle system are the Gibbs measures. Unusually, however, the LE family is \emph{not} made up of local Gibbs measures, as we show numerically. In other words, the LE family is \emph{not} the same as the GE family. Moreover, the LE family is not known explicitly at all. As such, it is impossible to obtain an exact analytic expression for the PDE, and we opt for a numerical approach instead. Our approach to derive the PDE exploits fundamental properties of LE states without needing to know the LE family explicitly. Although understanding why the LE family is not local Gibbs is an interesting and important problem, it is beyond the scope of this paper. 

\subsection{Background and Main Contribution} We now give some background on the crystal surface model and on related works. We model the crystal surface as a collection of particles arranged in a height profile on a one-dimensional periodic lattice. At lattice site $i$, the height $h_i$ represents the number of particles which are stacked in a column above (positive ``height'')  or below (negative ``height'') the lattice, which represents height zero. 
 The particle dynamics is governed by a Markov jump process $\hN(t)\in\Z^N$ (where $N$ is the lattice size), in which the topmost particles jump to neighboring columns with certain jump rates. Jumps which lower the surface energy have higher rates than jumps which increase it, where the energetically optimal configuration is a flat surface. In this context, the micro-to-macro limit is known as a ``hydrodynamic" limit, obtained by scaling height, time, and lattice width with $N$ according to a certain scaling regime, and taking $N\to\infty$. The limit is a macroscopic height profile $h(t,x)$, where the spatial domain is the unit torus. 

Here, we assume the microscopic dynamics evolves under ``Metropolis-type'' jump rates, which are functions only of the difference in the surface energy before and after the jump. We study the macroscopic limit in a nonstandard, so-called ``rough'' scaling regime. 
The rough scaling regime was introduced in~\cite{mw-krug} to study the limit of the better-known, \emph{Arrhenius} jump rate crystal surface dynamics. Marzuola and Weare show that the PDE of the Arrhenius process in this rough scaling limit takes the form $h_t = \partial_{xx}\e(-h_{xx})$. Meanwhile, the PDE governing the more standard scaling limit (which the authors call the ``smooth scaling regime") is essentially given by linearizing the exponential in the rough PDE. Thus, the rough PDE describes surfaces $h$ in which $|h_{xx}|$ is large (and cannot be linearized), so that $h$ is ``rapidly varying" and hence ``rough". For a discussion of the physical relevance of the rough scaling regime, see e.g.~\cite{gao2020_arrPDE,asymmetry,mw-krug}. 

Similarly, the rough scaling limit for the Metropolis rate process is the limit which leads to a PDE with exponential nonlinearity. The Metropolis rough scaling limit was first studied in~\cite{gao2020}. In this work, the authors assume the LE distribution can be approximated by a local Gibbs measure to derive the approximate PDE
\beq\label{introPDE-gibbs}h_t = -\partial_x\l(\sinh(Kh_{xxx})\r),\eeq where $K$ is inverse temperature. That the PDE takes the form of a continuity equation naturally follows from the microscopic dynamics, which preserves total sum of heights (see~\eqref{evoln-h} for the corresponding microscopic continuity equation). However, the current $\hatJgibbs(h_{xxx}):=\sinh(Kh_{xxx})$ is not the true macroscopic current. Indeed, the authors of~\cite{gao2020} observe a discrepancy between the solution to the PDE~\eqref{introPDE-gibbs} and the microscopic process $\hN$, which does not vanish as one increases $N$. 

Our main contribution in this paper is a numerical method which removes this discrepancy. Namely, we compute a multiplicative correction $\sigma$ to the current, to obtain the true current $\hat J=\sigma\times\hatJgibbs$, and the true PDE
\beq\label{introPDE}h_t = -\partial_x\l(\sigma(h_{xxx})\sinh(Kh_{xxx})\r).\eeq
The method uses sample runs of the microscopic process generated from only a single initial datum, and requires evolving the process in time only sufficiently long to reach local, rather than global, equilibrium. The function $\sigma$ is $K$-dependent and converges to $1$ as $K\downarrow0$. This shows that the local Gibbs approximation becomes accurate in the small $K$ limit. Using observed qualitative properties of $\sigma$ (e.g. that it is even and increasing when $h_{xxx}>0$), we also extend the results of~\cite{gao2020} on properties of the PDE~\eqref{introPDE-gibbs}. Namely, we show that strong solutions of~\eqref{introPDE} exist, are unique, and enjoy the same regularity properties as those shown for solutions to~\eqref{introPDE-gibbs}. 

The way $\sigma$ appears in~\eqref{introPDE} bears some resemblance to a mobility: a medium-dependent constant of proportionality determining the current in a diffusion. The similarity is somewhat superficial, however, because the PDE~\eqref{introPDE} is not a standard diffusion. Indeed, the current does not follow Fick's law, since it is not proportional to the gradient of an appropriate potential. Nevertheless, we mention this similarity because like our correction $\sigma$,  mobilities arising in the continuum limit of microscopic processes often cannot be computed explicitly. There is a vast body of work on computing mobilities numerically, and we will not attempt to review it here. The closest such work to ours that we are aware of, in terms of similarity of the physical model, is~\cite{krug1995adatom}. The authors study the smooth scaling PDE limit of a microscopic crystal surface jump process, in which the rates are also of Metropolis type. The PDE limit takes the form of a standard diffusion, which allows the authors to use linear response theory to compute the slope-dependent mobility. In addition, we mention the work~\cite{Embacher2018} (see also the references therein). This work is similar to ours in that the authors' approach to compute the mobility only requires simulating the process until local equilibrium.

Since the PDE~\eqref{introPDE} is not a standard diffusion, methods for computing mobilities such as linear response are not available to us in computing the factor $\sigma$. Instead we develop an alternative numerical approach. It is borne out of the LE framework of our recent work~\cite{kat21} on rough-scaled processes, as we now explain. 

\subsection{Companion Finite Difference Process and Rough LE} Note that the macroscopic current in the PDE~\eqref{introPDE} is a function of $h_{xxx}$. This is a reflection of the fact that (1) the jump rates are functions of the third order finite difference $w_i:=h_{i+2}-3h_{i+1}+3h_i-h_{i-1}$, and (2) in the rough scaling regime, $w_i$ has order $O(1)$ as $N\to\infty$. This consideration motivates us to consider the companion process $\wN(t)=(w_1(t),\dots, w_N(t))$ --- in particular, the distribution $\Law(\wN(t))$ in local equilibrium --- as the central object of study. 
A finite difference (FD) process also plays a central role in our previous work~\cite{kat21}, in which we take a closer look at the Arrhenius rate process in the rough scaling regime. There, we show that the PDE governing the hydrodynamic limit $h$ is determined by the LE distribution of the second order FDs of the heights. We will call this second order FD process $\wN^\arr$, for comparison with the third order FD $\wN$ of the Metropolis height process.

We show in~\cite{kat21} that $\wN^\arr$ has a novel, ``rough'' LE state. The defining characteristic of the rough LE state is that the expected profile $(\E w_i^\arr)_{i=1}^N$ is rough in the sense that $|\E w_{i+1}^\arr - \E w_i^\arr|$ does not go to zero as $N$ increases. (The discovery of this rough profile retroactively lends a second meaning to the name ``rough scaling regime", which was coined earlier for different reasons). Moreover, the distributions $\Law(w_i)$ do not enjoy a crucially important feature enjoyed by more standard particle systems: belonging to a mean-parameterized measure family. However, we show that the probability distributions given by mesoscopic window averages of the single site marginals \emph{do} have this property, and their means \emph{do} vary smoothly across space. 

For the Metropolis process, we do not have explicit access to $\Law(\wN)$. However, we will show empirically that $\wN$ also has a rough LE state, confirming that this new kind of LE is not an isolated phenomenon. Building off the work in~\cite{kat21}, our numerical method for computing $\sigma$ exploits the crucial fact that upon mesoscopic window averaging, the LE state is described by \emph{some} mean-parameterized family. We will not need to know \emph{which} family this is. 

The function $\hat J=\sigma\times\hatJgibbs$ will be defined in terms of properties of the LE state of $\wN$. To show this same $\hat J$ is the macroscopic current arising in the $h$ PDE, we take two more steps. First, we prove that if $\wN$ converges to a macroscopic $w$ in an appropriate scaling regime, then $w$ must be the solution to $w_t=-\partial_{xxxx}\hat J(w)$. Second, we prove that $\hN$ then has a unique limit $h$ in the rough scaling regime, where $h_{xxx}=w$ and $h$ is the solution to $h_t=-\partial_x\hat J(h_{xxx})$. Our proofs rely on two boundedness conditions which we confirm numerically,  but are otherwise rigorous. These two steps were also informally described in~\cite{kat21} (in particular we did not check the boundedness conditions), but they served only as motivation for studying the LE properties in that paper. 


\subsection*{Organization}The rest of the paper is organized as follows. In Section~\ref{sec:micro}, we introduce the Metropolis height process as well as the companion finite difference processes. In Section~\ref{sec:hydro}, we define the hydrodynamic limit in the rough scaling regime and motivate studying the limit of $\hN$ via the limit of the third order FDs $\wN$. In Section~\ref{sec:theory}, we formalize this approach, proving that the limit $h$ of $\hN$ and the PDE governing it follow from the limit $w$ of $\wN$ and the corresponding PDE. We also introduce a key property of rough LE states which makes our numerical method possible. In Section~\ref{sec:LE}, we review the concept of LE states, show $\wN$ has a rough LE state, and explain how the macroscopic current $\hat J$ arises from LE properties. We also show the local Gibbs measure is not correct, so that we do not know the explicit form of the LE state and cannot compute $\hat J$ analytically. In Section~\ref{sec:num} we present our numerical method and confirm that we have derived the correct PDE for $h$. Finally, we analyze the PDE in Section~\ref{sec:PDE}, and make a few concluding remarks in Section~\ref{sec:conclude}.

\subsection*{Notation} For a sequence of vectors $\bv_N\in\R^N$, $N=1,2,\dots$, we let the entries of $\bv_N$ be $\bv_N=(v_1,v_2,\dots, v_N)$, omitting the dependence of each $v_i$ on $N$ for brevity. We let $\unit$ denote the unit interval with periodic boundary conditions (the unit torus). 
Next, let $m_1(\rho)$ denote the first moment of a probability mass function (pmf) $\rho$ on $\Z$, i.e. $m_1(\rho)=\sum_{n=-\infty}^\infty n\rho(n)$, and we write $$\rho(f) :=\sum_{n=-\infty}^\infty f(n)\rho(n)$$ to denote the expectation of the observable $f$ under $\rho$. We use the notation $$\{\rho[\lambda]\mid\lambda\in\R\}$$ to denote a family of pmfs on $\Z$, parameterized by $\lambda$; so that $\rho[\lambda](n)$ denotes the probability of $n$ under $\rho[\lambda]$, and $\rho[\lambda](f)$ denotes the expectation of $f$ under $\rho[\lambda]$. 
\subsection*{Acknowledgments} Thanks to Yuan Gao, Jian-Guo Liu, Jianfeng Lu, and Jeremy Marzuola, with whom the author discussed the possibility of generalizing the analysis of~\eqref{introPDE-gibbs} to that of the PDE~\ref{introPDE}. Thanks also to Jonathan Weare and Jeremy Marzuola for their guidance and insights throughout the last five years, in which this project came to fruition. Finally, thank you to NYU High Performance Computing for access to computing resources.  

\section{Preliminaries: Metropolis Rate Model}\label{sec:micro} In this section we will introduce the Metropolis rate crystal height process $\hN$ as well as two companion processes. We will then explain the role of the companion processes in deriving the PDE governing the hydrodynamic limit of $\hN$. Finally, we will briefly mention key features of the Arrhenius rate process studied in~\cite{kat21}. This process will repeatedly serve as a point of comparison to the Metropolis process. 
\subsection{Microscopic Dynamics}\label{subsec:micro} Let $\hN(t) = (h_1(t),\dots,  h_N(t))\in\Z^N$ be a Markov jump process, with $h_j(t)$ representing the discrete height at lattice site $j$ of the crystal, relative to some fixed arbitrarily chosen zero height level. Note that each $h_i$ depends on $N$ as well as $i$. The lattice is periodic, so that we identify $j$ with $j+mN$, $m\in\Z$. As we will soon see, the dynamics of the height process $\hN(t)$ will be determined entirely by the companion \emph{slope} process $\zN(t)= (z_1(t),\dots,  z_N(t))$, where
$$z_i(t) = h_{i+1}(t) - h_i(t).$$ We will use the letters $\bh$ and $\bz$, with no subscript, to denote an arbitrary height and slope configuration in $\Z^N$, respectively. The surface energy, or Hamiltonian, of a configuration $\bh$ is given by
 \beq\label{H} H(\bh) = \sum_{i=1}^N(h_{i+1}-h_i)^2 =\sum_{i=1}^Nz_i^2=H(\mbf z).\eeq  
Although the absolute value potential is the most physically relevant choice for modeling crystal surface energies, we choose a quadratic interaction potential because certain calculations can be done explicitly in this case. In the mathematical study of hydrodynamic limits of interfaces, it is standard to consider energies of the form $\sum_iV(|h_{i+1}-h_i|)$ for general convex $V$. See e.g.~\cite{Nishikawa,funaki1997motion}. We see that the energy of a height profile $\bh$ is actually a function of the corresponding slope profile $\bz$. 

The process $\hN(t)$ evolves through particle jumps between neighboring lattice sites which, on average, lower the surface energy. We represent a jump from lattice site $i$ to site $j$ by $\mbf h\mapsto \mbf h^{i,j}$, $|i-j|=1$. Here, $\mbf h^{i,j}$ is the height profile such that
\beq\label{hij}
(h^{i,j})_k = \begin{cases}
h_i-1,\quad &k=i,\\
h_j+1,\quad &k=j,\\
h_k,\quad&\text{otherwise.}
\end{cases}
\eeq
Now, suppose $\hN(t)=\bh$, so that $\zN(t)=\bz$, the corresponding slope profile. If $\hN(t)$ undergoes the transition $\bh\mapsto\bh^{i,j}$, then $\zN(t)$ undergoes the transition $\bz\mapsto\bz^{i,j}$, where $\bz^{i,j}$ is the slope profile corresponding to $\bh^{i,j}$. Explicitly, we compute 
\beqs\label{zij}
&\mbf z^{i,i+1} = \mbf z -{\bf d}^{(2)},\quad\mbf z^{i+1,i}=\bz+{\bf d}^{(2)},\\
&{\bf d}^{(2)}=\mbf e^{i-1} - 2\mbf e^i + \mbf e^{i+1}
\eeqs where $\mbf e^j$ denotes the $j$th unit vector. The jumps $\mbf h\mapsto \mbf h^{i,j}$ occur at certain \emph{rates} $r^{i,j}_N(\mbf h)$. The rates indicate the probability of a jump in time $dt$, as follows: suppose the process is in state $\bh$ at time $t$, and let $R_N(\bh)$ be the sum of the rates of all possible jumps from $\bh$. Then the probability that the jump $\bh\mapsto\bh^{i,j}$ occurs in the interval $(t,t+dt]$ is $(r^{i,j}_N(\mbf h)/R_N(\bh))dt$. 

In this paper, we will consider rates of the form $r^{i,j}_N(\bh)=N^4r^{i,j}(\mbf h)$. Here, $N^4$ is the appropriate time scaling to take a hydrodynamic limit, as we will explain in Section~\ref{sec:hydro}. The unscaled rates $r^{i,j}(\mbf h)$ only depend on the local configuration of heights, and not on $N$. 
Formally, the rates determine the dynamics of $\hN$ through the generator $\LN$:
\beq\label{gen-gen}\l(\LN f\r)(\mbf h) = N^4\sum_{|i-j|=1}r^{i,j}(\mbf h)\l[f(\mbf h^{i,j}) - f(\mbf h)\r].\eeq 
Consider applying $\LN$ to $f=\pi_i$, where $\pi_i(\mbf h) = h_i$. Note that $h_i$ decreases by 1 if a particle at $i$ jumps to $i\pm 1$, and $h_i$ increases by 1 if a particle at $i\pm1$ jumps to $i$. As a result,
\beqs\label{LN-pi-h}(\LN \pi_i)(\mbf h) &= N^4\l[\l(r^{i-1,i}-r^{i,i-1}\r)(\mbf h)-\l(r^{i,i+1}-r^{i+1,i}\r)\r](\mbf h)\\&=N^4(J^{i-1,i}-J^{i,i+1})(\mbf h),\eeqs where $J^{i,i+1}(\mbf h)= (r^{i,i+1}-r^{i+1,i})(\mbf h)$ is the \emph{current}: the net, expected amount of mass flowing from $i$ to $i+1$ per unit of unscaled time if the process is in state $\mbf h$. By definition of the generator, we then have
\beqs\label{evoln-h}
\partial_t\E[h_i(t)] = \E\l[(\LN\pi_i)(\hN(t))\r]= N^4\E\l[(J^{i-1,i}-J^{i,i+1})(\hN(t))\r]
\eeqs
This equation is valid regardless of the specific form of $r^{i,j}(\mbf h)$. It can be thought of as a microscopic continuity equation: the change in mass (height) is given by the divergence (finite difference) of a current. We now specify the ``Metropolis-type" rates considered in this paper:
\beq\label{rates-H}
r^{i,j}(\mbf h) = \e\l(-\frac K2\l[H(\mbf z^{i,j}) - H(\mbf h)\r]\r), \quad |i-j|=1.
\eeq Here, $K=1/(k_B T)$, where $k_B$ is the Boltzmann constant and $T$ is the ambient temperature, held constant over time. See Remark~\ref{rk:metrop} for an explanation of the name ``Metropolis". Using the formulas~\eqref{H} for the Hamiltonian,~\eqref{zij} for the transitions $\bz\mapsto\bz^{i,j}$ and~\eqref{rates-H} for the Metropolis rates, we obtain the following explicit expression for the rates:
\beqs\label{met-rates-general}
r^{i,i+1}(\mbf h) &=  r^{i,i+1}(\mbf z)=\e\l(-3K+K(z_{i+1}-2z_i+z_{i-1})\r),\\
r^{i+1,i}(\mbf h) &= r^{i+1,i}(\mbf z)= \e\l(-3K-K(z_{i+1}-2z_i+z_{i-1})\r).
\eeqs
Note that these rates depend on $\bh$ only through $\bz$; in fact, only through a further finite difference (FD). This implies that $\zN(t)$ is also a Markov jump process which can exist independently of a height process: it is the process which takes jumps $\bz\mapsto\bz^{i,j}$ with rates $N^4r^{i,j}(\bz)$. The form of the rates~\eqref{met-rates-general} motivates us to also introduce the third order FD process $\wN(t) = (w_1(t),\dots, w_N(t))$, with $$w_i(t) = (z_{i-1}-2 z_i + z_{i+1})(t) = (h_{i-1}-3h_{i}+3h_{i+1} - h_{i+2})(t).$$ We let $\bw$ denote a generic third order FD profile corresponding to the generic height profile $\bh$. The process $\wN(t)$ is also a Markov jump process which can be independently defined. It undergoes jumps $\bw\mapsto \bw^{i,i+1}$, $\bw\mapsto \bw^{i+1,i}$ with rate $N^4r^+(w_i)$ and $N^4r^-(w_i)$, respectively, where $r^\pm(w) = e^{-3K\pm Kw}$ and
\beqsn
&\mbf w^{i,i+1} = \mbf w -{\bf d}^{(4)},\quad\mbf w^{i+1,i}=\bw+{\bf d}^{(4)},\\
&{\bf d}^{(4)}=\mbf e^{i-2} - 4\mbf e^{i-1} + 6\mbf e^{i} - 4\mbf e^{i+1}+\mbf e^{i+2}.
\eeqsn
\subsection{Role of Height, Slope, and Third Order FD Process}\label{subsec:role}
Now that we have introduced the three processes $\hN$, $\zN$, and $\wN$, let us explain their roles in this paper. The original $\hN(t)$ is the physically meaningful process, and our main goal is to derive the PDE governing its hydrodynamic limit $h$. However, it will be more convenient to first study the hydrodynamic limit $w$ of $\wN$, and to deduce the PDE governing $h$ from the PDE governing $w$. As an indication of why this is more convenient, recall the evolution equation~\eqref{evoln-h} for $\E[h_i(t)]$. Using~\eqref{met-rates-general} and the definition of $w_i$, we can now write the current $J^{i,i+1}$ as $
J^{i,i+1}(\bh) = J(w_i),$ where \beq J(w)=(r^+-r^-)(w) = 2e^{-3K}\sinh(Kw).\eeq
Therefore, with the Metropolis rates~\eqref{met-rates-general}, the evolution equation~\eqref{evoln-h} takes the form 
\beq\label{evoln-h-II}
\partial_t\E[h_i(t)] = -N^4\E\l[J(w_{i}(t))-J(w_{i-1}(t))\r].
\eeq
Thus, the evolution of $\E[h_i]$ depends nonlinearly on the FDs $w_i$, $w_{i-1}$. In hydrodynamic limit derivations, such dependence on finite differences is inconvenient. But if we take the third order FD of both sides of~\eqref{evoln-h-II}, we get 
\beq\label{evoln-w}
\partial_t\E[w_i(t)] = -N^4\E\left[J(w_{i-2})-4J(w_{i-1}) +6J(w_i)- 4J(w_{i+1}) + J(w_{i+2})\right].\eeq
We see that the evolution of $\E[w_i]$ can be written in terms of $w_j$'s alone. 

Now, let us address the role of $\zN$.  Roughly speaking, the PDE governing the limit $w$ comes from replacing $\E[J(w_i)]$ in the righthand side of~\eqref{evoln-w} by $\hatJ(w(t, i/N))$ for some function $\hat J$. 
Showing such a replacement is possible and determining $\hat J$ will require us to have some knowledge of $\Law(\wN(t))$. Since the distribution $\Law(\zN(t))$ determines the distribution $\Law(\wN(t))$, we could study the former to understand the latter. As a helpful starting point, it turns out that $\zN$ has the special property that it is reversible with respect to the standard Gibbs measure $$\Phi_N(\mbf z)\propto \e(-KH(\mbf z)) = \e(-K\sum_{i=1}^N z_i^2),\quad \bz\in\Z^N.$$ This is a result of detailed balance, i.e. $$r^{i,i+1}(\mbf z)\Phi_N(\mbf z) = r^{i+1,i}(\mbf z^{i,i+1})\Phi_N(\mbf z^{i,i+1})$$ for all $i$, which is easy to see using the original formulation of the rates~\eqref{rates-H}. 
\begin{remark}\label{rk:metrop}
Any rates of the form $r^{i,j}(\bz)=\psi(\Delta H)$ which satisfy $\psi(-\Delta H) = \psi(\Delta H)e^{K\Delta H}$ are in detailed balance with the Gibbs measure $\Phi_N$. Another example of rates in this family is $\psi(\Delta H) = e^{-K\Delta H}\wedge1$, which is the acceptance probability in a Metropolis-Hastings scheme to sample from $\Phi_N$. This is where the name ``Metropolis'' comes from. 
\end{remark}
Reversibility of $\zN$ with respect to $\Phi_N$ suggests that for $N$ large, $\Law(\zN(t))$ is a \emph{local} Gibbs product measure of the form
\beq\label{gibbs-measure}
\mathbb P(\zN(t)=\mbf z) \approx \rhogibbsNd(\mbf z)\propto \e\l(-K\sum_{i=1}^Nz_i^2 + 2K\sum_{i=1}^N\lambda_iz_i\r)
\eeq
for some $\lambda_i=\lambda_i(t)$. There are deeper and more technical reasons why the local Gibbs measure typically arises, which we will not get into here. See e.g.~\cite{varadhanI,kipnisbook} for a rigorous probabilistic treatment of this topic and~\cite{spohnbook} for a more physical treatment. 


The paper~\cite{gao2020} assumed that $\Law(\zN(t))$ is a local Gibbs distribution to carry out the aforementioned replacement and to determine the function $\hat J$. However, the authors gave preliminary evidence that interestingly enough, the local Gibbs distribution is not accurate for all $K$. And indeed, we will show definitively in Section~\ref{subsec:gibbs} that $\Law(\zN(t))$ \emph{cannot} be approximated by a local Gibbs distribution as $N\to\infty$. Why the local Gibbs distribution is not the correct form of $\Law(\zN(t))$ is a very interesting question worthy of further investigation, but we do not pursue the question here. We will see that despite being incorrect, the local Gibbs approximation~\eqref{gibbs-measure} to $\Law(\zN(t))$ leads to a crude but numerically useful approximation to the true function $\hat J$, the computation of which is the main goal of this paper. Beyond this approximation, however, the $\zN$ process will play no role in our PDE derivation. 

\subsection{A Close Cousin: Arrhenius Rate Dynamics}\label{subsec:arr} Throughout the paper, it will often be helpful to compare the Metropolis rate process and its hydrodynamic limit to the Arrhenius rate process and its limit, which were studied in~\cite{kat21}. For the sake of a self-contained paper, let us review the key features of the Arrhenius process. We will let $\hN^\arr$ denote the height process, $\zN^\arr$ denote the first order FD (slope) process, and $\wN^\arr$ denote the \emph{second} order FD process. The Arrhenius rates $r_N^{i,j}$ can also be written $r_N^{i,j}(\bh)=N^4r^{i,j}(\bh)$, where $r^{i,j}(\bh)=r^{i,j}(\bz)$. They are symmetric with respect to jumping left and right, with $r^{i,i\pm1}(\bz)=r(z_i-z_{i-1})$ for  $r(w)=e^{-2K-2Kw}$.  For the physical interpretation of these rates, see~\cite{kat21} and the references therein. Like the Metropolis rates, the Arrhenius rates are reversible with respect to the Gibbs measure $\Phi_N(\bz)\propto\e(-KH(\bz))$. But unlike $\Law(\zN(t))$ for the Metropolis process, the distribution $\Law(\zN^\arr(t))$ \emph{does} converge to a local Gibbs measure as $N\to\infty$. This is the key difference between these two otherwise very similar processes. Another similarity is that the evolution of $\E[w_i^{\arr}(t)]$ takes the exact same form as the evolution~\eqref{evoln-w} of $\E[w_i(t)]$, except that the function $J$ is replaced with the function $r$. 

\section{Hydrodynamic Limit in the Rough Scaling Regime}\label{sec:hydro}
In this section, we define the hydrodynamic limit of a Markov jump process $\vN(t)\in\Z^N$ under a given scaling regime. We then specify the rough scaling regime for the Metropolis $\hN$ process, and motivate recasting the limit of $\hN$ in terms of the limit of $\wN$.  

Let $\vN(t)\in\Z^N$, $N=1,2,\dots$ be a sequence of Markov jump processes on the periodic lattice $\{1,2,\dots,N\}$ with transitions $\bv\mapsto\bv^{i,j}$ occurring at rates $$r_N^{i,j}(\bv)= N^\alpha r^{i,j}(\bv)$$ for some $\alpha$. In order for a hydrodynamic limit to exist, the rates and transition rules should satisfy certain conditions. We will content ourselves with taking $\vN$ to be one of $\hN$ or $\wN$, for which these conditions are satisfied. 

The hydrodynamic limit of $\vN$ arises by rescaling three characteristic scales: time, space, and ``amplitude". The $N^\alpha$ time rescaling has already been incorporated into the transition rates $r_N^{i,j}$. The spatial scaling occurs by identifying the $N$ lattice sites with points on the periodic unit interval (torus), denoted $\unit$. Specifically, we identify $\vN(t) = (v_1(t), \dots, v_N(t))$ with a random measure on the unit interval:
$$\vN(t)\quad\leftrightarrow \quad v_N(t,dx) = \frac1N\sum_{i=1}^Nv_i(t)\delta\l(x-\frac iN\r).$$ Another equivalent way to think of $\vN(t)$ is as a step function, with value $v_i(t)$ in the interval $[i/N, (i+1)/N)$. For the amplitude rescaling, we assume that the $v_i$ grow with $N$, so that to obtain a finite macroscopic limit, the $v_i$ must be scaled down. We will incorporate the amplitude rescaling into the following definition of a hydrodynamic limit:
\begin{definition}[Hydrodynamic Limit,~\cite{kat21}]\label{def:hydro} Suppose $\vN(0)$ is initialized in a random configuration for which there exists $v_0:\unit\to\R$ such that
\begin{align*}\label{init}\frac1N\sum_{i=1}^N\phi\l(\frac iN\r)(N^{-\beta}v_i(0)) \probto\int_\unit\phi(x)v_0(x)dx,\quad N\to\infty.\tag{init}\end{align*} 
We say $\vN$ converges hydrodynamically to $v:(0,T]\times\unit\to\R$ under amplitude scaling $N^{\beta}$ and implied time scaling $N^\alpha$ if for each $t\in(0,T]$, $\phi\in \cts$, we have
\beqs\label{hydro-lim}\frac1N\sum_{i=1}^N\phi\l(\frac iN\r)(N^{-\beta}v_i(t)) \probto\int_\unit\phi(x)v(t,x)dx,\quad N\to\infty.\eeqs \end{definition} Here, the notation $\probto$ denotes convergence in probability. The probability distribution of $\int \phi(x)N^{-\beta}v_N(t,dx)$ is induced by $\Law(\vN(t))=\Law(\vN(0))\e(\LN t)$, where $\LN$ is the generator of $\vN$. 

Note that the lefthand side of~\eqref{hydro-lim} equals $\int \phi(x)N^{-\beta}v_N(t,dx)$, so that~\eqref{hydro-lim} expresses that the random measure $N^{-\beta}v_N(t,dx)$ converges to the measure $v(t,x)dx$. 
\begin{definition}[Rough Scaling Regime]
Let $\hN$ be governed by the Metropolis dynamics specified in Section~\ref{subsec:micro}. We say $\hN$ converges to $h:[0,T]\times\unit\to\R$ in the \textbf{\emph{rough scaling regime}} if $\hN$ converges hydrodynamically to $h$ under amplitude scaling $N^{3}$ and implied time scaling $N^4$.\end{definition}
We will explain the choice $\alpha=4$ and $\beta=3$ below. 
As an example of a distribution on $\hN(0)$ satisfying~\eqref{init} with $\beta=3$, consider a product measure with marginals $$h_i\;\sim\;\lfloor N^3 h_0(i/N)\rfloor + \xi_i,$$ where $\lfloor q\rfloor$ denotes the integer part of $q$ and $\xi_i$, $i=1,\dots, N$ are i.i.d. integer-valued random variables with bounded support. In fact,~\eqref{init} is satisfied as long as $|\xi_i|<B_N$ where $B_N = o(N^3)$. To summarize the rough scaling regime in simple terms, start with an $O(1)$ height profile $h_i(0) = h_0(i/N)$. Then, multiply it by $N^3$ to get $\hN(0)$, and evolve it forward according to the time-rescaled Metropolis rate dynamics. To get a hydrodynamic limit, divide $\hN(t)$ by $N^{3}$ and take $N\to\infty$. It may seem like multiplying and dividing by $N^3$ should have no effect. But this is not so, because scaling has a nonlinear effect on the dynamics, so that different choices of $\beta$ for the amplitude scaling lead to different hydrodynamic limits. A straightforward way to see this is to note that the rates $r^{i,i+1}$ and $r^{i+1,i}$ are exponential in $w_i$. Scaling $w_i$ by a constant multiple will affect the rates nonlinearly, and thus have a nonlinear effect on the evolution of $h_i$. To understand the effect of different $\beta$ in more detail, suppose for the moment that we have $\E[h_{i}(t)]\approx N^{\beta}h(t,i/N) $ for $N\gg1$. (This of course does not follow from hydrodynamic convergence.) Using~\eqref{evoln-h-II}, we should then have
\beq\label{ab-scaling}\partial_th(t,i/N) \approx -N^{4-\beta}\left(\E J\left(w_{i}\right) - \E J\left(w_{i-1}\right)\right).\eeq Now, if $h_i$ has order $N^\beta$ then $w_i$ should have order $N^{\beta-3}$, since it is the third order FD of $h_i$ (this statement is purely formal; see below). If $\beta<3$, then we expect that in the $N\to\infty$ limit, the nonlinear function $J$ will become linearized around $0$. Taking $\beta=3$ as in the rough scaling limit,  the nonlinear function $J$ is in some sense ``preserved" as $N\to\infty$, leading to a very different PDE. The reason for the $N^{4}$ time scaling is that it ensures that the total power of $N$ is $4-3=1$ in~\eqref{ab-scaling}, which balances the order of the finite difference $J(w_{i}) - J(w_{i-1})$. In sum, the rough scaling regime is the unique choice of $\alpha,\beta$ which leads to a nontrivial and non-exploding limit $h$ governed by a PDE which ``preserves" the nonlinear function $J$ (we use quotation marks because the PDE will involve not $J$ but a related $\hat J$, also with exponential nonlinearity). Note that this choice is tailored to the Metropolis dynamics. For the Arrhenius dynamics, for example, $\alpha=4,\beta=2$ gives the PDE with exponential nonlinearity.  

Of course, if $h_i=O(N^3)$ then in general we cannot infer $w_i=O(1)$, since taking finite differences is unstable. This motivates us to take $\wN$ as our ``original" process and study its hydrodynamic limit under $N^0$ amplitude scaling. We then expect to obtain the hydrodynamic limit of $\hN$ under $N^3$ amplitude scaling by doing three cumulative sum operations. We will carry out this program formally in the next section. In particular, we will see that the function $\hat J$ of the macroscopic current $\hat J(h_{xxx})$ in the $h$ PDE is intrinsically linked to the $\wN$ process. 

\section{PDE for $h$ via Third Order Finite Differences}\label{sec:theory}
This section explains our approach to deriving the PDE governing the hydrodynamic limit of $\hN$ in the rough scaling regime, via the hydrodynamic limit of $\wN$. We begin the section with an overview of this approach. First, we will show that
\beq\label{w-to-h-logic}
\wN\stackrel{\substack{\text{ptwise}\\\text{meso}}}{\to} w\quad\Rightarrow\quad\wN\stackrel{\text{hydro}}{\to} w\quad\Rightarrow\quad \hN\stackrel{\text{hydro}}{\to} h.
\eeq
The rightmost limit denotes hydrodynamic convergence of $\hN$ in the rough scaling regime: amplitude scaling $N^3$, time scaling $N^4$. We will show this follows from the middle limit: hydrodynamic convergence of $\wN$ under amplitude scaling $N^0$ and time scaling $N^4$. The limiting function $h$ will be uniquely determined from the function $w$, the initial macroscopic condition $h_0$, and the periodic boundary. The leftmost limit denotes ``pointwise mesoscopic" convergence of $\wN$, which is nonstandard but physically intuitive, and was used in our study of rough local equilibria in~\cite{kat21}. We will show that pointwise mesoscopic convergence implies hydrodynamic convergence.

Next, consider the following key approximation:
\beq\label{Ef-intro} \frac{1}{2N\epsilon}\sum_{i\in\idxsetx}\E J(w_i(t)) \approx \hat J\l(\frac{1}{2N\epsilon}\sum_{i\in\idxsetx} \E w_i(t)\r),\quad N\gg1, \epsilon\ll1.\eeq
The existence of $\hat J$ such that~\eqref{Ef-intro} holds is a property of locally equilibrated processes, as we will explain in Section~\ref{subsec:LE}. It is important to note that~\eqref{Ef-intro} is \emph{not} an assumption. In rigorous hydrodynamic limit arguments, proving the so-called ``Replacement Lemma", which is analogous to~\eqref{Ef-intro}, is typically the central and most difficult part (for more on this, see the discussion and references in~\cite{kat21}). We will show numerically that~\eqref{Ef-intro} is satisfied for the Metropolis process. The equation is the key ingredient to derive the PDE since, as we will show in Claim~\ref{claim:PDE},
\beqs\label{EplusEfimplyPDE}
\bigg(\wN\stackrel{\substack{\text{ptwise}\\\text{meso}}}{\to} w\bigg)\quad &+\quad \bigg(\exists\,\hat J\text{ s.t. \eqref{Ef-intro} holds }\forall\;t,x\bigg)\\
&\Rightarrow\quad w\text{ solves }\partial_tw = -\partial_{xxxx}\hatJ(w)\text{ weakly.}
\eeqs
From here, we will be able to conclude that $h$, the hydrodynamic limit of $\hN$ in the rough scaling regime, is the weak solution to the PDE $$\partial_th= -\partial_{x}\hatJ(h_{xxx}).$$

Thus, if we can verify the conditions in~\eqref{EplusEfimplyPDE} --- that $\wN$ converges pointwise mesoscopically to $w$, and a function $\hat J$ exists such that~\eqref{Ef-intro} holds for all $t,x$ --- then the chain of logic just described will lead us to the PDE for $h$, our original goal. More specifically, this logic establishes the \emph{form} of the PDE, but it remains to compute $\hat J$. Doing so numerically will be the focus of Section~\ref{sec:num}. 

The assertions~\eqref{w-to-h-logic} and~\eqref{EplusEfimplyPDE} will be formalized in Section~\ref{subsec:theory} and proved rigorously in Appendix~\ref{app:claims}. The rigorous proofs rely on the following supplementary boundedness assumptions:
\begin{align*}\label{bd1}\sup_{N}\sup_{i=1,\dots, N}\E|w_i(t)| < \infty\quad\forall t\geq0,\tag{w-bd}\end{align*}
\begin{align*}\label{bd2}
\sup_{N}\sup_{s\in(0,t]}\sup_{i=1,\dots,N}|\E J(w_i(s))|<\infty\quad\forall t\geq0.\tag{J-bd}\end{align*}
 These assumptions are extremely strong (most likely unnecessarily so), but our primary aim in presenting the proofs is to put our numerical method on firm footing. We will numerically check both the boundedness assumptions and the two conditions of~\eqref{EplusEfimplyPDE} in Section~\ref{subsec:theory-numeric}. 

Later in Section~\ref{sec:num}, we will verify numerically that the end goal has been achieved: that $\hN$ does in fact converge to the solution of the PDE we obtain. Given this, the numerical and theoretical verifications of this section may seem unnecessary. Their purpose is to confirm that we have obtained the correct PDE \emph{for the correct reason}. This is important because hydrodynamic limit derivations can be delicate. For example, in~\cite{mw-krug} the authors used heuristic arguments to derive the PDE limit of the Arrhenius dynamics in the rough scaling regime. They confirmed numerically that the PDE they obtained is correct. However, we show numerically in~\cite{kat21} that some of the assumptions in~\cite{mw-krug} were incorrect, which  obscured the true reason the PDE takes the form it does (see the discussion in Section 6.4 of~\cite{kat21}).

\subsection{From $w$ to $h$: Limit and PDE}\label{subsec:theory}

We start by showing that the hydrodynamic limit of $\hN$ is determined from the hydrodynamic limit of $\wN$. 
The following claim corresponds to the second (righthand) implication in~\eqref{w-to-h-logic}. 
\begin{claim}\label{claim:htow}
Let $\hN(t)$, $N=1,2,\dots$ be a sequence of Metropolis rate height processes, and $\wN(t)$ be the corresponding third order FD processes. Suppose $$N^{-1}\sum_{i=0}h_i(0)\probto M,\quad N\to\infty,$$ for some deterministic $M$ and that~\eqref{init} is satisfied for $\wN$ with $\beta=0$ and some $v_0=w_0$. If $\wN$ converges hydrodynamically to a function $w$ under amplitude scaling $N^0$, and if~\eqref{bd1} holds, then $\hN$ converges hydrodynamically under amplitude scaling $N^3$. The limit $h$ is the unique periodic function such that $\int_0^1h(t,x)dx = M$, $h_{xxx}=w$, and such that $h_x$, $h_{xx}$ are also periodic. 
\end{claim}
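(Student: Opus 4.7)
The plan is to verify hydrodynamic convergence of $\hN$ by testing against smooth periodic $\phi \in C^\infty(\unit)$ (dense in $\cts$), decomposing $\phi = \bar\phi + \tilde\phi$ where $\bar\phi := \int_\unit \phi\,dx$ and $\tilde\phi$ is mean-zero, and handling these pieces separately. The mean part is easy: every Metropolis jump preserves total mass, so $\sum_i h_i(t)$ is conserved and the initial-condition hypothesis (in the natural $N^{-4}$ normalization consistent with the $N^3$ amplitude scaling) propagates to all times $t>0$. This gives $\frac{1}{N^4}\sum_i\bar\phi\,h_i(t) \probto \bar\phi M = \int_\unit\bar\phi\,h\,dx$.

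For the mean-zero part, since $\tilde\phi$ is smooth and periodic with $\int\tilde\phi = 0$, it admits a smooth periodic triple antiderivative $\Psi$ with $\Psi''' = \tilde\phi$. The companion variable $w_i$ is a discrete third-order difference of $(h_i)$, so summing by parts three times on the periodic lattice (no boundary terms),
\beqn
\sum_i \Psi(i/N)\,w_i \;=\; \sum_i h_i\,(D^3\Psi)_i,
\eeqn
where $(D^3\Psi)_i$ is the dual discrete third-order difference. A fourth-order Taylor expansion yields $(D^3\Psi)_i = -\Psi'''(i/N)/N^3 + R_i$ with $|R_i|\le C\|\Psi^{(4)}\|_\infty/N^4$. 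Rearranging,
\beqn
\frac{1}{N^4}\sum_i \tilde\phi(i/N)\,h_i(t) \;=\; -\frac{1}{N}\sum_i \Psi(i/N)\,w_i(t) + E_N(t), \qquad |E_N(t)| \le \frac{C_\Psi}{N^5}\sum_i|h_i(t)|.
\eeqn
By hydrodynamic convergence of $\wN$ applied with test function $\Psi$, the leading term on the right converges in probability to $-\int_\unit\Psi\,w\,dx$. Three continuum integrations by parts using periodicity of $h, h_x, h_{xx}$ and the relation $h_{xxx} = w$ show $-\int\Psi\,w\,dx = \int_\unit\tilde\phi\,h\,dx$, matching the target.

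The main obstacle is controlling $E_N(t)$, which requires $\E\sum_i|h_i(t)| = O(N^4)$ even though each $|h_i|$ may be of order $N^3$. This is exactly where \eqref{bd1} enters essentially. Traversing the chain $w \to u \to z \to h$ with $u_i = z_{i+1}-z_i$ and $z_i = h_{i+1}-h_i$, each level is a first antisum of the next on a length-$N$ periodic lattice, with constants of integration fixed by demanding periodicity at each stage; since antisumming a mean-zero sequence bounded in $L^\infty$ by $C$ yields a sequence bounded by $CN$, iteration gives $\E|w_i|\le C \Rightarrow \E|u_i|\le CN \Rightarrow \E|z_i|\le CN^2 \Rightarrow \E|h_i|\le CN^3 + |M|$ (the last summand absorbing the conserved mass). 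Hence $\E|E_N(t)|\le C/N\to 0$, and $E_N\probto 0$.

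Existence and uniqueness of $h$ reduce to a standard ODE exercise: $\sum_iw_i \equiv 0$ on the periodic lattice forces $\int_\unit w\,dx = 0$ in the limit; successive integration, with two constants of integration fixed by demanding periodicity of $h_x$ and $h$, and the third fixed by $\int_\unit h\,dx = M$, produces the unique $h$ with $h_{xxx}=w$ and the required regularity.
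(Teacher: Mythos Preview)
Your approach is essentially the same as the paper's: split $\phi$ into its mean and mean-zero parts, use mass conservation for the former, and for the latter take a periodic triple antiderivative $\Psi$, sum by parts three times, and reduce to the hydrodynamic convergence of $\wN$ tested against $\Psi$. The paper packages the control of the remainder into a separate lemma showing $N^{-4}\sum_i|h_i(t)|$ is bounded in probability, derived by writing $h_i$ as a triple cumulative sum of the $w_\ell$ plus a quadratic $a_Ni^2+b_Ni+c_N$ and bounding each coefficient.

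There is one genuine slip in your remainder control. From \eqref{bd1} you correctly get $\E|u_i|\le CN$ and $\E|z_i|\le CN^2$, but the final implication ``$\E|h_i|\le CN^3+|M|$'' does not follow from the hypotheses: the constant of integration at the $z\to h$ step is determined by $\sum_i h_i(0)$, and you only know $N^{-4}\sum_i h_i(0)\probto M$, which gives no $L^1$ bound on the total mass (and the $+|M|$ is off by a factor $N^3$ anyway). The fix is exactly what the paper does: conclude that $N^{-4}\sum_i|h_i(t)|$ is bounded \emph{in probability} (the mass term contributes $N^{-4}|\sum_ih_i(0)|\probto|M|$), which is all you need since $|E_N(t)|\le C_\Psi N^{-1}\cdot\bigl(N^{-4}\sum_i|h_i(t)|\bigr)\probto 0$. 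The same bounded-in-probability statement is also what justifies the density argument extending from $C^\infty(\unit)$ to $C(\unit)$.
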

For the proof of the claim see Appendix~\ref{app:claims}. 
The claim implies in particular that~\eqref{init} is satisfied for $\hN$ with $\beta=3$, where $h_0$ is uniquely determined from the function $w_0$, the constant $M$, and the periodic boundary.

We will now recall from~\cite{kat21} the notion of pointwise mesoscopic convergence, which will be very convenient to study from a numerical perspective. We begin with some notation. For a vector $\bf w$ and a function $f:\R\to\R$, define
\beqs\label{wfbar}
\varbar w = \frac{1}{2N\epsilon}\sum_{i\in\idxsetx} w_i,\qquad \fxnbar f w= \frac{1}{2N\epsilon}\sum_{i\in\idxsetx}f(w_i).\eeqs 
Here, $i\in N(x\pm\epsilon)$ denotes $i\in 1,2,\dots, N$ such that $|(i/N-x)\bmod1|\leq\epsilon$. 
\begin{remark}\label{rk:w-meas} Let $(\phi\ast\mu)(x)=\int_\unit\phi(x-y)\mu(dy)$ for a function $\phi:\unit\to\R$ and a signed measure $\mu$ on $\unit$. Using the interpretation of $\wN(t)=\wN(t,dx)$ as a signed, random measure, note that $\varbar w(t)$ can be written in the following way: 
\beq
\varbar w(t) = (\phi_\epsilon\ast\wN(t))(x),\quad\text{where}\quad \phi_\epsilon(x)=\mathbbm{1}_{(-\epsilon,\epsilon)}(x)/2\epsilon.
\eeq
\end{remark}
\begin{definition}[Pointwise Mesoscopic Convergence] We say $\wN$ converges pointwise mesoscopically if there exists a continuous function $w$ such that
\beq\label{convergence}\limNeps\E\bigg|\varbar w(t) - w(t,x)\bigg|^2=0,\quad\forall t\geq0,x\in\unit.\eeq 
\end{definition}
The reason we think of~\eqref{convergence} as a ``pointwise" convergence is that it holds for each $x$, and the \emph{limit} is the pointwise quantity $w(t,x)$. However, $\varbar w(t)$ is not itself a ``pointwise" quantity, but rather an average over the \emph{mesoscopic}, or \emph{local} set $\idxsetx$: for each fixed $\epsilon\ll1$, this set contains a number of microscopic lattice sites that grows to infinity with $N$. At the same time, it corresponds to the small macroscopic interval $(x-\epsilon,x+\epsilon)$.
A practical reason to consider pointwise mesoscopic convergence is that it connects with our numerical method to compute $\hat J$, which uses the local quantities $\varbar w$ and $\fxnbar J w$. As such, it will be more straightforward to prove that $w$ solves $w_t=-\partial_{xxxx}\hat J(w)$ if we know that $w$ is the pointwise mesoscopic, rather than hydrodynamic, limit of $\wN$.  Moreover, as noted in~\cite{kat21}, it is convenient that convergence in $L^2$ (with respect to randomness) can be separated into convergence of expectations and vanishing variance. Namely,~\eqref{convergence} is equivalent to 

\begin{align*}
\label{V}&\lim_{\epsilon\to0}\lim_{N\to\infty}\mathrm{Var}\left(\varbar w(t)\right)= 0\quad\forall x\in\unit,\tag{V}\\ 
\label{E}&\text{There exists a continuous $w(t,\cdot):\unit\to\R$ such that }\tag{E}\\
&\lim_{\epsilon\to0}\lim_{N\to\infty}\E\varbar w(t)= w(t,x), \quad \forall x\in\unit,\\ \\
\end{align*}
for all $t\geq0$.
We now formalize the first (lefthand) implication in~\eqref{w-to-h-logic}.
\begin{claim}\label{claim:meso}
Assume that
$\wN$ converges pointwise mesoscopically to $w$, and that~\eqref{bd1} holds. Then
\beq\int \phi(x)w_N(t,dx) \probto\int_\unit\phi(x)w(t,x)dx,\quad N\to\infty\eeq for all $\phi\in\cts$ and $t\geq0$. In other words, $\wN$ converges hydrodynamically to $w$ under amplitude scaling $N^0$. 
\end{claim}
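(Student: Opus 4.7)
The plan is to compare $\int\phi\,dw_N$ to a mesoscopic mollification of $w_N$, using the convolution identity in Remark~\ref{rk:w-meas} that $\varbar w(t) = (\phi_\epsilon\ast w_N(t))(x)$ with $\phi_\epsilon=\mathbbm{1}_{(-\epsilon,\epsilon)}/(2\epsilon)$. For fixed $\epsilon>0$, Fubini gives
\beqn
\int_\unit\phi(x)\,\varbar w(t)\,dx \;=\; \int_\unit (\phi_\epsilon\ast\phi)(y)\,w_N(t,dy),
\eeqn
which lets me decompose the error as
\beqn
\int\phi\,dw_N - \int\phi\,w\,dx \;=\; \int(\phi-\phi_\epsilon\ast\phi)\,dw_N \;+\; \int_\unit\phi(x)\bigl(\varbar w(t)-w(t,x)\bigr)\,dx.
\eeqn

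The first piece is a ``regularity'' error capturing how much $\phi$ varies on the mesoscopic scale $\epsilon$. Using~\eqref{bd1}, its expectation is at most $\|\phi-\phi_\epsilon\ast\phi\|_\infty\cdot N^{-1}\sum_i\E|w_i(t)| \le C\|\phi-\phi_\epsilon\ast\phi\|_\infty$, \emph{uniformly in $N$}. Uniform continuity of $\phi$ on the compact torus forces $\|\phi-\phi_\epsilon\ast\phi\|_\infty\to 0$ as $\epsilon\downarrow 0$. For the second piece, Jensen gives $\E|\varbar w(t)-w(t,x)|\le\sqrt{\E|\varbar w(t)-w(t,x)|^2}$, so by~\eqref{convergence} this integrand vanishes pointwise in $x$ under the iterated limit $\lim_\epsilon\lim_N$. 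Meanwhile, it is uniformly bounded in $N,\epsilon,x$ by $\sup_i\E|w_i(t)|+\|w(t,\cdot)\|_\infty<\infty$ (finite by~\eqref{bd1} and continuity of $w$). Two applications of dominated convergence in $x$ (first $\lim_N$, then $\lim_\epsilon$) yield $\lim_{\epsilon\to 0}\limsup_N\E|\text{second piece}|=0$.

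To conclude, given $\delta,\eta>0$ I will fix $\epsilon$ small enough that both expectation bounds, divided by $\delta$, drop below $\eta/2$; then Markov's inequality and a union bound yield $\limsup_N\p(|\int\phi\,dw_N-\int\phi\,w\,dx|>\delta)<\eta$, giving convergence in probability. The main (mild) obstacle is justifying the interchange of $\lim_\epsilon$, $\lim_N$, and $\int dx$ in the second-piece estimate: one needs the iterated $L^2$ pointwise-mesoscopic hypothesis, combined with the $L^1$-type uniform bound~\eqref{bd1}, to dominate the integrand so that dominated convergence applies. Indeed,~\eqref{bd1} is the essential hypothesis throughout --- without it the first piece could not be controlled uniformly in $N$ either --- and the rest is a standard mollification-plus-Markov computation.
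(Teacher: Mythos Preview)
Your proof is correct and follows essentially the same approach as the paper: the same mollification decomposition via the identity $\int\phi\,\varbar w\,dx=\int(\phi_\epsilon\ast\phi)\,dw_N$, the same uniform-continuity bound on the first piece using~\eqref{bd1}, and the same dominated-convergence argument on the second piece using pointwise mesoscopic convergence and~\eqref{bd1}. The paper concludes $L^1$ convergence directly (which implies convergence in probability), whereas you make the final step to probability explicit via Markov's inequality; this is a cosmetic difference only.
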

The proof is given in Appendix~\ref{app:claims}. Now that we have discussed convergence of $\wN$, we turn to the problem of deriving the PDE governing its limit $w$. To do so, we will exploit the following crucial property of any process with a ``rough" local equilibrium state (defined in Section~\ref{subsec:LE}):
\begin{align*}\label{Ef}&\text{For all ``suitable" }f,\text{ there exists continuous }\hat f\text{ such that  }\tag{Ef}\\ &\E\bar {f}({\bf w}_{\idxsetx }(t))\stackrel{N,\epsilon}{\approx}\hat f\left(\E\varbar w(t)\right), \quad \forall x\in\unit.\end{align*} We use the notation $A\stackrel{N,\epsilon}{\approx}B$ to mean that $|A-B|$ converges to zero as $N\to\infty$ and then $\epsilon\to0$. Note that if $\eE$ and $\eEf$ both hold then 
\beq\label{eEfeE}
\limNeps\E\bar {f}({\bf w}_{\idxsetx }(t)) = \hat f(w(t,x)).
\eeq
The ``suitable" functions $f$ are discussed in Section~\ref{subsec:LE}. For the Metropolis process, we confirm $\eEf$ for both $f(w)=w^2$ and $f(w)=J(w)$ in Section~\ref{subsec:theory-numeric}. However, to derive the Metropolis PDE we will only use that $\eEf$ holds for $f=J$. The following claim formalizes the assertion~\eqref{EplusEfimplyPDE} in the introduction to this section. 
\begin{claim}\label{claim:PDE}
Suppose $\wN$ converges pointwise mesoscopically to $w$, and that $\eEf$ holds with $f=J$ for all $t>0$. Also, assume the bounds~\eqref{bd1} and~\eqref{bd2}. Then $w$ is a weak solution to
\beq\label{w-PDE}
\begin{cases}
\partial_tw(t,x) &= -\partial_{xxxx}\hatJ(w),\quad t>0,x\in \unit,\\
w(0,x)&=w_0(x),\quad x\in\unit\end{cases}\eeq
in the sense that \beq\label{weak-def}\int_0^1\psi(x)\l[w(t,x)-w_0(x)\r]dx = -\int_0^t\int_0^1\psi^{(4)}(x)\hatJ(w(s,x))dxds,\eeq $\forall t>0,\psi\in C^4(\unit)$. \end{claim}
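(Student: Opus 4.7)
The strategy is to test the integrated microscopic evolution \eqref{evoln-w} against an arbitrary $\psi\in C^4(\unit)$ and pass to the iterated limit $N\to\infty$, then $\epsilon\to 0$, where $\epsilon$ is the mesoscopic window size. Integrating \eqref{evoln-w} over $s\in[0,t]$, multiplying by $\psi(i/N)/N$, summing over $i$, and applying discrete summation-by-parts on the periodic lattice (the symmetric $(1,-4,6,-4,1)$ stencil transfers onto $\psi$) yields
\[
\frac{1}{N}\sum_{i=1}^N \psi(i/N)\bigl[\E w_i(t)-\E w_i(0)\bigr] = -\int_0^t \frac{1}{N}\sum_{i=1}^N \bigl(N^4\Delta^{(4)}\psi\bigr)(i/N)\,\E J(w_i(s))\,ds,
\]
where $\Delta^{(4)}\psi$ denotes the 5-point centered difference of $\psi$. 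A Taylor expansion gives $N^4\Delta^{(4)}\psi = \psi^{(4)} + O(N^{-2})$ uniformly on $\unit$, so by \eqref{bd2} the $O(N^{-2})$ contribution is $o(1)$ as $N\to\infty$.

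Both sides are then handled by a mesoscopic window-averaging argument. Partition $\unit$ into intervals of width $2\epsilon$ centered at points $x_k$, and exploit uniform continuity of $\psi$ and $\psi^{(4)}$ to replace the test function inside each block by its value at $x_k$. The hypotheses \eqref{bd1} and \eqref{bd2} guarantee that the total residual from this replacement is $o(1)$ as $\epsilon\to 0$, uniformly in $N$. Up to the weight $2\epsilon$, the surviving block sums are the mesoscopic expectations $\E\varbar w(t)$ on the LHS and $\E\bar J(\bw_\idxsetx(s))$ on the RHS, each evaluated at $x=x_k$. By hypothesis \eqref{E}, the LHS block sums converge to $w(t,x_k)$ under $\limNeps$; by hypothesis \eqref{Ef} with $f=J$ combined with the identity \eqref{eEfeE}, the RHS block sums converge to $\hat J(w(s,x_k))$. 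Continuity of $w(t,\cdot)$ and of $\hat J\circ w(s,\cdot)$ then reassembles the sums over $k$ into the Riemann integrals $\int_\unit \psi(x)w(t,x)\,dx$ and $\int_\unit \psi^{(4)}(x)\hat J(w(s,x))\,dx$ as $\epsilon\to 0$. The $\E w_i(0)$ term is treated analogously using \eqref{init}.

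The final step is to interchange the iterated limit $\limNeps$ with the outer $\int_0^t ds$ integral on the right. This is justified by dominated convergence: \eqref{bd2} supplies a uniform-in-$(s,i,N)$ bound on $|\E J(w_i(s))|$, giving a bounded dominating integrand. Assembling the pieces produces the weak identity \eqref{weak-def} for all $\psi\in C^4(\unit)$ and $t>0$. The principal technical obstacle lies in the previous paragraph: one must show that the block-approximation error, the \eqref{Ef}-replacement error, and the Riemann-sum error all vanish in the prescribed order ($N\to\infty$ first, $\epsilon\to 0$ second), uniformly enough in $s$ to survive the time integral. Uniformity in $x$ is automatic because $\hat J$ is a single continuous function on $\R$, and \eqref{bd1}, \eqref{bd2} provide the remaining uniform control.
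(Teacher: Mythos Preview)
Your proof is correct and follows essentially the same route as the paper: test \eqref{evoln-w} against $\psi$, transfer the fourth-order stencil onto $\psi$ by periodic summation by parts, replace the discrete difference by $\psi^{(4)}$, introduce a mesoscopic scale $\epsilon$, invoke \eqref{Ef} together with \eqref{eEfeE}, and close with dominated convergence via \eqref{bd2}. The only cosmetic difference is that the paper implements the mesoscopic step by convolving with the box kernel $\phi_\epsilon(x)=\tfrac{1}{2\epsilon}\mathbbm 1_{(-\epsilon,\epsilon)}$ and shuttling it between $\psi$ and the empirical measure via the identity $\int\psi\,(\phi_\epsilon*\mu)\,dx=\int(\psi*\phi_\epsilon)\,d\mu$, rather than your explicit block partition; this keeps a genuine $dx$-integral at every stage and makes the final dominated-convergence step marginally cleaner.
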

\begin{proof} First, we substitute $w(t,x)-w_0(x)$ on the lefthand side of~\eqref{weak-def} by the limit of $\E\l[\varbar w(t)-\varbar w(0)\r]$. Thanks to~\eqref{bd1}, we can pull the limit outside of the integral. Thus, the lefthand side is the limit of $\int_\unit\psi(x)\E\l[\varbar w(t)-\varbar w(0)\r]dx$ as $\Nepslim$. Now, as in Remark~\ref{rk:w-meas}, note that we can write $$\E\varbar w(t)=(\phi_\epsilon\ast\E[\wN(t)])(x),$$ where $\E[\wN(t)]$ is the signed measure which assigns weight $\E[w_i(t)]$ to $x=i/N$.  We can then use the identity $\int_\unit \psi(x)(\phi\ast\mu)(x)dx = \int_\unit (\psi\ast\phi)(x)\mu(dx)$ which holds for even functions $\phi$. Thus, we get that
\beqs\label{PDE-derivation-i}
\int_\unit\psi(x)\E&\l[\varbar w(t)-\varbar w(0)\r]dx \\&= \int_\unit(\psi\ast\phi_\epsilon)(x)\E\l[ w_N(t,dx)-w_N(0,dx)\r]\\&= \frac1N\sum_{i=1}^N(\psi\ast\phi_\epsilon)\l(\frac iN\r)\E[w_i(t) - w_i(0)]\\
&= -\frac1N\sum_{i=1}^N(\psi\ast\phi_\epsilon)\l(\frac iN\r) \int_0^t\E[N^4D_N^4J(w_i(s))]ds,
\eeqs
where $D_N^4J(w_i) = J(w_{i-2})-4J(w_{i-1}) +6J(w_i)- 4J(w_{i+1}) + J(w_{i+2})$. We can now move $N^4D_N^4$ onto $(\psi\ast\phi_\epsilon)(i/N)$. For $N\gg1$, the result is approximately $(\psi^{(4)}\ast\phi_\epsilon)(i/N)$. Then we move $\phi_\epsilon$ back onto $\E[J(w_i(s))]$, to arrive at
$$\int_\unit\psi(x)\E\l[\varbar w(t)-\varbar w(0)\r]dx \approx -\int_\unit\psi^{(4)}(x)\int_0^T\E\bar J(\bw_\idxsetx(s))dsdx.$$ We now apply~\eqref{eEfeE} with $f=J$, and the bound~\eqref{bd2}, to conclude by Dominated Convergence. The details of the proof are filled in in Appendix~\ref{app:claims}.
\end{proof}

Finally, we return to our original goal to derive the PDE governing the rough scaling limit of $\hN$.
\begin{corollary}
Let $\hN(t)$ be a Metropolis rate process such that $N^{-1}\sum_{i=1}^Nh_i(0)$ converges to some constant $M$ in probability, and assume the conditions of Claim~\ref{claim:PDE}. Then $\hN$ has a hydrodynamic limit $h$ in the rough scaling regime which is three times continuously differentiable in $x$, and which is the weak solution to 
\beq\label{h-PDE}
\begin{cases}
\partial_th(t,x) &= -\partial_{x}\hatJ(h_{xxx}),\quad t>0,x\in \unit,\\
h(0,x)&=h_0(x),\quad x\in\unit\end{cases}\eeq
in the sense that
\beq\label{weak-def-ii}\int_0^1\psi(x)\l[h(t,x)-h_0(x)\r]dx = \int_0^t\int_0^1\psi'(x)\hatJ(h_{xxx}(s,x))dxds\eeq 
for all $t>0$ and $\psi\in C^1(\unit)$. 
\end{corollary}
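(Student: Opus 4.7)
The plan is to chain Claims~\ref{claim:meso},~\ref{claim:htow}, and~\ref{claim:PDE} to produce the hydrodynamic limit $h$ together with a weak PDE for $w=h_{xxx}$, and then to translate this weak PDE into the stated weak PDE for $h$ by integration by parts. The chaining handles existence and regularity almost automatically; the only genuine calculation is the translation step.

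First I would verify that the hypotheses of Claim~\ref{claim:htow} hold. Pointwise mesoscopic convergence of $\wN$ to $w$ and the bound~\eqref{bd1} are already among the hypotheses of Claim~\ref{claim:PDE} (and hence of the corollary), and Claim~\ref{claim:meso} upgrades pointwise mesoscopic convergence to hydrodynamic convergence of $\wN$ under amplitude scaling $N^0$. The condition~\eqref{init} for $\wN$ with $\beta=0$ follows from applying Claim~\ref{claim:meso} at $t=0$, and the required initial convergence $N^{-1}\sum_ih_i(0)\probto M$ is explicitly assumed. Claim~\ref{claim:htow} then yields hydrodynamic convergence of $\hN$ in the rough scaling regime to the unique periodic $h$ with $\int_\unit h(t,\cdot)\,dx=M$, $h_{xxx}=w$, and $h_x,h_{xx}$ periodic; continuity of $w$ gives $h\in C^3$ in $x$. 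Meanwhile Claim~\ref{claim:PDE} tells us $w$ solves~\eqref{w-PDE} weakly in the sense of~\eqref{weak-def}.

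The translation step proceeds as follows. Fix $\psi\in C^1(\unit)$ and decompose $\psi=c+\psi_0$ with $c=\int_\unit\psi(y)\,dy$ and $\psi_0$ mean zero. For the constant piece, both sides of~\eqref{weak-def-ii} vanish: the left by conservation of mass $\int_\unit h(t,x)\,dx=M=\int_\unit h_0(x)\,dx$, and the right because $c'\equiv 0$. So it suffices to treat $\psi$ mean zero, in which case successive antiderivatives can be chosen mean zero and hence periodic, yielding $\tilde\psi\in C^4(\unit)$ periodic with $\tilde\psi^{(3)}=\psi$ and $\tilde\psi^{(4)}=\psi'$. Insert $\tilde\psi$ into~\eqref{weak-def}, and on the left integrate by parts three times using $w=h_{xxx}$ together with periodicity of $h,h_x,h_{xx}$ and $\tilde\psi,\tilde\psi',\tilde\psi''$; this produces a sign $(-1)^3=-1$ and gives
\[
-\int_\unit\psi(x)[h(t,x)-h_0(x)]\,dx = -\int_0^t\int_\unit\psi'(x)\hat J(h_{xxx}(s,x))\,dx\,ds.
\]
Multiplying through by $-1$ yields~\eqref{weak-def-ii}.

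The only mild obstacle is the constant part of the test function $\psi$, which is out of reach of~\eqref{weak-def} (since constants kill the fourth derivative there); it is handled by the mass-conservation identity $\int_\unit h(t,\cdot)\,dx=M$ built into Claim~\ref{claim:htow}. Everything else is routine periodic integration by parts, licensed by the $C^3$ regularity of $h$ established above.
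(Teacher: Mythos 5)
Your proposal is correct and follows essentially the same route as the paper: chain Claims~\ref{claim:meso},~\ref{claim:htow}, and~\ref{claim:PDE}, dispose of the constant part of the test function via conservation of mass, and for mean-zero $\psi$ pass to a periodic $C^4$ antiderivative with third derivative $\psi$, then integrate by parts three times in~\eqref{weak-def}. Your sign bookkeeping ($(-1)^3$ on the left matching the minus sign on the right of~\eqref{weak-def}) and your explicit verification of the hypotheses of Claim~\ref{claim:htow} via Claim~\ref{claim:meso} at $t=0$ are both accurate and slightly more detailed than the paper's own write-up.
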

\begin{proof}
By Claim~\ref{claim:meso}, $\wN$ converges hydrodynamically to $w$, and by Claim~\ref{claim:htow}, $\hN$ then converges hydrodynamically to the unique periodic $h$ such that $\int h(t,x)dx=M$ for all $t$ and $h_{xxx}=w$. Also, Claim~\ref{claim:PDE} gives that $w$ is the weak solution to $w_t=-\partial_{xxxx}\hatJ(w)$. Now, note that~\eqref{weak-def-ii} is clearly satisfied for $\psi\equiv1$, so it suffices to show~\eqref{weak-def-ii} for all $\psi\in C^1(\unit)$ which integrate to zero. For such $\psi$, there exists a function $\phi\in C^4(\unit)$ such that $\phi^{(k)}$, $k=0,1,2$ are all periodic and such that $\phi_{xxx}=\psi$. We substitute $\phi_{xxx}=\psi$ into the lefthand side of~\eqref{weak-def-ii}, integrate by parts, and use the fact that $w=h_{xxx}$ satisfies~\eqref{weak-def}.
 \end{proof}

So far, we have only established the \emph{form} of the PDEs governing $w$ and $h$. We must now actually compute the function $\hat J$. Note that according to $\eEf$ the points $(\E\varbar{w}(t), \E \bar J\l({\bf w}_\idxsetx(t)\r))$ should lie on the curve $\{(\omega,\hatJ(\omega))\mid\omega\in\R\}$. Thus, we can compute $\hat J$ numerically simply by interpolating these points! This is the essence of our numerical method, described in full in Section~\ref{sec:num}.

But is it possible to compute $\hat J$ analytically instead of resorting to numerics? To address this question, we need to explain why we expect a function $\hat J$ satisfying $\eEf$ to exist in the first place. This has to do with the form of the \emph{local equilibrium} (LE) distribution of $\wN$, discussed in Section~\ref{sec:LE}.  


\subsection{Numerical Verification of Claim Assumptions}\label{subsec:theory-numeric}
Let us now check numerically the assumptions of Claims~\ref{claim:meso} and~\ref{claim:PDE}. Namely, we need to check $\eE$, $\eV$,~\eqref{bd1},and~\eqref{bd2}. Each of these conditions can be written in terms of expectations of the form $\E[f(\wN(t))]$. For details on how we estimate such expectations numerically, see Section~\ref{subsec:setup}.
\begin{figure}
\begin{subfigure}[b]{0.48\textwidth}
\includegraphics[width=\textwidth]{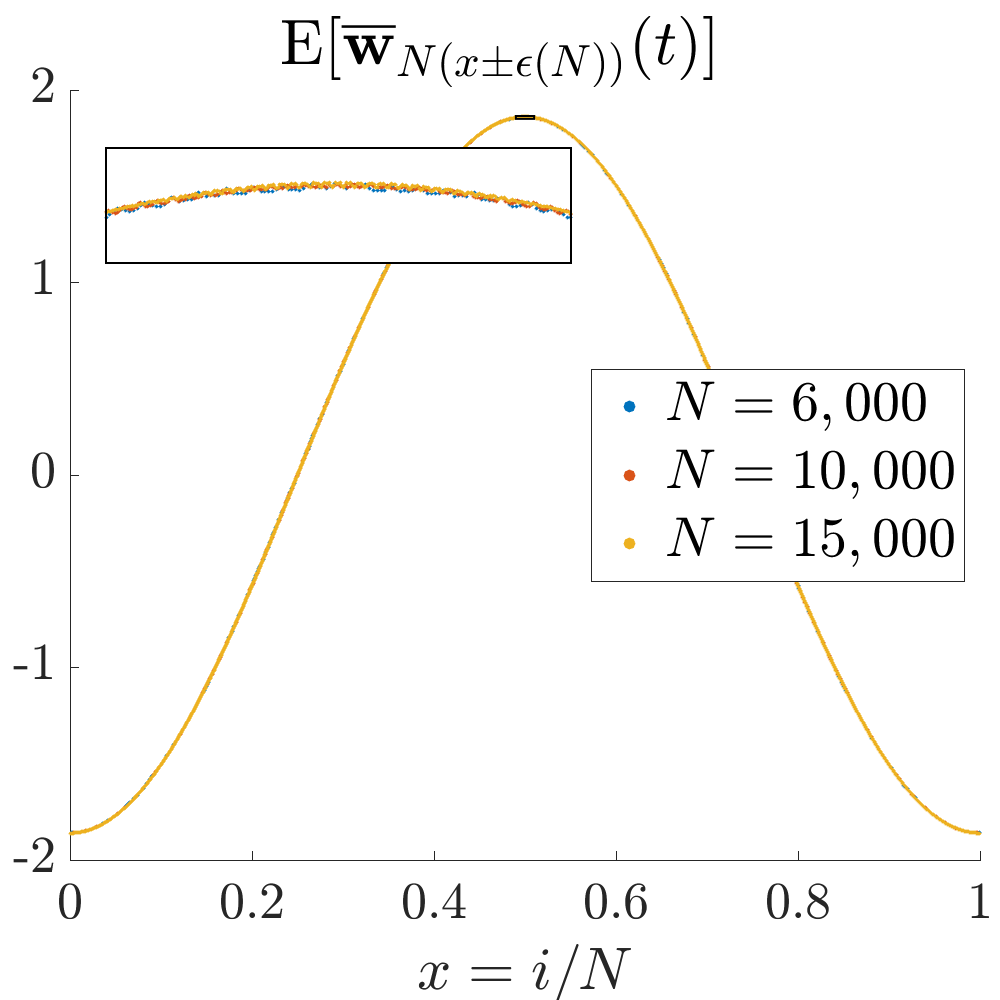}
\caption{}
\label{fig:E}
\end{subfigure}
\begin{subfigure}[b]{0.48\textwidth}
\includegraphics[width=\textwidth]{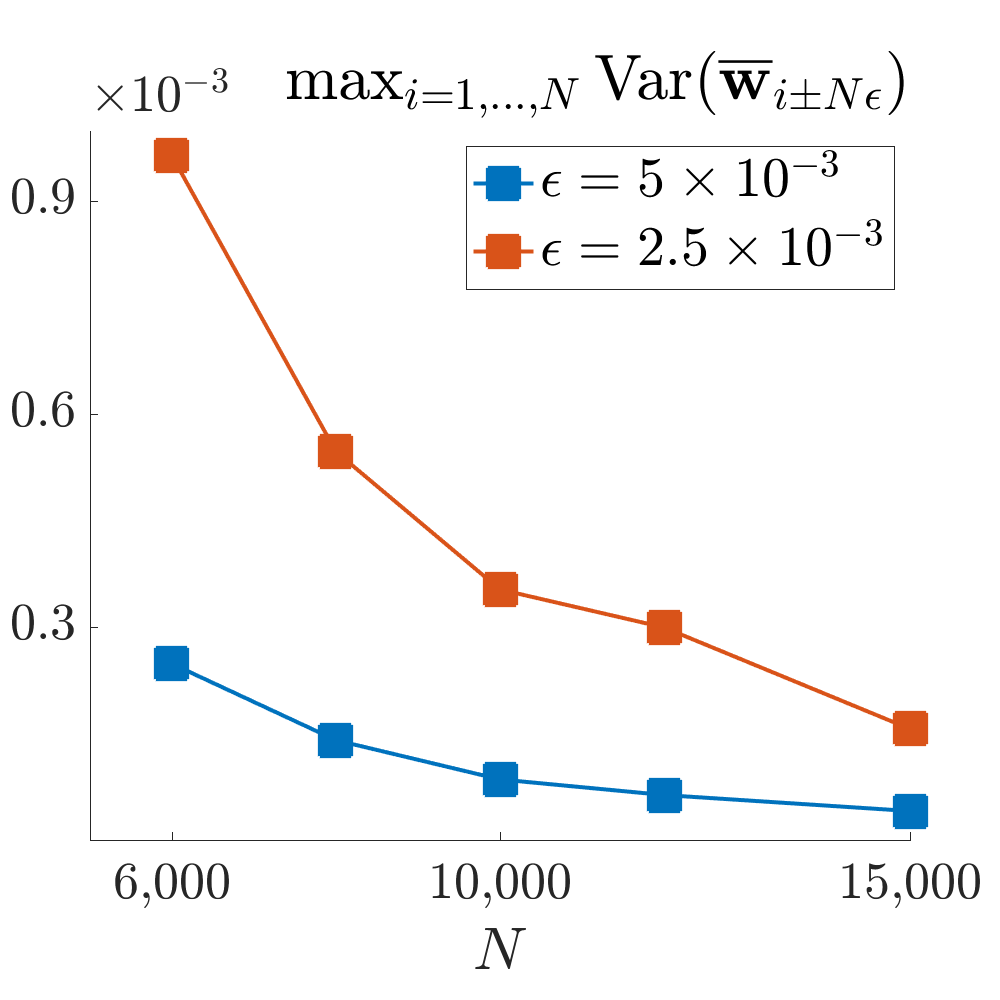}
\caption{}
\label{fig:V}
\end{subfigure}
\caption{In (a), we confirm $\eE$. The figure shows that $\E\bar{\bf w}_{N(x\pm\epsilon(N))}(t)$ converges as $N\to\infty$ to some $w$. We choose $\epsilon=\epsilon(N)$ as a proxy for taking a double limit; see the text for details. In (b), we confirm $\eV$. As expected, the variance of the window average goes to zero as $N\to\infty$ for each fixed $\epsilon$.}
\vspace{-0.7cm}
\end{figure}

Figure~\ref{fig:E} and~\ref{fig:V} confirm that the two limits $\eE$ and $\eV$ hold. In both figures, $\wN(t)$ is computed as the third order finite difference of $\hN(t)$, generated from an initial condition $\hN(0)$ satisfying~\eqref{init}, $\beta=3$, with $h_0(x) = 0.0075\sin(2\pi x)$and such that $\wN(0)$ satisfies~\eqref{init}, $\beta=0$, with $w_0=h_0'''$.

The double limit $\eE$ as $N\to\infty$ and $\epsilon\to0$ is delicate. This is because, as we show later in Figure~\ref{fig:gazon}, the profile $i/N\mapsto \E w_i$ varies roughly, but we want to show their sliding window averages converge to a smooth limit. We cannot take $N\to\infty$ numerically, and for every finite $N$, if $\epsilon$ is small enough (e.g. smaller than $1/N$), the window average $\E\varbar w(t)$ will revert back to being roughly varying. We therefore cannot take $\epsilon$ too small. We circumvent this problem with the following heuristic. For each $N$, we choose a ``good'' $\epsilon(N)$: for $\epsilon>\epsilon(N)$, $\E\varbar w(t)$ is smooth but biased, whereas for $\epsilon<\epsilon(N)$, it is unbiased but rough. We then check that $\E\overline{\bf w}_{N(x\pm\epsilon(N))}$ is converging as $N\to\infty$, as shown in Figure~\ref{fig:E}. Figure~\ref{fig:V} shows that the variance of the window average decays as $N\to\infty$ for each fixed $\epsilon$, i.e. as the window size increases. This suggests that pairs $w_i, w_j$, $i\neq j$ are uncorrelated or have low correlation.  

Figure~\ref{fig:Ef} shows that $\eEf$ is satisfied for $f=J$ and $f(w)=w^2$, using the same sinusoidal initial condition. \begin{figure}
\centering
\includegraphics[width=\textwidth]{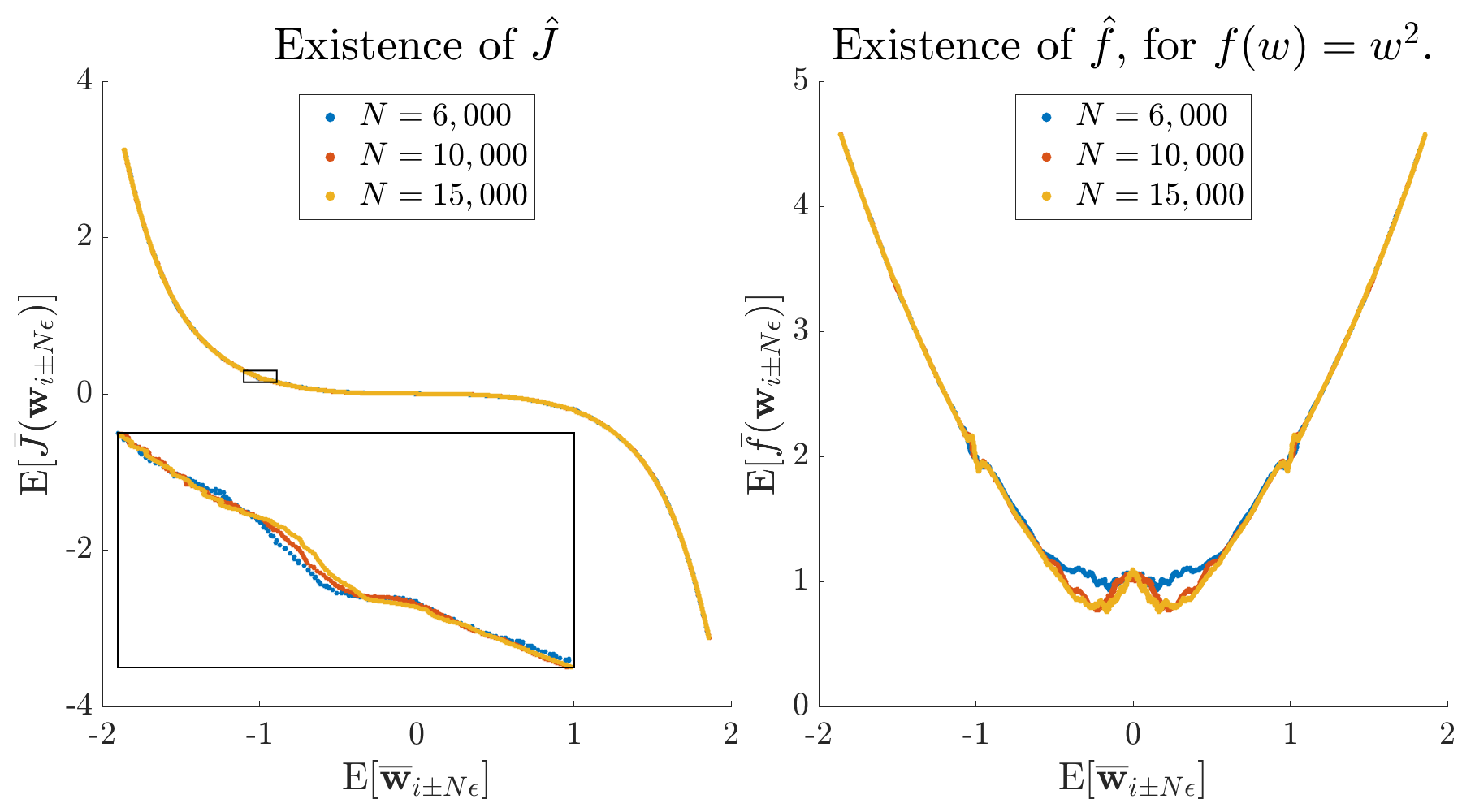}
\caption{Here we confirm $\eEf$ for the two functions $f(w)=J(w) = 2e^{-3K}\sinh(Kw)$ (left) and $f(w)=w^2$ (right). The existence of $\hat J$ (i.e. $\hat f$ for $f=J$) is the key reason a macroscopic dynamics emerges in the hydrodynamic limit. Estimation of $\hat J$ will enable us to determine the PDE numerically.}
\label{fig:Ef}
\vspace{-0.7cm}
\end{figure}
In the figure, we plot the points $(\E\overline{\bw}_{i\pm N\epsilon}(t), \E\bar f(\bw_{i\pm N\epsilon}(t)))$, $i=1,\dots, N$, and confirm that they lie on a fixed curve in the $N,\epsilon$ limit. We take the double limit $\Nepslim$ using the same heuristic as with $\eE$: for each $N$, we choose a ``good" $\epsilon(N)$ for the length of the averaging interval. 

We now turn to the boundedness conditions~\eqref{bd1} and~\eqref{bd2}. The top middle panel of Figure~\ref{fig:gazon} depicts $\E[w_i(t)^2]$, $i=1,\dots, N$ at three points in time at $N=400$. This is evidence for the fact that $\E|w_i(t)|$ remains bounded over time and over $i=1,\dots, N$, since $\E|w_i|\leq \sqrt{\E[w_i^2]}$ and we see that $\max_i\E[w_i(t)^2]$ is decreasing in time. Meanwhile, the bottom panel shows that $\max_{i=1,\dots,N}\E|w_i(t)|$ remains bounded as $N$ increases. Similarly, the top right panel of Figure~\ref{fig:gazon} shows $\E[J(w_i(t))]$ remains bounded over time and over $i=1,\dots, N$, while the bottom right panel shows it remains bounded as $N$ grows. 
\section{Local Equilibrium, but no Local Gibbs} \label{sec:LE} 
Let us return to the questions posed at the end of Section~\ref{subsec:theory}: can we compute $\hat J$ analytically, and why should we expect $\hat J$ satisfying $\E\bar J({\bw}_{\idxsetx})\approx \hatJ(\E\ol{\bw}_\idxsetx)$ to exist at all? To address these questions, we first review the key ideas in~\cite{kat21} on ``smooth" and ``rough" local equilibrium (LE) states. We then show that $\Law(\zN)$ is not a local Gibbs measure.
\subsection{Local Equilibrium}\label{subsec:LE}
This section reviews ideas from~\cite{kat21} and is primarily for the reader's convenience. Informally, a Markov jump process $\vN$ has an LE state if there is an $M$-parameter family of distributions (where $M$ is fixed as $N\to\infty$) such that for each $t$ and $x$, the PDE-relevant information contained in the joint law of $\{v_i(t)\}_{i\in\idxsetx}$ is fully determined by a single measure in this family via some parameters $\lambda_1(t,x),\dots, \lambda_M(t,x)$ specifying this measure. What we mean by ``PDE-relevant" will become clear at the end of the section. 

Here we will only discuss LE states which can be described by a $M=1$ parameter, \emph{mean-parameterized} family $\{\mu[\lambda]\mid\lambda\in\R\}$. Here, each $\mu[\lambda]$ is a probability mass function (pmf) on $\Z$, and ``mean-parameterized" means $\lambda = m_1(\mu[\lambda])$. The prototypical LE state takes the form 
\beq\label{smooth-LE-typical}\Law(\vN(t))= \bigotimes_{i=1}^N\mu[v(t,i/N)]\eeq for a continuous function $v(t,\cdot):\unit\to\R$, where $\otimes$ denotes taking a product of measures. Thus, the joint distribution $\{v_i(t)\}_{i\in\idxsetx}$ is fully determined by $\mu[v(t,x)]$, since the random variables $v_i(t)$, $i\in\idxsetx$ are independent and approximately distributed according to $\mu[v(t,x)]$ when $N\gg1$, $\epsilon\ll1$. Now, define $\hat f(v):=\mu[v](f)$, the expectation of $f$ under $\mu[v]$. Note that under~\eqref{smooth-LE-typical}, we have \beq\label{proto-Ef}\E[v_i]= v(t,i/N),\quad \E f(v_i) = \hat f(\E v_i).\eeq If there is a pmf $p$ dominating the measure family $\mu[\cdot]$ (see~\cite{kat21} for the details), then $\hat f$ is finite and continuous for any $f\in L^1(p)$. Moreover, by continuity of $v(t,\cdot)$ and $\hat f$, we can take mesoscopic averages of the equality $\E f(v_i)=\hat f(\E v_i)$ to conclude that $\eEf$ is satisfied. Thus, for prototypical LE states, $\hat f$ exists thanks to the fact that the marginals $\Law(v_i)$ belong to a single mean-parameterized measure family.


Of course,~\eqref{smooth-LE-typical} is an idealized situation, and for general interacting particle systems we should not expect $\Law(\vN(t))$ to be an exact prototypical LE state. But the prototypical LE state --- in particular the equalities~\eqref{proto-Ef} --- serve as inspiration for our definition of \emph{smooth} LE states:
\begin{definition}[Smooth LE State~\cite{kat21}]
We say a process $\vN$ has a \emph{\textbf{smooth}} LE state if~\eqref{V} and the following hold for each $t>0$ (dependence on $t$ omitted below):
\begin{align*}
\label{Ep}&\text{There exists a continuous $v:\unit\to\R$ such that}\tag{E$\,'$}\\
&\lim_{N\to\infty}\E[ v_{Nx+k}]= v(t,x),\quad\forall \,x\in\unit, \,k=0,1,2,\dots\text{fixed}\\ 
\label{Efp}&\text{For all ``suitable" }f,\text{ there exists a continuous }\hat f\text{ such that  }\tag{Ef$\,'$}\\ &\E f( v_{Nx+k})\stackrel{N}{\approx}\hat f\l(\E v_{Nx+k}\r), \quad \forall \,x\in\unit,\,k=0,1,2,\dots\text{ fixed}.
\end{align*} 
\end{definition}
The notation $A\stackrel{N}{\approx}B$ means $|A-B|\to0$ as $N\to\infty$. 
\begin{remark} The class of ``suitable" functions $f$ for which $\eEfp$ is satisfied will depend on the LE state. For the prototypical LE state~\eqref{smooth-LE-typical} with dominating pmf $p$, this class consists of the functions $f\in L^1(p)$. 
\end{remark}
By contrast, 
\begin{definition}[Rough LE State~\cite{kat21}]
We say $ \vN$ has a \emph{\textbf{rough}} LE state if $\eV$, $\eE$, and $\eEf$ hold for each $t>0$, while 
 $\eEp$ and $\eEfp$ do \emph{\textbf{not}}. \end{definition} 
To explain the reason for the names ``smooth" and ``rough", we define ``smoothly" and ``roughly" varying as follows.
\begin{definition}[\cite{kat21}]
We say a sequence of vectors $\mbf u_N\in\R^N$, $N=1,2,\dots$ is \emph{\textbf{smoothly}} varying in a neighborhood of $x$ if for any finite $R\geq 0$ we have
$$\lim_{N\to\infty}\max_{|i-Nx|\leq R}|u_{i+1}-u_i| = 0.$$ Otherwise, ${\mbf u}_N $ is \emph{\textbf{roughly}} varying. 
\end{definition}
It is straightforward to see that if $\vN$ has a smooth LE, then the expectations $(\E v_i)_{i=1}^N$ and $(\E f(v_i))_{i=1}^N$ are smoothly varying in the neighborhood of each $x$.

We have already shown in Figures~\ref{fig:E},~\ref{fig:V}, and~\ref{fig:Ef} that $\eV$, $\eE$, and $\eEf$ are satisfied for the Metropolis $\wN$ process. Let us now show that $\eEp$ and $\eEfp$ are not satisfied. It is sufficient to show $(\E w_i)_{i=1}^N$, and $(\E f(w_i))_{i=1}^N$ for some $f$, are roughly varying. Consider Figure~\ref{fig:gazon}, which depicts the observable expectations $\E w_i$, $\E w_i^2$, and $\E J(w_i)$. We see that $\E w_i$ and $\E w_i^2$ are roughly varying, since the rough variation persists as $N$ increases (bottom panel). It also persists as time evolves (top panel). Thus, we have confirmed that $\wN$ has a rough LE state. This is itself an interesting fact; it shows that the rough LE state discovered in~\cite{kat21} is not an isolated phenomenon. 

\begin{figure}
\begin{subfigure}[b]{\textwidth}
\includegraphics[width=\textwidth]{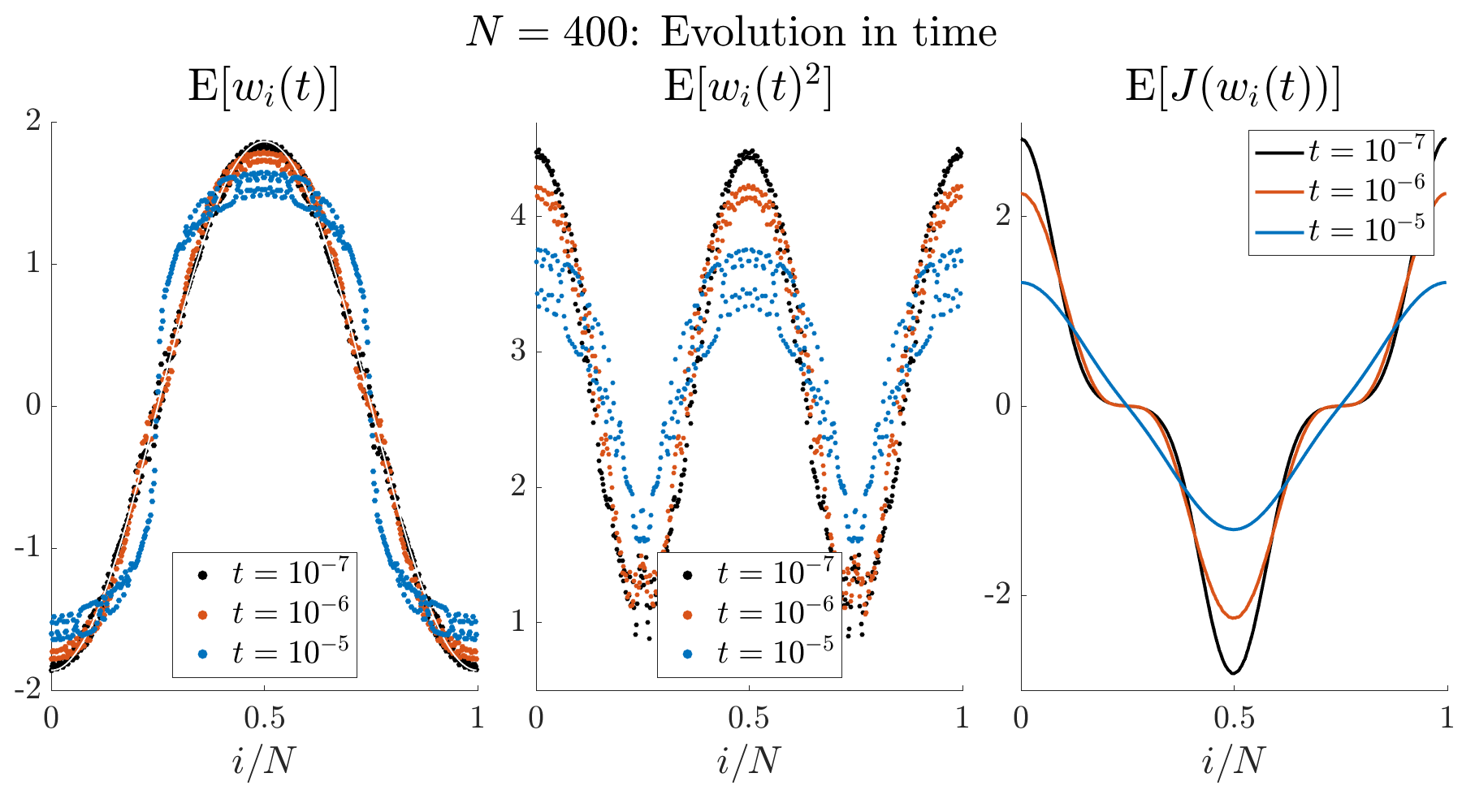}
\vspace{-0.6cm}
\caption{Here we show the expectations of three observables of the $\wN$ process at different points in time, with $N=400$. In the left panel, the thin white line is the initial expected profile, taken to be $\E[w_i(0)]=\mathrm{const.}\times\cos(2\pi i/N)$ exactly. Despite this smooth initial condition, we see that the $\E[w_i(t)]$ profile roughens over time. The $\E[w_i^2]$ profile is also roughly varying, although interestingly enough, $\E[J(w_i)]$ is smoothly varying.}
\label{fig:gazon-time-evoln}
\end{subfigure}
\vspace{-0.9cm}
\begin{subfigure}[b]{\textwidth}
\includegraphics[width=\textwidth]{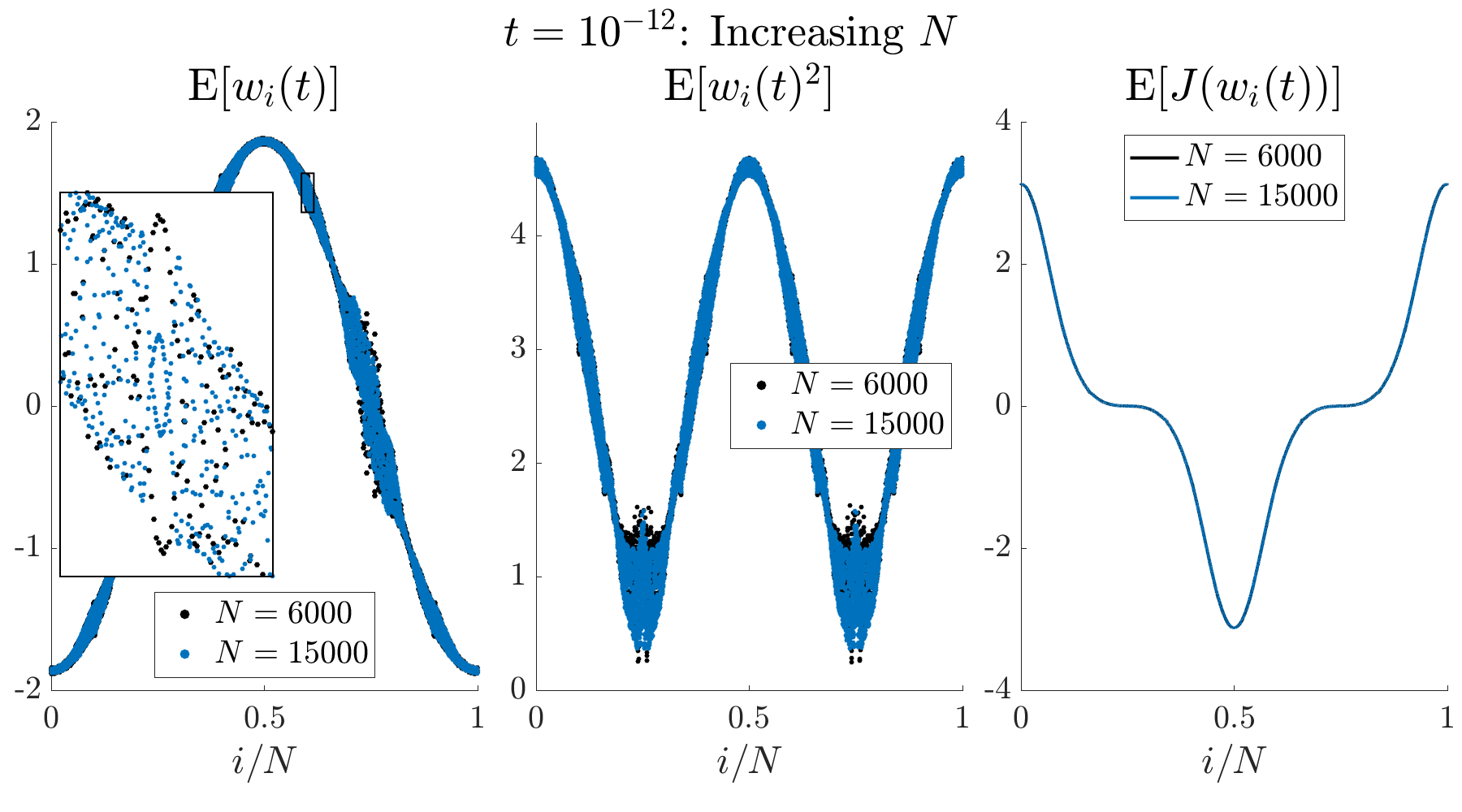}
\caption{The rough variation observed in (a) persists also as $N$ increases, confirming that $\wN$ does \emph{not} have a smooth LE state. }
\label{fig:gazon-N}
\end{subfigure}
\caption{}
\label{fig:gazon}
\vspace{-0.5cm}
\end{figure}

Based on these observables, we see that the qualitative properties of the local equilibrium state of the Metropolis process $\wN$ are very similar to those of the Arrhenius $\wN^\arr$. For the Arrhenius process, the points $(i/N, \E w_i^\arr)$ also form a ``cloud'' with well-defined boundaries, and which narrows near integer values of the range. In addition, despite the fact that both processes have a rough LE, the key functions $f$ (whose corresponding $\hat f$ arises in the PDE) has the property that $(\E f(w_i))_i$ is smoothly varying in both cases. These qualitative similarities between the Arrhenius and Metropolis LE are interesting because as we will show at the end of this section, there is a key difference between the local equilibrium measures of the two processes: for the Arrhenius process, $\Law(\wN^\arr)$ is induced by a local Gibbs distribution on $\zN^\arr$, whereas for the Metropolis process, $\zN$ does \emph{not} follow a local Gibbs distribution. 
\begin{remark}The phenomenon of narrowing near the integers is explained in~\cite{kat21} for the Arrhenius process, but the explanation relies on the local Gibbs assumption, not valid for the Metropolis process.\end{remark}

Let us return to our main goal: computing $\hat f$ for $f=J$. Why might we expect $\hat f$ to exist for a rough LE state like that of the Metropolis $\wN$? There cannot possibly be a mean-parameterized measure family describing each $\Law(w_i)$, because this would imply $\E[J(w_i)]$ can be expressed as a function of $\E[w_i]$. Plotting the former against the latter confirms this is not the case (figure not shown). To answer the question, it is insightful to return to $\wN^\arr$, observed in~\cite{kat21} to have a rough LE state. In that paper, we first confirmed that the distributions $\Law(w_i^\arr)$ are the pmfs induced by $\Law(\zN^\arr)=\rho[\bml]$, the local Gibbs measure defined in~\eqref{gibbs-measure}. We then used this explicit knowledge to show that while $\Law(w_i^\arr)$ is \emph{not} mean-parameterized, we \emph{do} have that
\beq\label{meso-measure-av}\bar \mu_\idxsetx:=\frac{1}{2N\epsilon}\sum_{\argidxsetx i}\Law(w_i^\arr) \approx \mu[\E\ol{\bf w}^\arr_\idxsetx]\eeq for some mean-parameterized family $\mu[\cdot]$. As a result, defining $\hat f(\omega)=\mu[\omega](f)$, we see that $$\E \bar f({\mbf w}^\arr_\idxsetx)=\bar\mu_\idxsetx(f) \approx \mu[\E\ol{\bf w}^\arr_\idxsetx](f)=\hat f(\E\ol{\bf w}^\arr_\idxsetx).$$ The first equality uses linearity of expectation with respect to measures; e.g. if $2N\epsilon=2$ and $\mu_i=\Law(w_i^\arr)$, we are using that $(\int fd\mu_1 + \int fd\mu_2 )/2 = \int fd(\mu_1+\mu_2)/2$. 
\begin{remark}
The Arrhenius LE state shows that the PDE-relevant information contained in the joint law of $\{v_i(t)\}_{i\in\idxsetx}$, is the measure $\bar \mu_\idxsetx$. 
\end{remark}
Due to the qualitative similarity between the LE state of the Arrhenius $\wN^\arr$ and the Metropolis $\wN$, we speculate that the reason for the existence of $\hat J$ is the same for the two LE states: there is some parameterized measure family to which mesoscopic averages of $\Law(w_i)$ all belong. This is supported by Figure~\ref{fig:Ef} confirming $\eEf$ both for $f(w)=J(w)$ and $f(w)=w^2$. To compute $\hat J$ explicitly, however, we would need to know this measure family. But our only guess is the family induced by a local Gibbs product measure on $\Law(\zN)$, and we will now show that this guess is incorrect. 
\subsection{Local Gibbs Approximation: False but Numerically Useful} \label{subsec:gibbs}
Recall from~\eqref{gibbs-measure} the form of the local Gibbs product measure $\rhogibbsNd$. By completing the square in the exponent, we can also write 
$$\rhogibbsNd = \otimes_i\rho[\lambda_i],\quad\text{where }\quad\rho[\lambda](n) = \frac{e^{-K(n-\lambda)^2}}{\calZ(\lambda)},\quad n\in\Z.$$ Here, $\calZ(\lambda)=\sum_{m=-\infty}^\infty \e(-K(m-\lambda)^2).$ To show that $\Law(\zN(t))$ is not a local Gibbs distribution for any $\bml$, consider the following specially chosen observables: $$f_i^\pm(\mbf z) = \e\l(\pm 2K(z_{i-1}-2z_i + z_{i+1})\r).$$ We will compare the expectation of the $f_i^\pm$ under the local Gibbs measure and under the true measure. 
Now, we showed in Section 6.1 of~\cite{kat21} that
\beq\label{exp-expect-gibbs}\rho[\lambda](e^{cKz}) = \e(c^2K/4 + cK\lambda)\frac{\calZ(\lambda+c/2)}{\calZ(\lambda)}.\eeq Using this formula, the fact that $\calZ$ is a period 1 function, and the independence of the $z_i$ under the product measure $\rhogibbsNd$, we compute
\beqs\label{f-expect-gibbs}
\rho[\bml]&(f_i^\pm) = \e\l(6K\pm 2K(\lambda_{i+1}-2\lambda_i + \lambda_{i-1})\r),\\
&\implies\rho[\bml]\l( f_i^+\r)\times\rho[\bml]\l(f_i^-\r) \equiv e^{12K}\eeqs for all $i$, and regardless of $\bml$. 
Thus, we can confirm that $\Law(\zN(t))$ is not a local Gibbs measure by showing that under the true measure,
\beq\label{log-prod}\log\big(\E\l[ f_i^+(\zN(t))\r]\times\E \l[f_i^-(\zN(t))\r]\big)\neq 12K.\eeq This is shown in Figure~\ref{fig:no-gibbs}, with $K=1$. Interestingly, the constant $12K$ seems to be a tight lower bound for the lefthand side of~\eqref{log-prod}. \\
\begin{wrapfigure}{r}{0.46\textwidth}
\vspace{-0.4cm}
\centering
\includegraphics[width=0.46\textwidth]{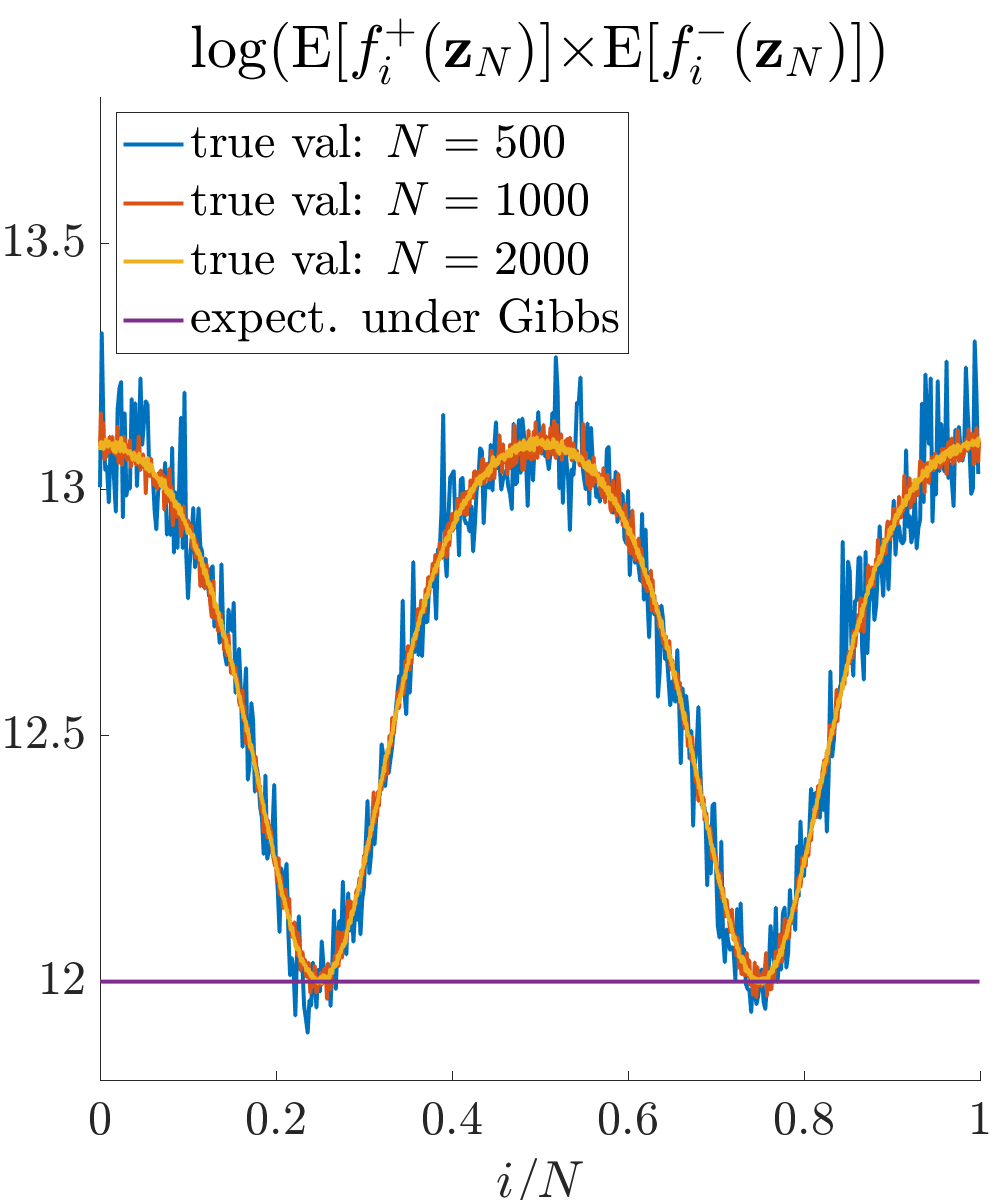}
\caption{This figure confirms~\eqref{log-prod}, which implies that for the Metropolis process, $\Law(\zN(t))$ is not given by a local Gibbs measure. Here, $K=1$, and we see that $12K=12$ is a lower bound for the quantity on the lefthand side of~\eqref{log-prod}.}
\label{fig:no-gibbs}
\end{wrapfigure}
\noindent\textbf{Useful Numerical Estimate.} As we will explain in Section~\ref{subsec:hatJ}, the estimate $\hat J_\gibbs$ of $\hat J$ obtained by assuming $\Law(\zN)=\rho[\bml]$ is a useful baseline estimate. To obtain $\hat J_\gibbs$ we must be able to write $\rho[\bml](\fxnbar J w)$ as a function of $\E\varbar w$. To do so, we first note that $\lambda_i$ must be given by $\lambda_i=\lambda(\E z_i)$, where the function $\lambda$ is the inverse of $\lambda\mapsto m_1(\rho[\lambda])$. Now, $\rho[\bml](\fxnbar J w)$ is a function of $\lambda_i$, $i\in\idxsetx$ and therefore it is some further function of the $\E[z_i]$. 

For general $K$, it is unclear whether this further function depends on the $\E[z_i]$ only through $\E\varbar w$, i.e. whether a function sending $\E\varbar w$ to $\rho[\bml](\fxnbar J w)$ exists at all. This is because the function $\lambda$ depends nonlinearly on $\E[z_i]$.  However, when $K$ is small, simplifying approximations make this possible, and one obtains
\beq\label{Jgibbs}\hatJgibbs(\omega) = 2e^{-3K/2}\sinh(K\omega).\eeq See~\cite{gao2020} for the computation of this function. We do not bother obtaining a more exact estimate of $\hat J_\gibbs$ for larger $K$ since the local Gibbs distribution is incorrect. We only need a baseline estimate which will help us compute the true $\hat J$, and it will turn out that the estimate~\eqref{Jgibbs} suits our needs, even for larger $K$. 

We note that the discrepancy observed in~\cite{gao2020} between $\hN$, $N\to\infty$ and the solution to the PDE $h_t=-\partial_x\hatJgibbs(h_{xxx})$, is \emph{not} caused by small $K$ approximations. The simulations in that paper take $K=0.25$, for which $\hatJgibbs$ is a very accurate estimate of the local Gibbs expectation. Rather, the discrepancy is caused by the fact that the local Gibbs distribution is not the correct LE state.


\section{Numerical Implementation}\label{sec:num}
We begin in Section~\ref{subsec:setup} by describing how we simulate the Metropolis dynamics and compute expectations of observables. In Section~\ref{subsec:hatJ}, we explain in more detail how we compute the function $\hatJ$. Finally, in Section~\ref{subsec:PDE-confirm}, we confirm that we computed the function $\hatJ$ correctly: we show that the microscopic processes $\wN$ and $\hN$ converge to the solutions of the PDEs~\eqref{w-PDE} and~\eqref{h-PDE}, respectively, and \emph{not} to the corresponding PDEs with $\hatJgibbs$. 

\subsection{Set Up}\label{subsec:setup}
Since the microscopic dynamics is a Markov jump process, the path $\{\hN(t)\}_{t\geq0}$ is a step function, with $\hN(t) = \mbf h_{k}$ when $t\in [t_k, t_{k+1})$. Therefore, simulating the process in a time interval $[0,T]$ amounts to drawing the pairs $(\mbf h_k, t_k)$, $t_k\leq T$, according to the law of the process $\hN$. We do so using the Kinetic Monte Carlo algorithm (KMC)~\cite{KMC}, presented in Algorithm~\ref{alg:KMC} below. Note that the algorithm uses rates $r^{i,j}$ and rescales time, which is equivalent to using rates $N^4r^{i,j}$. 
\begin{algorithm2e}
\caption{KMC algorithm to simulate Markov jump processes}\label{alg:KMC}
\KwInput{$N$, initial value $\bh\in \Z^N$, macroscopic end time $t$, rates $r^{i,j}$.}
\KwOutput{Jump times $t_1, t_2, t_3,\dots$ and states $\bh_1,\bh_2,\bh_3,\dots$}
$s \gets 0$, $k \gets 0$, $\bh_k\gets \bh$\;
\While{$s \leq N^4t$}{
  $R\gets \sum_{|i-j|=1}r^{i,j}(\bh_k)$\;
  Draw $T\sim\mathrm{Exp}(R)$\;
  Draw $\bh_{k+1}$ from the pdf $p(\bh^{i,j}) = r^{i,j}(\bh_k)/R$\;
  $s \gets s + T$, $t_k\gets N^{-4}s$, $k \gets k+1$
}
\end{algorithm2e}
Given a smooth macroscopic initial condition $h_0(x)$, we initialize KMC with a microscopic height profile $\hN(0)$ drawn from
\beq\label{init:num}\hN(0) \sim \l(\l\lfloor N^3h_0\l(\frac iN\r)\r\rfloor  + \xi_i\r)_{i=1}^{N},\quad \xi_i\sim \text{Bernoulli}(p_i),\eeq where the $\xi_i$ are independent and $p_i$ is the fractional part of $N^3h_0(i/N)$. Thus $\E[h_i]=N^3h_0(i/N)$ exactly. This fact and the independence of the $\xi_i$ ensures that~\eqref{init}, $\beta=0$, is satisfied for $\wN$ and that $N^{-1}\sum_{i=1}^Nh_i(0)$ converges to a constant. 

Most of the quantities we need to compute are observables $\E[f(\wN(t))]$ of the third order FD process $\wN(t)$. We estimate these by drawing $M$ independent initial conditions $\hN^{(k)}(0)$, $k=1,\dots, M$ from the distribution~\eqref{init:num}, evolving them forward using KMC, and then estimating
\beq\label{sample-av}
\E[f(\wN(t))]\approx \frac 1M\sum_{k=1}^Mf(\mbf w^{(k)}_{N}(t)),
\eeq
where $\wN^{(k)}$ is the third order FD of $\hN^{(k)}$. Some observables $f$ we need to compute (such as the current observable $f(w) = \sinh(Kw)$) have extremely high variance, and the number of samples $M$ needed to sufficiently reduce the sample variance of~\eqref{sample-av} is intractable. For such observables, we can reduce the variance further by integrating over a small time window:
\beq\label{sample-time-av}
\E[f(\wN(t))]\approx \frac{1}{\Delta}\int_{I_{t,\Delta}}\E[f(\wN(s)]ds \approx \frac 1M\sum_{k=1}^M\frac{1}{\Delta}\int_{I_{t,\Delta}}f(\mbf w^{(k)}_{N}(s))ds,\quad \Delta\ll1.\eeq Here, $I_{t,\Delta}$ is some time interval of length $\Delta$ containing $t$. 
We can compute the time integrals in~\eqref{sample-time-av} exactly since the paths of the process are step functions. See Appendix~\ref{app:num} for justification of the time average approximation to the expectations $\E[f(\wN(t))]$. 
\begin{figure}
\centering
\includegraphics[width=\textwidth]{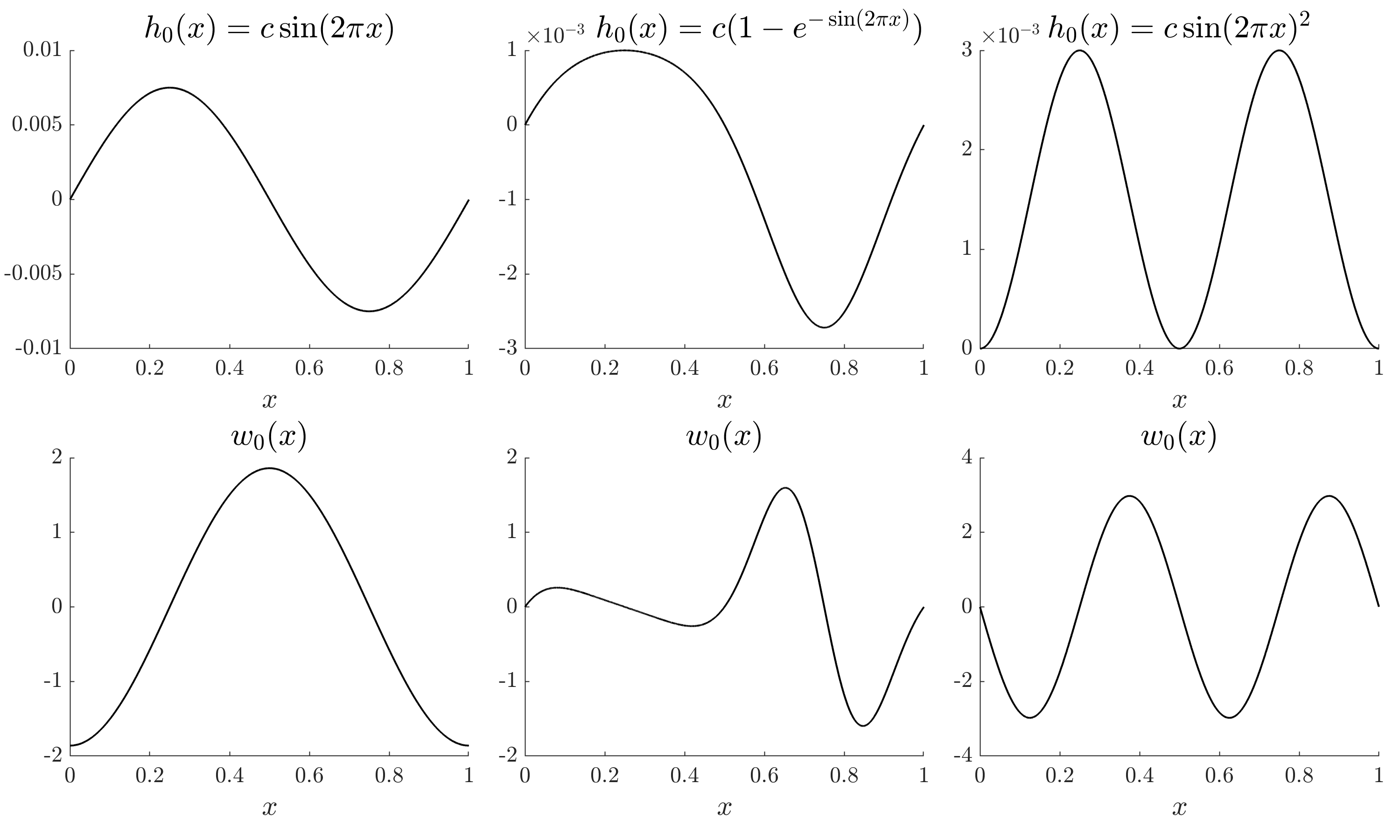}
\caption{Initial Conditions. We take $c=0.0075$ in the first (leftmost) column, $c=0.001$ in the second column, and $c=0.003$ in the third column. The initial condition $h_0(x)=c\sin^2(2\pi x)$ in the third column is reserved for computing $\hat J$.}
\label{fig:init}
\end{figure}

Figure~\ref{fig:init} depicts the initial conditions (ICs) we used in our simulations. The rightmost IC is reserved for computing the function $\hatJ$ in the PDE. We use the other two ICs to confirm the resulting $\hatJ$ gives the correct PDE. 

\subsection{Computation of Current}\label{subsec:hatJ}
In this section, we will describe our procedure to compute $\hatJ$, for which we will use statistics collected from a process with initial condition given by the third column in Figure~\ref{fig:init}, and $K=2$. 

As we mentioned in Section~\ref{subsec:theory}, our general strategy for computing $\hatJ$ is to interpolate the points $(\E\varbar w(t),\, \E \bar J({\bf w}_\idxsetx(t)))$ computed at multiple $(t,x)$. However, we will need to refine this strategy slightly to incorporate new information and to make numerical estimation more convenient. The new information we have is that, as seen in Figure~\ref{fig:gazon}, the expectations $\E J(w_i)$ vary smoothly with $i$. Therefore we will not average the expected current over space. The next modification is to replace expectations at a single time $t$ with expectations integrated over $s\in I_{t,\Delta}$. As explained above, sample average estimates of these time-integrated quantities have much lower variance. In sum, we replace $(\E\varbar w(t),\, \E \bar J({\bf w}_\idxsetx(t)))$ with $(\omega_{\epsilon,\Delta}(t,x), \J_\Delta(t,x))$, where 
 \beqs\label{bar-omega-J}
\omega_{\epsilon,\Delta}(t,x) := \frac1\Delta\int_{I_{t,\Delta}}\E\varbar w(s)ds,
\eeqs
and \beqs\J_{\Delta}(t,x) := \frac1\Delta\int_{I_{t,\Delta}}\E J(w_{\lfloor Nx\rfloor}(s))ds.\eeqs In what follows, we will write $\omega_{\epsilon,\Delta}(t,x)$ and $\J_\Delta(t,x)$ to denote our \emph{sample average estimates} of these quantities, computed as in~\eqref{sample-time-av}. We also sometimes abbreviate notation by writing $\omega_{\epsilon,\Delta}=\omega_{\epsilon,\Delta}(t,x)$ and $\J_{\Delta}=\J_{\Delta}(t,x)$ to denote these quantities at a generic point $(t,x)$. 
 
Next, we need to address an important issue with the strategy of interpolating $(\omega_{\epsilon,\Delta}, \J_\Delta)$ to compute the function $\hatJ$: namely, we need the $x$-coordinates $\omega_{\epsilon,\Delta}$ to span all of $\R$ in order to accurately estimate the value of $\hatJ(\omega)$ for $\omega$ ranging over all of $\R$. This is where the ``baseline estimate" $\hatJgibbs$ come in handy. Conveniently, $\hatJ(\omega)/\hatJgibbs(\omega)$ \emph{rapidly levels out to a constant asymptote as} $|\omega|\to\infty$! Therefore, it will be much simpler to estimate
\beq\label{sigmaK}\sigma(\omega):=\hatJ(\omega)/\hatJgibbs(\omega),\eeq and then obtain $\hatJ$ as $\hatJ=\sigma\hatJgibbs$. We can estimate $\sigma(\omega)$ by \emph{interpolating} the points
 \beq\label{epsDelt-points}\bigg\{\bigg(\omega_{\epsilon,\Delta}(t,x),\;\frac{\J_\Delta(t,x)}{\hatJgibbs(\omega_{\epsilon,\Delta}(t,x))}\bigg)\;\bigg\vert\; x=\frac1N,\frac2N,\dots, 1\bigg\}\eeq inside a bounded domain, and \emph{extrapolating} to a constant outside of it. 
 
 This strategy raises a new issue, which is that both $\J_{\Delta}$ and $\hatJgibbs(\omega_{\epsilon,\Delta})$ approach zero when $\omega_{\epsilon,\Delta}\to0$. We will address this issue shortly. First let us choose suitable parameters $t=t_N$, $\epsilon=\epsilon_N$, $\Delta=\Delta_N$ for each of $N=1000,2000,4000$. We will then plot three curves of points $\{(\omega_{\epsilon,\Delta},\;\J_\Delta/\hatJgibbs(\omega_{\epsilon,\Delta}))\}$ corresponding to these three values of $N$, to ensure that the curves have converged. 

\textbf{Step 1: Choose $t$.} We observe that $\sup_i|\E w_i(t)|$ decreases with time, so for the purpose of generating points $\omega_{\epsilon,\Delta}(t,x)$ which span a large interval, it is better to take $t$ small. On the other hand, $t$ must be sufficiently large that the process has had time to \emph{locally equilibrate}. As $N\to\infty$, the ``burn-in" time $t_N$ until local equilibration --- i.e. until we can expect the crucial condition $\eEf$ of a rough LE state to be satisfied --- should go to zero. In other words, equilibration occurs instantaneously when $N\to\infty$. However, for finite $N$ we must be careful to wait sufficiently long so as to avoid collecting statistics $(\omega_{\epsilon,\Delta}(t,x), \J_{\Delta}(t,x))$ from a pre-LE distribution. In order to determine whether a given time $t$ is past burn-in for a fixed $N$, we do the following heuristic test: first, we check that the points $(\omega_{\epsilon,\Delta}(t,x), \J_{\Delta}(t,x))$, $x=i/N$, all lie on a single curve, i.e. they pass the straight line test. Second, we check that for $t'>t$, the corresponding points lie on the same curve as at time $t$.

\textbf{Step 2: Choose $\epsilon,\Delta$}. We choose appropriate values $\epsilon=\epsilon_N$ and $\Delta=\Delta_N$ as follows: we plot the points~\eqref{epsDelt-points} (with $t=t_N$) for a range of $\epsilon$ and $\Delta$, and look for $\epsilon_N$, $\Delta_N$ which lead to curves which are neither too biased compared to the curve corresponding to the smallest $\epsilon,\Delta$, nor too noisy. Figure~\ref{fig:sig-eps-Delt} depicts these curves for fixed $N=4000$ and several values of $\epsilon,\Delta$. We omit points in the set~\eqref{epsDelt-points} for which $|\omega_{\epsilon,\Delta}|<\delta_0$ for a $\delta_0\ll1$. We see in the figure that the effect of varying $\Delta$ is much less significant than the effect of varying $\epsilon$. For $N=4000$, we take $\epsilon=0.0015$ and $\Delta=4\times 10^{-10}$. 
\begin{figure}
\centering
\includegraphics[width=0.8\textwidth]{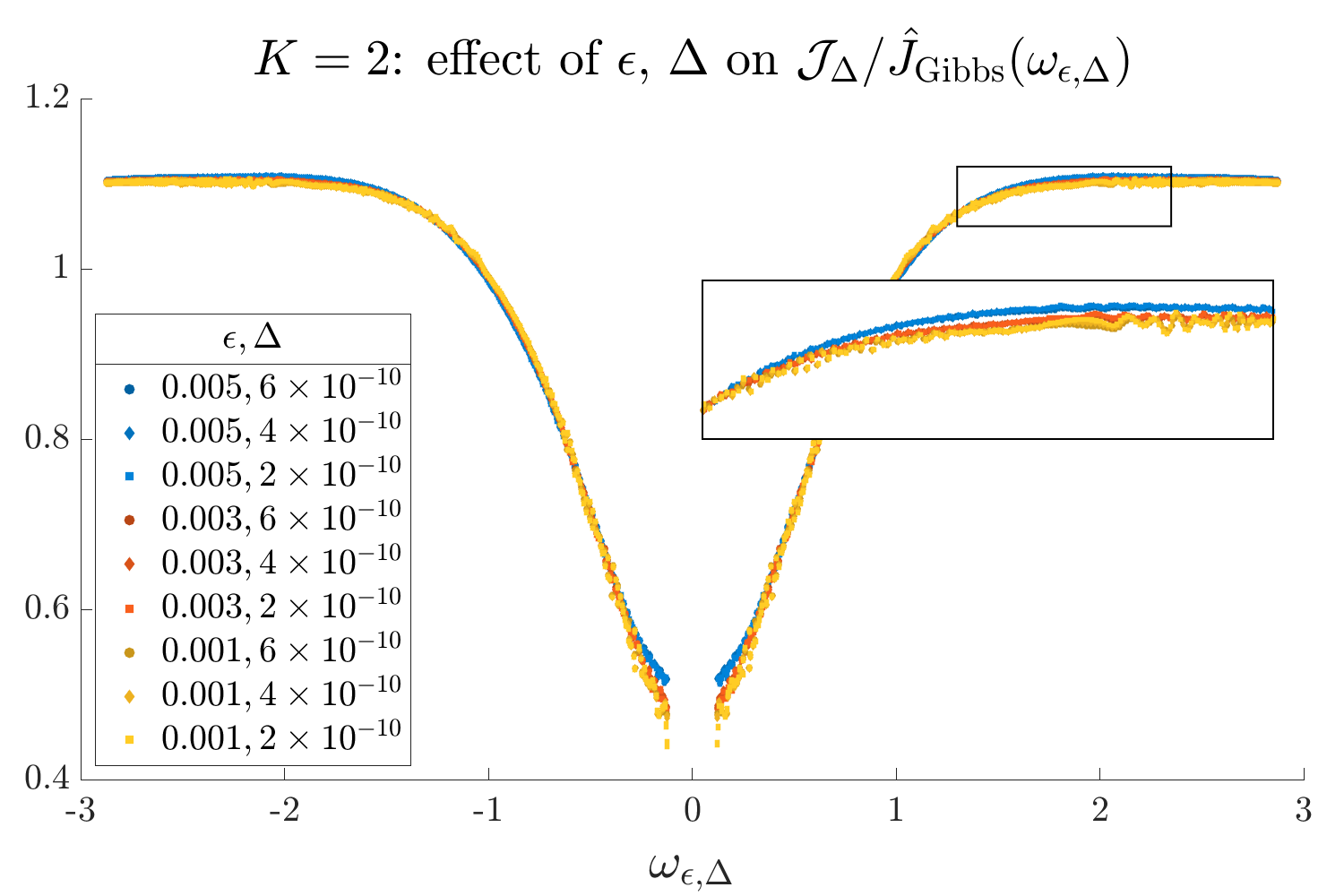}
\caption{Effect of varying $\epsilon$ and $\Delta$ on the curve of points $(\omega_\epsDel, \J_\Delta/\hatJgibbs(\omega_\epsDel))$ at fixed $N=4000$, and $K=2$. We see that varying $\epsilon$ has a more significant effect than does varying $\Delta$. Based on this plot, we choose $\epsilon_N=0.0015$ and $\Delta_N=4\times10^{-10}$.}
\label{fig:sig-eps-Delt}
\vspace{-0.5cm}
\end{figure}
Using this procedure for $N=1000,2000$, we take $\epsilon_N=0.003,0.002$, respectively, and $\Delta_N=4\times10^{-10}$ for both. 

\textbf{Step 3: ``Fill in" the curve near zero.} Having identified $\epsilon_N$ and $\Delta_N$, we next ``fill in'' the curve of points~\eqref{epsDelt-points} in the neighborhood $|\omega|<\delta_0$. We do so using the numerical observation that $\sigma$ has a local (and global) minimum at zero. This implies that for small values of $\omega$ we should have $\sigma(\omega)\approx a+ b\omega^2$ for some values $a,b$. We find optimal $a=a_N$, $b=b_N$ for each $N$ by solving
\beq\label{ab}
(a_N, b_N) = \arg\min_{(a,b)}\sum_{|\omega_{\epsilon,\Delta}|<\delta_1}\bigg(\J_{\Delta} - (a + b\,\omega_{\epsilon,\Delta}^2)\hatJgibbs(\omega_{\epsilon,\Delta})\bigg)^2,
\eeq where $\epsilon=\epsilon_N$, $\Delta=\Delta_N$, and the sum is over all points in the set~\eqref{epsDelt-points} such that $|\omega_{\epsilon,\Delta}|<\delta_1$. Here, $\delta_1$ is small but greater than $\delta_0$. We take $\delta_1>\delta_0$ in order to obtain a smoother transition between the quadratic approximation near the origin and the remaining curve.  

Next, we visually confirm that the filled in curves converge as $N$ increases. This is shown in Figure~\ref{fig:converge}. 
\begin{figure}
\centering
\includegraphics[width=0.8\textwidth]{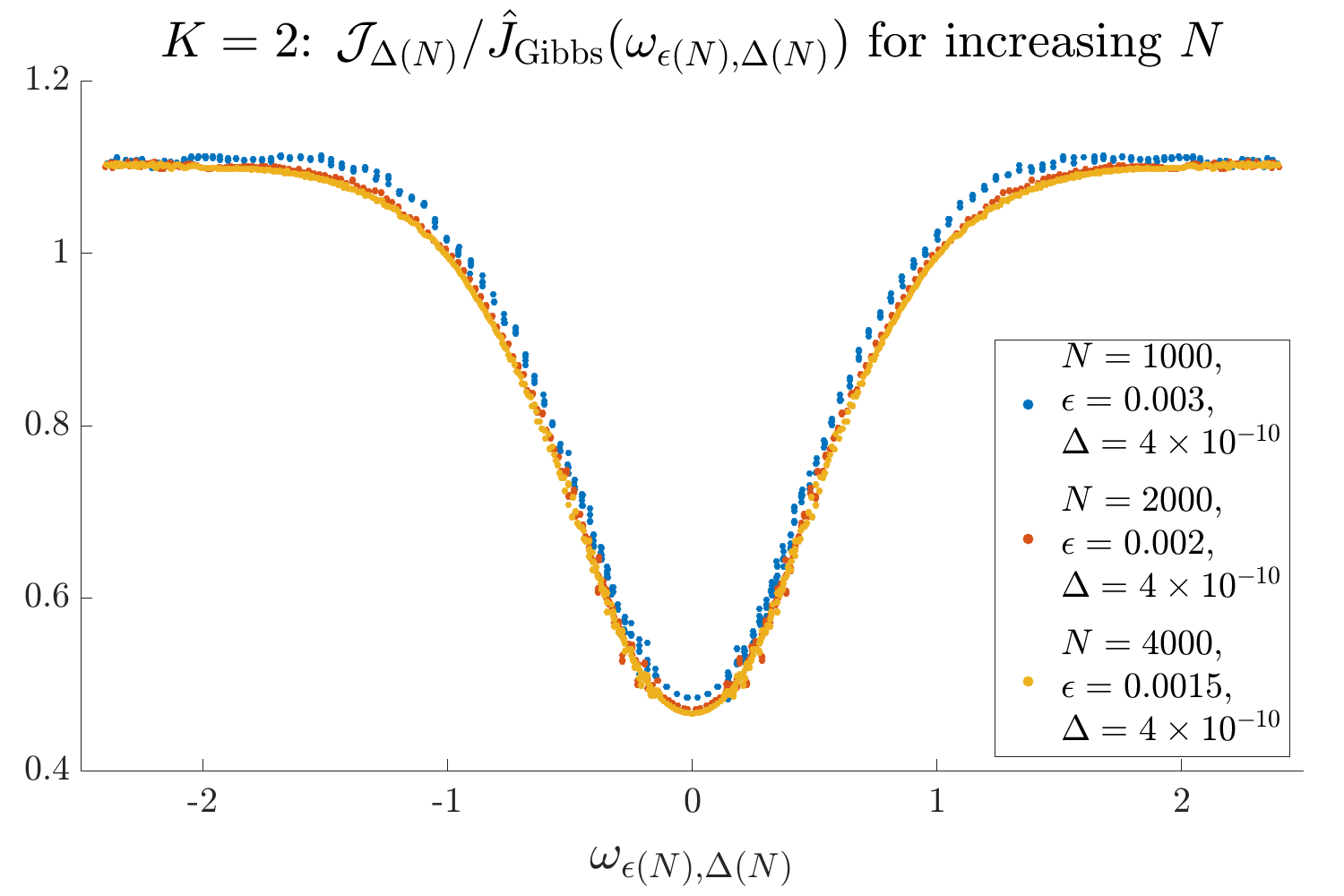}
\caption{Convergence with $N$ of the curve of points $(\omega_\epsDel, \J_\Delta/\hatJgibbs(\omega_\epsDel))$, where $\epsilon=\epsilon_N$, $\Delta=\Delta_N$ are chosen using the procedure described in the text. Near the origin, we substitute $ \J_\Delta/\hatJgibbs(\omega_\epsDel)$ by $a_N+b_N\omega_\epsDel^2$, where $a_N,b_N$ minimize~\eqref{ab}.}
\label{fig:converge}
\end{figure}
Finally, we use the $N=4000$ curve to compute the function $\sigma$, by fitting a smoothing spline to it. We fit the spline inside a bounded interval (e.g. $[-2.5,2.5]$ for $K=2$) by calling MATLAB's \texttt{fit} routine with the option ``smoothingspline". This routine implements the following minimization:
\beqs\label{spline}
\sigma= \arg\min_{\text{splines } s}&\;\;\lambda\!\!\sum_{|\omega_\epsDel|>\delta_0}\l(\J_\Delta/\hatJgibbs(\omega_\epsDel)- s(\omega_\epsDel)\r)^2 \\&+ \lambda\!\!\sum_{|\omega_\epsDel|<\delta_0}\l(a+b\omega_\epsDel^2 - s(\omega_\epsDel)\r)^2 + (1-\lambda)\int s''(x)^2dx.\eeqs The coefficient $\lambda\in (0,1)$ is a smoothing parameter. We then extrapolate the spline to be constant outside the bounded interval using MATLAB's \texttt{fnxtr}. 

Figure~\ref{fig:allUFOs} depicts the result of implementing this procedure for a range of $K$ values. For each $K$, we plot the points~\eqref{epsDelt-points} generated from the $N=4000$ process and using the chosen $\epsilon=\epsilon_N$, $\Delta=\Delta_N$ as described above. These curves are shown in color. They appear smoother near the origin because for $|\omega_\epsDel|\ll1$, we replace $(\omega_\epsDel, \J_\Delta/\hatJgibbs(\omega_\epsDel))$ with $(\omega_\epsDel, a+b\omega_\epsDel^2)$. The curves are overlayed with their spline approximations in black. 
\begin{figure}
\centering
\includegraphics[width=0.6\textwidth]{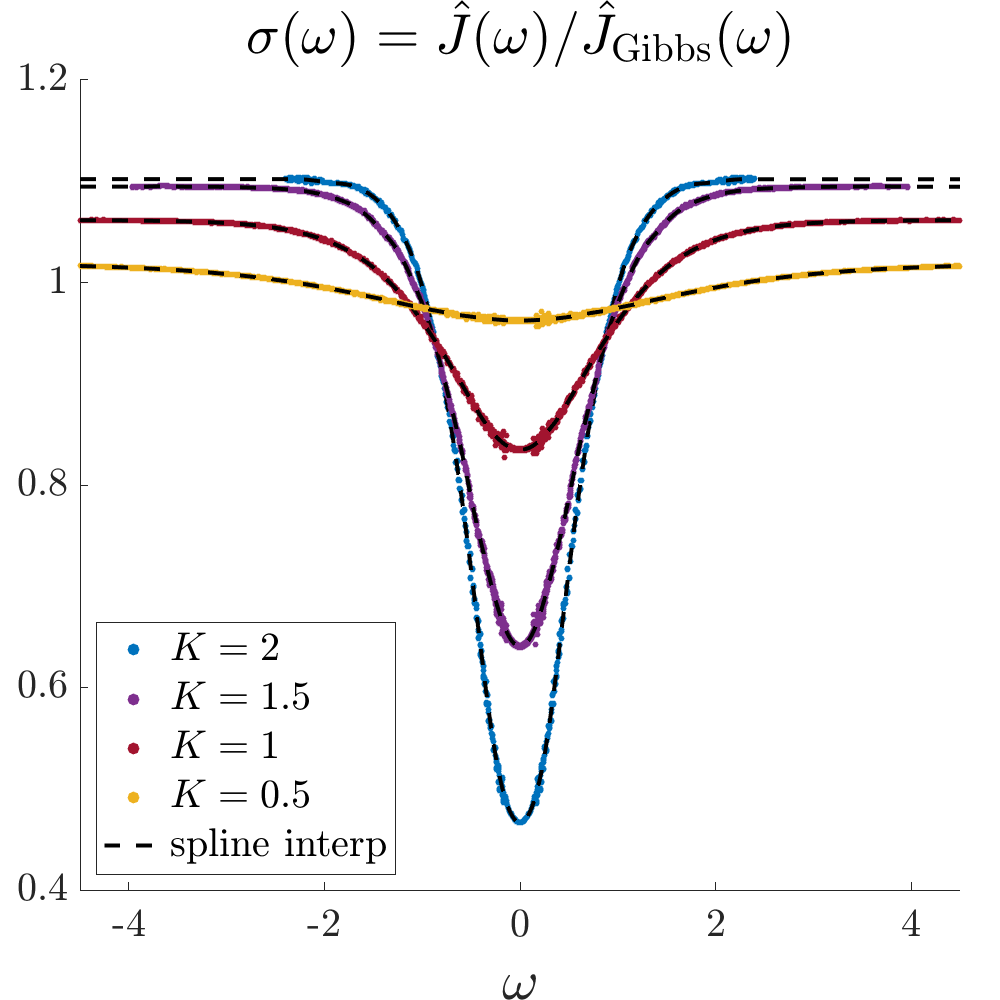}
\caption{Function $\sigma=\sigma_K$ for different values of $K$. The black dotted lines are the smoothing spline interpolations computed as in~\eqref{ab},~\eqref{spline}. Near $\omega=0$, we plot $(\omega, a+b\omega^2)$ rather than $(\omega_{\epsilon,\Delta}, \J_\Delta/\hatJgibbs(\omega_{\epsilon,\Delta}))$. Note that the functions $\sigma$ are all even, nondecreasing on $\R^{+}$, and bounded above and below by positive constants. Note also that $\sigma$ approaches the constant 1 as $K$ decreases.}
\label{fig:allUFOs}
\vspace{-0.5cm}
\end{figure}

There are two notable features of this family of corrections $\sigma=\sigma_K$. First, the corrections approach the constant 1 as $K$ decreases, in line with the observation in~\cite{gao2020} that the PDE derived under the local Gibbs assumption is very nearly accurate at low $K$. Second, we note that $\sigma$ is even, nondecreasing on $\R^{+}$, and bounded above and below by positive constants for all $K$. These qualitative observations will enable us to extend the analysis of the PDE~\eqref{introPDE-gibbs} done in~\cite{gao2020}, to the analysis of the corrected PDE~\eqref{introPDE}.

\subsection{Convergence to PDE Solution}\label{subsec:PDE-confirm}
We will take $K=2$ in our verification of the PDE, and the initial height profiles $h_0$ depicted in the first two columns of Figure~\ref{fig:init}. We numerically solve the PDE 
\beq\label{J-PDE}\begin{cases}
h_t = -\partial_x\l(\sigma(h_{xxx})2{e}^{\frac{-3K}{2}}\sinh(Kh_{xxx})\r),&\quad t>0,x\in (0,1)\\ h(0,x)=h_0(x),&\quad x\in (0,1)\end{cases},\eeq with $\sigma$ computed as described in Section~\ref{subsec:hatJ}. For comparison, we also numerically solve the PDE without the correction, letting $\tilde h$ denote the solution to
\beq\label{tilde-J-PDE}\begin{cases}
\tilde h_t = -\partial_x\l(2{e}^{\frac{-3K}{2}}\sinh(K\tilde h_{xxx})\r),&\quad t>0,x\in (0,1)\\ \tilde h(0,x)=h_0(x),&\quad x\in (0,1).\end{cases}\eeq  
We solved the PDEs by discretizing the spatial differential operators, and evolving the resulting ODE forward using MATLAB's \texttt{ode15s}, which is designed for stiff differential equations. Our primary interest is to confirm that the PDE~\eqref{J-PDE} is the correct limit of the microscopic dynamics. We will therefore study pointwise convergence of $\hN$ rather than the hydrodynamic convergence of Definition~\ref{def:hydro}. We will see that $$\E[h_N(t,x)]:=\E h_{\lfloor Nx\rfloor}(t)\to h(t,x),\quad N\to\infty,\;\forall \; (x,t),$$ \emph{with no spatial averaging required}. This is in stark contrast to the $\wN$ process, for which $\E w_{\lfloor Nx\rfloor}(t)$ does not converge at all. We will also confirm that $$\E\varbar w(t)\to h_{xxx}(t,x), \quad N\to\infty,\;\forall \; (x,t),$$where $\epsilon=\epsilon(N)$ is chosen using the procedure described in the verification of $\eE$ in Section~\ref{subsec:theory-numeric}. Note that $\eE$ only verified $\E\varbar w(t)$ has \emph{some} limit, whereas now we verify this limit is the third derivative of the solution to~\eqref{J-PDE}.

We start with the initial condition $h_0(x)=c(1-e^{-\sin(2\pi x)})$. The left panel of Figure~\ref{fig:exp-evoln} depicts the evolution of $\E [h_N(t,\cdot)]$ in time for $N=500$, as well as the evolution of $h(t,\cdot)$ and $\tilde h(t,\cdot)$, where $h$, $\tilde h$ are solutions to~\eqref{J-PDE} and~\eqref{tilde-J-PDE}, respectively. The right panel shows the evolution of $\E\varbar w(t)$ in comparison to $h_{xxx}$ and $\tilde h_{xxx}$. We see that there is a nontrivial qualitative difference between the two macroscopic evolutions, and that the evolution of the microscopic process clearly follows the PDE~\eqref{J-PDE} rather than the PDE~\eqref{tilde-J-PDE}. This shows that the correction $\sigma$ is necessary to capture the correct dynamics. 

The left panel of Figure~\ref{fig:exp-converge} depicts $\E[h_N(t,\cdot)-h_N(0,\cdot)]$ for $t=2\times 10^{-8}$ and $N=250,500,1000$, where the $N=\infty$ curve is $h(t,\cdot)-h_0$. We plot the time increment of $\E h_N$ rather than $\E h_N$ itself, in order to better see the convergence (at this $t$, the process $h_N(t)$ is still very close to $h_N(0)$). The right panel of the figure depicts $\E\varbar w(t)$ for increasing $N$, with $N=\infty$ representing $h_{xxx}(t,\cdot)$. The panels confirm that $\E[h_N(t,\cdot)]$ is converging to $h(t,\cdot)$, since $\E[h_N(0)]$ converges to $h_0$ by design, and that $\E\varbar w(t)$ is converging to $h_{xxx}(t,\cdot)$.
\begin{figure}
\begin{subfigure}[b]{\textwidth}
\includegraphics[width=\textwidth]{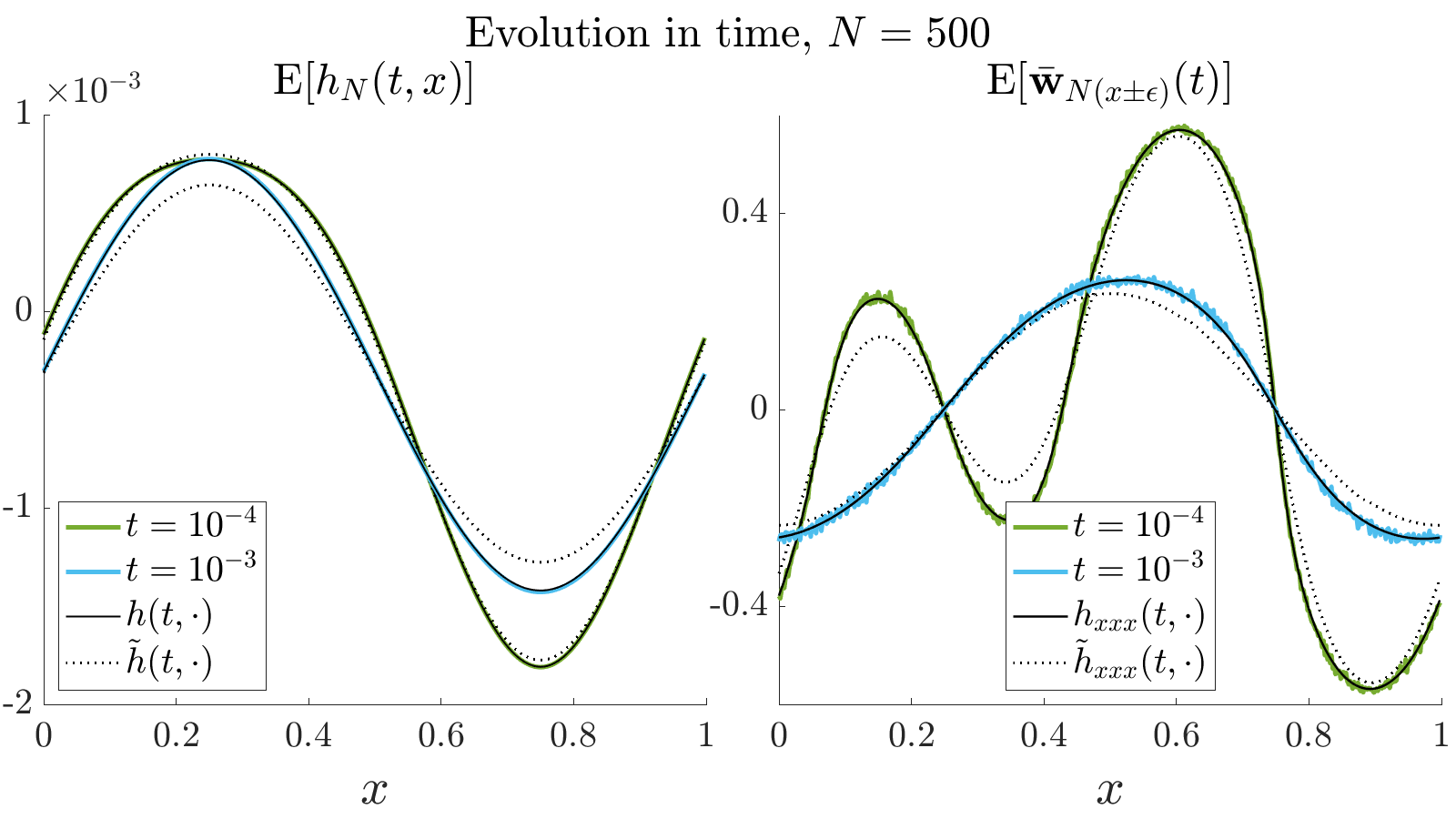}
\caption{Exponential initial condition.  Left: evolution of $\E h_N$ in time, for $N=500$. The evolution is compared to that of $ h(t,\cdot)$ and $\tilde h(t,\cdot)$. It is clear that the evolution of $\E h_N$ follows that of $h$ rather than $\tilde h$. Right: evolution of $\E\varbar w(t)$ in time for $N=500$, compared to the evolution of $h_{xxx}$ and $\tilde h_{xxx}$. We see that $\E\varbar w(t)$ follows $h_{xxx}$ rather than $\tilde h_{xxx}$. Note the qualitative difference between the two macroscopic profiles.}
\label{fig:exp-evoln}
\end{subfigure}
\begin{subfigure}[b]{\textwidth}
\includegraphics[width=\textwidth]{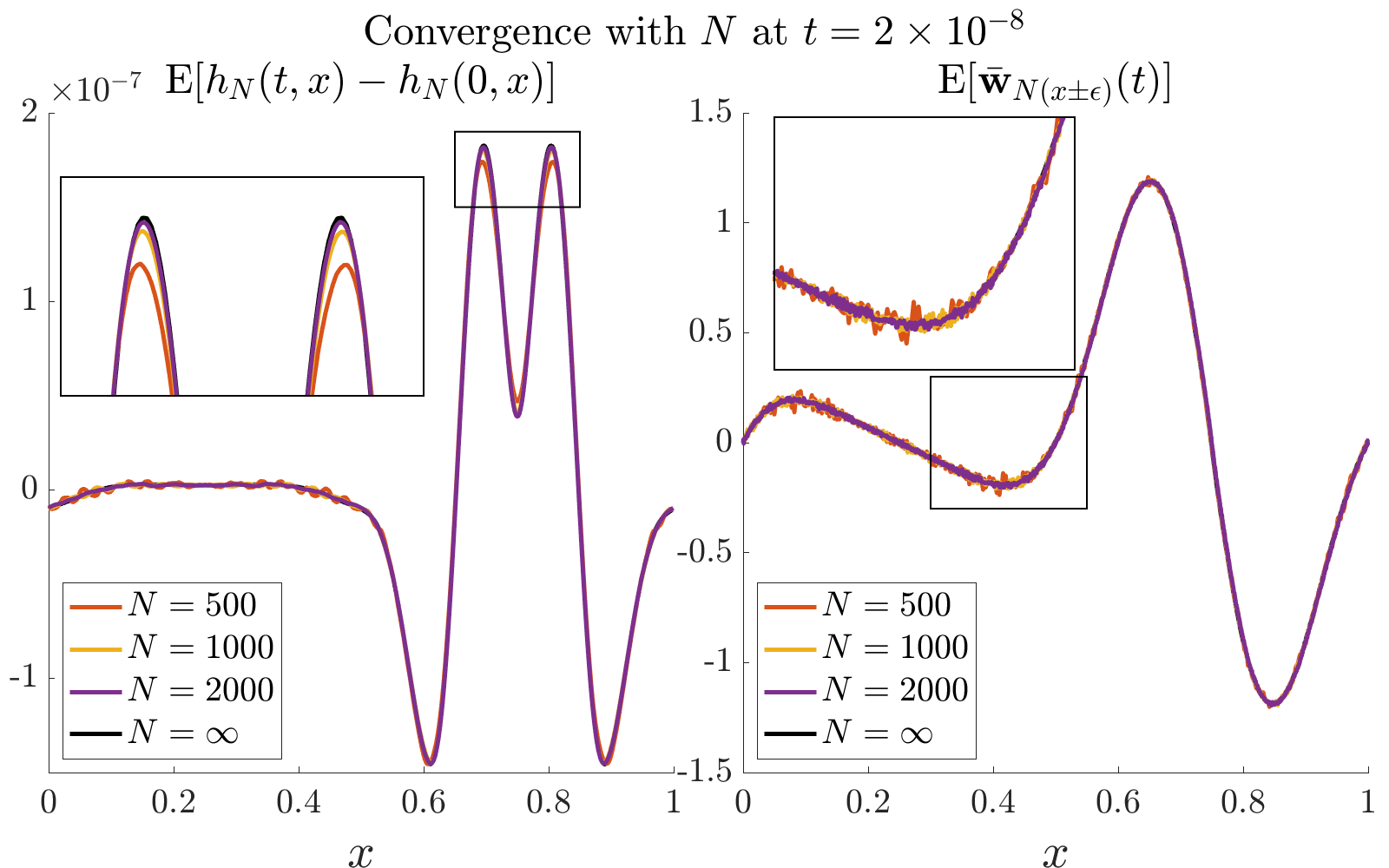}
\caption{Exponential initial condition. Convergence of $\E[h_N(t,\cdot)-h_N(0,\cdot)$ to $h(t,\cdot)-h_0$, and $\E\varbar w$ to $h_{xxx}$, as $N\to\infty$.}
\label{fig:exp-converge}
\end{subfigure}
\caption{}
\end{figure}

Figures~\ref{fig:sin-evoln} and~\ref{fig:sin-converge} are analogous, but for the sinusoidal initial condition. For this IC, the qualitative differences between the two macroscopic evolutions~\eqref{J-PDE} and~\eqref{tilde-J-PDE} are not as significant but again, it is clear that the microscopic process converges to the solution of~\eqref{J-PDE}.
\begin{figure}
\begin{subfigure}[b]{\textwidth}
\includegraphics[width=\textwidth]{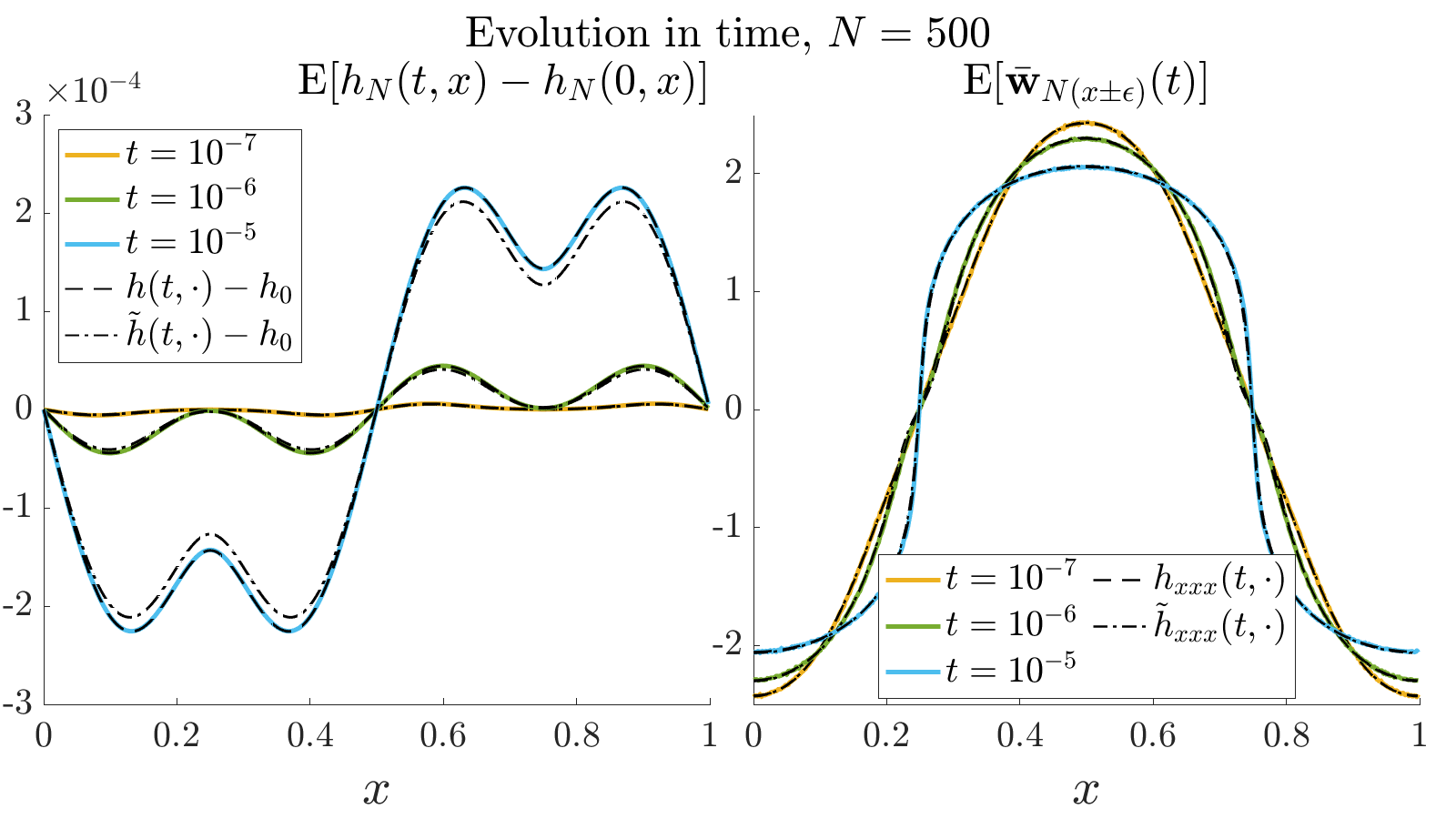}
\caption{Sinusoidal initial condition. Left: evolution of $\E[h_N(t,\cdot)-h_N(0,\cdot)]$ in time, for $N=500$. The evolution is compared to that of $h(t,\cdot)-h_0$ and $\tilde h(t,\cdot)-h_0$, where $h,\tilde h$ are the solutions to~\eqref{J-PDE} and~\eqref{tilde-J-PDE}, respectively. The discrepancy between $h(t,\cdot)-h_0$ and $\tilde h(t,\cdot)-h_0$ is clearest at $t=10^{-5}$. It is clear that $\E h_N$ follows $h$ rather than $\tilde h$. Right: evolution of $\E\varbar w(t)$ in time for $N=500$, compared to the evolution of $h_{xxx}$ and $\tilde h_{xxx}$. On the $O(1)$ scale of $h_{xxx}$, the difference between $h_{xxx}$ and $\tilde h_{xxx}$ is imperceptible.}
\label{fig:sin-evoln}
\end{subfigure}
\vspace{-5pt}
\begin{subfigure}[b]{\textwidth}
\includegraphics[width=\textwidth]{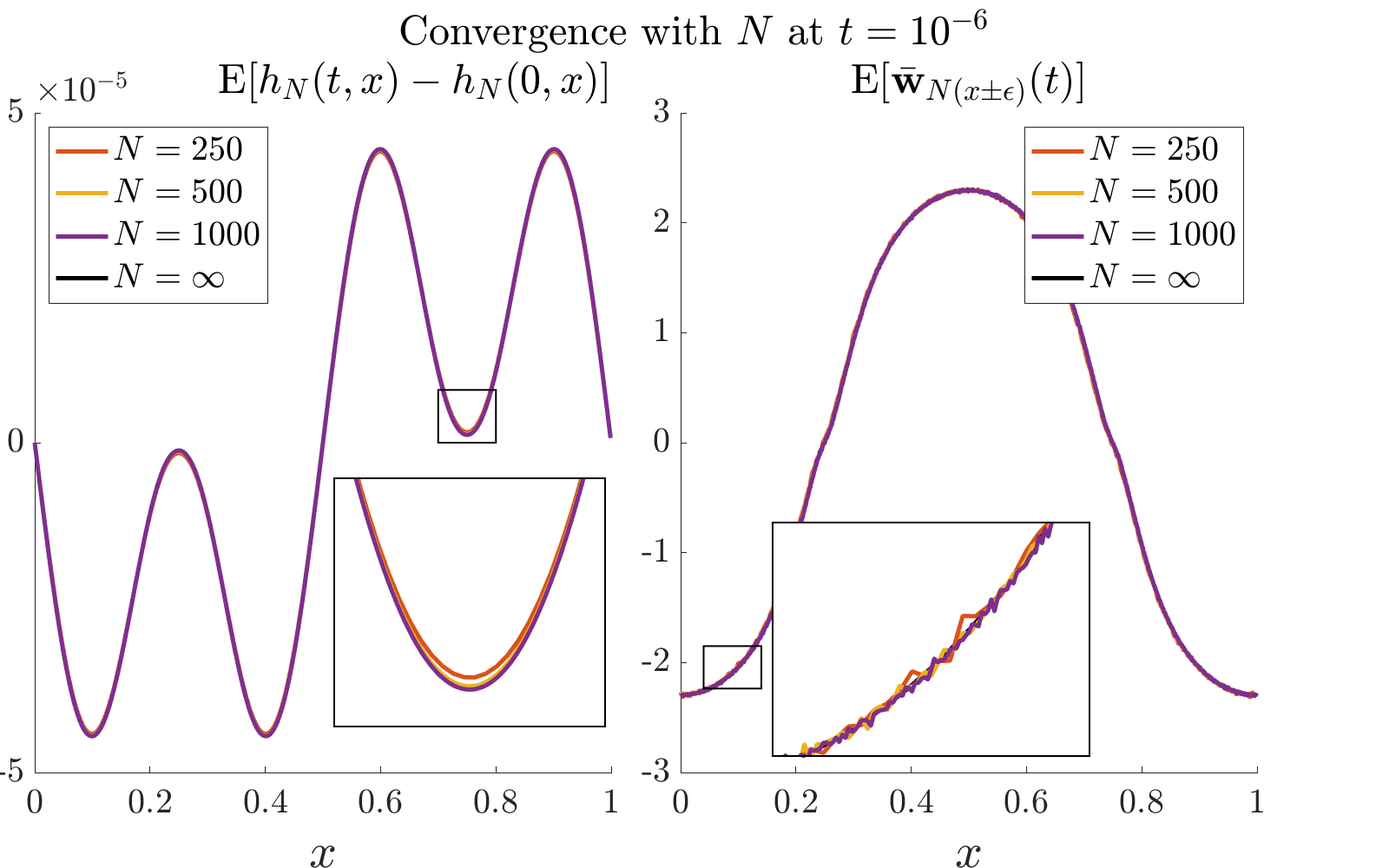}
\caption{Sinusoidal initial condition. Convergence of $\E[h_N(t,\cdot)-h_N(0,\cdot)]$ to $h(t,\cdot)-h_0$, and $\E\varbar w$ to $h_{xxx}$, as $N\to\infty$.}
\label{fig:sin-converge}
\end{subfigure}
\caption{}
\end{figure}
\section{PDE Analysis}\label{sec:PDE}
We conclude the paper by generalizing the PDE results in~\cite{gao2020}. Setting all constants equal to 1, consider the PDE
\beq\label{hPDE}
\begin{cases}
h_t = -\partial_{x}\l(\sigma(h_{xxx})\sinh(h_{xxx})\r),\quad &t>0, x\in\unit\\
h(0,x) = h_0(x),\quad &x\in\unit
\end{cases}
\eeq
where $\sigma\in C^2(\R)$, is even, nondecreasing on $\R^{+}$, and bounded above and below by constants $0<c<\sigma(\omega)<C$ for all $\omega\in\R$. These properties are all confirmed in Figure~\ref{fig:allUFOs}. From~\eqref{hPDE}, we get the following PDE for the slope $z=h_x$:
\beq\label{zPDE}
\begin{cases}
z_t = -\partial_{xx}\l(\sigma(z_{xx})\sinh(z_{xx})\r),\quad &t>0, x\in\unit\\
z(0,x) = z_0(x)=h_0'(x),\quad &x\in\unit
\end{cases}
\eeq
Before stating the main result, we introduce some notation. Let $$H=\{u\in L^2(\unit);\; \int_\unit udx=0\},\quad V=\{u\in H^2(\unit);\; \int_\unit udx=0\}.$$ Define $\psi:\R\to\R$ by
\beq
\psi(u) = c + \int_0^u\sigma(q)\sinh(q)dq,
\eeq 
and $\phi:H\to [0,+\infty]$ by
\beq\label{phi}
\phi(z) = \begin{cases}\int_\unit \psi(z_{xx})dx,\quad &z\in V,\\
+\infty,\quad&\text{otherwise}.\end{cases}
\eeq Note that we have $$\frac{\delta\phi}{\delta z} = \l(\psi'(z_{xx})\r)_{xx}= \partial_{xx}\l(\sigma(z_{xx})\sinh(z_{xx})\r)$$ so that~\eqref{zPDE} can be written as
$$z_t = -\frac{\delta\phi}{\delta z}.$$ This motivates writing solutions of~\eqref{zPDE} as the limit of a discretized gradient flow in the metric space $H$ with $L^2$ distance. 

In preparation for doing so, we state the following lemma. It is the same as Proposition 3.2 in~\cite{gao2020}, but applies to the more general functional $\phi$ in~\eqref{phi}:
\begin{lemma}\label{lma:phi-prop}
The functional $\phi:H\to[0,\infty]$ is $\lambda$-convex for $\lambda = c/\kappa^2$, where $\kappa$ is the best Poincare constant for the domain $\unit$. $\phi$ is also proper, lower semicontinuous in $H$, and satisfies coercivity, meaning that there exists a ball $B(u^*,r^*)=\{v\in H: \|v-u^*\|_{L^2}\leq r^*\}$ such that $\phi(u^*)<\infty$ and the infimum of $\phi$ over $B(u^*,r^*)$ is finite. 
\end{lemma}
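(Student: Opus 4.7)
The plan is to reduce all four properties to a single strong convexity estimate on the one-dimensional integrand $\psi$. First I would compute
\beq\label{psi-second-plan}
\psi''(u) = \sigma'(u)\sinh(u) + \sigma(u)\cosh(u),
\eeq
and show that it is bounded below by $c$. The second summand satisfies $\sigma(u)\cosh(u) \geq c$ since $\sigma\geq c$ and $\cosh\geq 1$. For the first summand, evenness of $\sigma$ makes $\sigma'$ odd, and monotonicity of $\sigma$ on $\R^+$ forces $\sigma'(u)$ to have the same sign as $u$, and hence as $\sinh(u)$; so $\sigma'(u)\sinh(u)\geq 0$. Thus $\psi''\geq c$, i.e.\ $\psi$ is $c$-strongly convex. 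This is the main observation; the only conceptual obstacle is recognizing that the qualitative features of $\sigma$ observed numerically in Figure~\ref{fig:allUFOs} (evenness, monotonicity on the positive axis, strict positivity) are precisely what is needed for this bound.

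For $\lambda$-convexity of $\phi$, the pointwise strong convexity estimate $\psi(tu+(1-t)v)\leq t\psi(u)+(1-t)\psi(v)-\tfrac{c}{2}t(1-t)(u-v)^2$ applied with $u=z_{xx}$, $v=w_{xx}$ for $z,w\in V$ integrates to
\beq
\phi(tz+(1-t)w)\leq t\phi(z)+(1-t)\phi(w)-\frac{c}{2}t(1-t)\|z_{xx}-w_{xx}\|_{L^2}^2.
\eeq
Applying the Poincar\'e inequality on $V$ in the form $\|z-w\|_{L^2}\leq \kappa\|z_{xx}-w_{xx}\|_{L^2}$ then gives $\lambda$-convexity with $\lambda = c/\kappa^2$. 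If either $z$ or $w$ lies outside $V$, the inequality is trivial since the right-hand side is $+\infty$. For properness and coercivity, observe that $\psi'(0)=\sigma(0)\sinh(0)=0$ and $\psi(0)=c$, so strong convexity yields $\psi(u)\geq c+\tfrac{c}{2}u^2\geq c$. Hence $\phi(0)=c<\infty$ (so $\phi$ is proper) and $\phi\geq c$ on all of $H$. Taking $u^*=0$ and any $r^*>0$ then gives $\phi(u^*)<\infty$ and $\inf_{B(u^*,r^*)}\phi\geq c>-\infty$, establishing coercivity trivially.

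For lower semicontinuity on $H$, let $z_n\to z$ in $L^2$. If $\liminf\phi(z_n)=+\infty$ the bound is trivial, so assume it is finite and pass to a subsequence along which $\phi(z_{n_k})\to\liminf\phi(z_n)$ and $\phi(z_{n_k})\leq M$ for all $k$. The bound $\phi(\cdot)\geq c+\tfrac{c}{2}\|(\cdot)_{xx}\|_{L^2}^2$ then gives $\|z_{n_k,xx}\|_{L^2}^2\leq 2(M-c)/c$, so $(z_{n_k})$ is bounded in $H^2(\unit)$. A further subsequence $z_{n_{k_j}}$ converges weakly in $H^2$, necessarily to the $L^2$ limit $z$, so $z\in V$ and $z_{n_{k_j},xx}\rightharpoonup z_{xx}$ weakly in $L^2$. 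Weak $L^2$ lower semicontinuity of the convex integral functional $u\mapsto\int_\unit \psi(u)\,dx$ then yields $\phi(z)\leq\liminf_j\phi(z_{n_{k_j}})=\liminf_n\phi(z_n)$, completing the proof along the same template as Proposition 3.2 of~\cite{gao2020}.
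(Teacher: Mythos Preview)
Your proof is correct and follows essentially the same route as the paper: the key step $\psi''\geq c$ via the sign analysis of $\sigma'(u)\sinh(u)$, then $\lambda$-convexity via Poincar\'e, then the $H^2$ coercivity bound feeding into a weak-compactness argument for lower semicontinuity. Your derivation of the bound $\phi(z)\geq c+\tfrac{c}{2}\|z_{xx}\|_{L^2}^2$ directly from strong convexity of $\psi$ is in fact a bit cleaner than the paper's chain through $\psi(u)\geq c\cosh(u)$, and you spell out the lower semicontinuity argument that the paper defers to~\cite{gao2020}.
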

See the end of this section for the proof. Now, define the proximal operator
$$\J_\tau[u] = \arg\min_{v\in H}\l\{\phi(v) + \frac{1}{2\tau}\|v-u\|^2\r\}.$$ The proximal operator is the variational formulation of the update for gradient descent on $\phi$ with step size $\tau$. The convexity and lower semicontinuity of $\phi$ ensures that the minimizer of the above objective exists and is unique. Using $\J_\tau$, we form the approximate solution
$$z_n(t) := \l(\J_{t/n}\r)^n[z_0].$$
Using Lemma~\ref{lma:phi-prop} and the theory of gradient flows in metric spaces (see~\cite{gao2020} and the citations therein, in particular~\cite{AGS}), one can show that given $z_0\in H$, the sequence $z_n(t)$ converges in $H$ to $z(t)$, which is the unique evolution variational inequality (EVI) solution to the PDE~\eqref{zPDE}. See~\cite{gao2020} and~\cite{AGS} for the definition of the EVI solution. Finally, if $h_0$ enjoys more regularity, then the EVI solution $z(t)$ is a strong solution. We have the following theorem, which is analogous to Theorem 3.6 in~\cite{gao2020}.
\begin{theorem}
Let $$D = \{z\in V\mid \l(\sigma(z_{xx})\sinh(z_{xx})\r)_{xx}\in H\}.$$ Take $T>0$ and $z_0\in D$ such that $\phi(z_0)<\infty$. Then~\eqref{zPDE} has a unique global strong solution $z$ in the sense that $\partial_tz = -\partial_{xx}\l(\sigma(z_{xx})\sinh(z_{xx})\r)$ for all $t\geq0$, and such that
$$z\in C([0,T]; D)\cap C_1([0,T]; H).$$ Moreover, we have the following decay:
\beqs
\|z(t)\|_{L^2}&\leq \|z_0\|_{L^2}\;\forall t\geq 0,\\
 \|\partial_tz(t)\|_{L^2} &\leq e^{-\lambda t}\l\|\partial_{xx}\l(\sigma\l(\partial_{xx}z_0\r)\sinh(\partial_{xx}z_0)\r)\r\|_{L^2}\forall t\geq0,
\eeqs
where $\lambda$ is as in Lemma~\ref{lma:phi-prop}.
\end{theorem}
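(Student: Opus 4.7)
The plan is to realize~\eqref{zPDE} as a gradient flow of $\phi$ in the Hilbert space $H$ and to invoke the Ambrosio-Gigli-Savar\'e (AGS) theory~\cite{AGS} for $\lambda$-convex functionals. By Lemma~\ref{lma:phi-prop}, $\phi$ is $\lambda$-convex with $\lambda = c/\kappa^2 > 0$, proper, lower semicontinuous, and coercive on $H$. These hypotheses guarantee, for every $z_0 \in \overline{\mathrm{dom}(\phi)}$, the existence and uniqueness of a global EVI solution $z \in C([0,\infty); H)$ starting at $z_0$, which agrees with the limit $\lim_{n\to\infty}(\mathcal J_{t/n})^n[z_0]$ of the discretized proximal scheme. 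Uniqueness of the strong solution thus follows immediately from uniqueness at the EVI level.

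To upgrade the abstract EVI solution to a classical strong solution when $z_0 \in D$ with $\phi(z_0) < \infty$, the key step is to identify the Fr\'echet subdifferential of $\phi$. I would show by a direct computation, using $\psi'(q) = \sigma(q)\sinh(q)$ and two integrations by parts over $\unit$ (legitimate thanks to the periodic boundary and $\sigma \in C^2(\R)$), that for every $z \in D$,
\begin{equation*}
\partial\phi(z) = \{(\sigma(z_{xx})\sinh(z_{xx}))_{xx}\} \subset H,
\end{equation*}
and that conversely $\mathrm{dom}(\partial\phi) = D$. The AGS regularity theorem then yields that $z(t) \in D$ for every $t \geq 0$, that $z$ is locally Lipschitz from $[0,T]$ into $H$ with $\partial_t z(t) = -(\sigma(z_{xx})\sinh(z_{xx}))_{xx}$ for almost every $t$, and, combined with the continuity of $q \mapsto \sigma(q)\sinh(q)$ and the ensuing time continuity of $\partial_t z$, that $z \in C([0,T]; D) \cap C^1([0,T]; H)$.

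For the first decay bound, I would plug $u = 0$ into the defining EVI inequality
\begin{equation*}
\tfrac{1}{2}\tfrac{d}{dt}\|z(t) - u\|_{L^2}^2 + \tfrac{\lambda}{2}\|z(t) - u\|_{L^2}^2 \leq \phi(u) - \phi(z(t)).
\end{equation*}
Since $\sigma(q)\sinh(q)$ has the same sign as $q$, we have $\psi(q) \geq \psi(0) = c$ for all $q$, so $\phi(0) = c \leq \phi(z(t))$, the right-hand side is nonpositive, and $\|z(t)\|_{L^2}$ is nonincreasing. For the second bound, I would invoke the AGS result that the local slope satisfies $|\nabla\phi|(z(t)) \leq e^{-\lambda t}|\nabla\phi|(z_0)$ along any $\lambda$-convex gradient flow, together with the identification $|\nabla\phi|(z(t)) = \|(\sigma(z_{xx})\sinh(z_{xx}))_{xx}\|_{L^2} = \|\partial_t z(t)\|_{L^2}$ obtained from the subdifferential calculation.

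The main obstacle is the subdifferential identification: one must verify that $\partial\phi$ has the claimed form in the strong (rather than purely variational) sense and that its effective domain is precisely $D$. The computation parallels the one in~\cite{gao2020} for $\sigma \equiv 1$, but the nonconstant $\sigma$ forces a slightly more involved chain rule when differentiating $\psi(z_{xx})$; the hypotheses $\sigma \in C^2(\R)$ and $0 < c \leq \sigma \leq C$ make this chain rule and the subsequent $L^2$ estimates routine. Once this identification is in place, the remainder of the proof is a mechanical application of the AGS framework.
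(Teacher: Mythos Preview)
Your proposal is correct and follows essentially the same route as the paper. The paper does not give a standalone proof of this theorem; it simply notes that the result is analogous to Theorem~3.6 in~\cite{gao2020} and follows from Lemma~\ref{lma:phi-prop} together with the AGS gradient-flow theory, which is precisely the framework you invoke (EVI existence/uniqueness, subdifferential identification $\partial\phi(z)=\{(\sigma(z_{xx})\sinh(z_{xx}))_{xx}\}$, and the standard slope-decay estimate $|\nabla\phi|(z(t))\le e^{-\lambda t}|\nabla\phi|(z_0)$). Your write-up in fact supplies more detail than the paper itself.
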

Let us now present the proof of Lemma~\ref{lma:phi-prop}.
\begin{proof}[Proof of Lemma~\ref{lma:phi-prop}]
$\phi$ is proper since $u=0$ satisfies $\phi(u)<\infty$, so $\{\phi<\infty\}$ is nonempty. Since $\phi\geq0$, it is obviously coercive. Now we show $\phi$ is $\lambda$-convex with $\lambda = c/\kappa^2$, where $\kappa$ is the best Poincare constant for the domain $\unit$. First, note that $$\psi''(w) = \sigma(w)\cosh(w) + \sigma'(w)\sinh(w) \geq c\cosh(w)\geq c.$$ Now, analogously to~\cite{gao2020}, define
\beq
I(t) = \int_\unit (1-t)\psi(u_{xx}) + t\psi(v_{xx}) - \frac\lambda2t(1-t)\|u-v\|_{L^2}-\psi((1-t)u_{xx}+tv_{xx})dx.
\eeq Note that $I(0)=I(1)=0$, so $I(t)\geq0$ provided $I''(t)\leq 0$. We compute $I''(t)$ below, substituting $\lambda=c/\kappa^2$ in the second line:
\beqs
I''(t) &= \lambda\int_\unit(u-v)^2dx -\int_\unit(u_{xx}-v_{xx})^2\psi''((1-t)u_{xx}+tv_{xx})dx \\
&\leq  \frac{c}{\kappa^2}\int_\unit(u-v)^2dx - c\int_\unit(u_{xx}-v_{xx})^2dx\leq 0,
\eeqs
applying the Poincare inequality twice. Hence $\phi$ is $\lambda$-convex. The lower semi-continuity of $\phi$ will follow from the convexity of $\psi$ and the below bound~\eqref{L2bd}; for the details, see~\cite{gao2020}.  For $z\in V$, we have
\beqs
\frac c2\int_\unit\l|(z_{xx})^+\r|^2dx &\leq c\int_{\unit\cap \{z_{xx}>0\}} e^{(z_{xx})^+}dx \leq 2c\int_{\unit\cap\{z_{xx}>0\}}\cosh((z_{xx})^+)dx \\
&\leq 2c\int_{\unit}\cosh(z_{xx})dx\leq 2\phi(z).
\eeqs
Applying an analogous inequality with the negative part of $z_{xx}$, we conclude that
\beq\label{L2bd}
\|z_{xx}\|_{L^2}^2 \leq \frac8c\phi(z).
\eeq
\end{proof}

\section{Conclusion}\label{sec:conclude}
We have derived the continuity equation $h_t = -\partial_x\hat J(h_{xxx})$ governing the hydrodynamic limit of a Metropolis rate jump process in the rough scaling regime. Due to the surprising fact that the local equilibrium (LE) state of this process is not local Gibbs, and is unknown, we opted for a numerical approach to compute the current $\hat J$. We conclude with an observation about this approach. Although it took into account some specific properties of the model, the basic principle underlying our approach is quite general. Namely, if a system is in LE, then the expectations of a local nonlinear observable $f$ in different mesoscopic regions depend in the same way (through a universal function $\hat f$) on a finite number of usually linear statistics in these regions. In our case, this statistic is the first moment $\E\bar w_{\idxsetx}$, which is essentially the local value of $h_{xxx}$. We can infer the function $\hat f$ by plotting the $f$ expectations against the linear statistics, collected from sample runs of the process. We believe our numerical approach can be useful to derive the PDE limit of interacting particle systems in which an explicit expression for the LE distribution is not available, provided the PDE derivation reduces to computing the expectation of an observable $f$ in LE.
\appendix
\section{Proofs of Claims in Section~\ref{subsec:theory}}\label{app:claims}
The proof of Claim~\ref{claim:htow} relies on the following lemma.
\begin{lemma}\label{app:lemma:htow}
Under the conditions of the claim, there exists $R(t)>0$ such that 
$$\p\l(\frac1N\sum_{i=1}^NN^{-3}|h_i(t)|>R(t)\r)\to0,\quad N\to\infty.$$ \end{lemma}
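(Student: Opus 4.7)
The plan is to split $h_i(t)$ into its lattice mean and a mean-zero fluctuation, use mass conservation for the mean, and reconstruct the fluctuation from $\wN$ via Green's functions of the discrete difference operators on $\Z_N$.

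\textbf{Step 1 (mean part).} Each Metropolis transition $\bh\mapsto\bh^{i,j}$ preserves $\sum_k h_k$, so $\bar h_N(t):=N^{-1}\sum_i h_i(t)$ is pathwise equal to $\bar h_N(0)$. By the hypothesis $\bar h_N(0)\probto M$, we obtain $N^{-3}|\bar h_N(t)|\probto 0$, hence the mean contributes negligibly to $\tfrac1N\sum_i N^{-3}|h_i(t)|$.

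\textbf{Step 2 (Green's function reconstruction).} Set $\tilde h_i(t)=h_i(t)-\bar h_N(t)$, so $\sum_i\tilde h_i=0$, and note $\sum_i z_i=0$ by periodicity. On the mean-zero subspace of $\R^{\Z_N}$, both the forward difference $Du_i:=u_{i+1}-u_i$ and the symmetric Laplacian $\Delta u_i:=u_{i-1}-2u_i+u_{i+1}$ are invertible, and $\tilde h=D^{-1}\Delta^{-1}w$. Denoting the corresponding Green's kernels $G^D,G^\Delta$, Fourier computations on the torus give the uniform bounds $|G^D_{ij}|\leq 1$ and $|G^\Delta_{ij}|\leq CN$; the latter follows from $(4\sin^2(\pi k/N))^{-1}\leq CN^2/k^2$ for $1\leq k\leq N-1$, combined with $\sum_k k^{-2}<\infty$. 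Chaining these estimates yields $\sum_k|(G^DG^\Delta)_{ik}|\leq CN^3$ for each $i$.

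\textbf{Step 3 (conclusion).} Combining with \eqref{bd1},
\beqsn
\E|\tilde h_i(t)|\leq \sum_k |(G^D G^\Delta)_{ik}|\,\E|w_k(t)|\leq CN^3\cdot\sup_{N,k}\E|w_k(t)|\leq C(t)N^3,
\eeqsn
so $\E\bigl[\tfrac1N\sum_i N^{-3}|\tilde h_i(t)|\bigr]\leq C(t)$ uniformly in $N$, and Markov gives $\p\bigl(\tfrac1N\sum_i N^{-3}|\tilde h_i(t)|>R\bigr)\leq C(t)/R$. Combined with Step 1, this establishes tightness of $\tfrac1N\sum_i N^{-3}|h_i(t)|$. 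To upgrade tightness to $\p(\cdot>R(t))\to 0$ for a fixed $R(t)$, one further invokes the hypothesized hydrodynamic convergence of $\wN$ to a continuous $w$: together with the Green's function reconstruction above, this identifies the in-probability limit of $\tfrac1N\sum_i N^{-3}|h_i(t)|$ with $\int_\unit|h(t,x)|dx$, and then any $R(t)$ strictly exceeding this value suffices.

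\textbf{Main obstacle.} The technical crux is the Green's function bound $|G^\Delta_{ij}|\leq CN$ on the torus; the rest is routine bookkeeping. The more conceptual step is the identification of the limit in the last sentence, which relies on both the reconstruction and the continuity of $w$.
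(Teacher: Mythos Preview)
Your Green's function approach is correct and parallels the paper's method closely. The paper writes $h_i$ explicitly as a triple cumulative sum of the $w_\ell$'s plus a quadratic polynomial $a_Ni^2+b_Ni+c_N$, then solves for $a_N,b_N$ from the periodicity constraints and for $c_N$ from mass conservation, bounding each piece via~\eqref{bd1}. Your decomposition into lattice mean plus mean-zero fluctuation, followed by the kernel bounds $|G^D_{ij}|=O(1)$ and $|G^\Delta_{ij}|=O(N)$, is the Fourier-side version of exactly the same reconstruction, and the resulting $\E|\tilde h_i|\leq C(t)N^3$ is equivalent to the paper's triple-sum estimate.

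Two small points. First, in Step~1 you claim $N^{-3}|\bar h_N(t)|\probto 0$; in fact the hypothesis (reading $N^{-1}\sum h_i(0)$ as $N^{-4}\sum h_i(0)$, as the paper uses it) gives $N^{-3}\bar h_N(t)\probto M$, which need not vanish. This is harmless: boundedness in probability is all that is required. Second, both your argument and the paper's actually establish only tightness of $\tfrac1N\sum_iN^{-3}|h_i(t)|$, not the literal statement that $\p(\cdot>R(t))\to0$ for one fixed $R(t)$. You are right to flag this, but your proposed upgrade via identifying the in-probability limit with $\int|h(t,x)|dx$ is both unnecessary and close to circular: making it rigorous would require essentially the pointwise reconstruction that Claim~\ref{claim:htow} itself proves using the lemma. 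The resolution is simpler: in the only place the lemma is invoked, the threshold is $\delta/C_N$ with $C_N\to0$, and tightness alone already gives $\p(\cdot>\delta/C_N)\to0$ whenever the threshold diverges. So your Steps~1--3 through the Markov bound already deliver exactly what is needed.
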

\begin{proof} Let $C(t)=\sup_{N}\max_{i=1,\dots,N}\E|w_i(t)|$, which is finite thanks to~\eqref{bd1}. Let $h_i=h_i(t)$, $w_i=w_i(t)$. Write
\beq\label{hfromw}
h_i = \sum_{j=0}^{i-2}\sum_{k=0}^j\sum_{\ell=0}^kw_\ell + a_Ni^2 + b_Ni + c_N,\quad i=0,\dots, N-1,
\eeq
and note that we have the bound
\beqs
N^{-4}\sum_{i=1}^N|h_i| &\leq DN^{-4}\l[N^3\sum_{i=1}^N|w_i| + N^3|a_N| + N^2|b_N| + N|c_N|\r]\\
&= D\l[N^{-1}\sum_{i=1}^N|w_i| + N^{-1}|a_N| + N^{-2}|b_N| + N^{-3}|c_N|\r]\eeqs
for some constant $D$. Thus it suffices to show there exists a constant $R(t)$ such that $\p(N^{-1}\sum_{i=1}^N|w_i|>R(t)/4)$, $\p(N^{-1}|a_N|>R(t)/4)$, $\p(N^{-2}|b_N|>R(t)/4)$, and $\p(N^{-3}|c_N|>R(t)/4)$ all go to zero as $N\to\infty$. The first probability goes to zero by~\eqref{bd1}.

We will now solve for $a_N$, $b_N$, $c_N$. Note that taking $h_i$ as in~\eqref{hfromw}, we immediately get that $w_i=h_{i+2}-3h_{i+1}-3h_i + h_{i-1}$ for $i=1,\dots, N-3$, but we must also ensure that $w_i=h_{i+2}-3h_{i+1}-3h_i + h_{i-1}$ for $i=0,N-2,N-1$. This is equivalent to extending the definition of $h_i$ to $i=N, N+1, N+2$ and setting $h_0 =h_N$, $h_1=h_{N+1}$, $h_2=h_{N+2}$. One can show that the equality $h_2=h_{N+2}$ will follow from the other two equalities. Setting $h_0$ equal to $h_N$ gives
$$c_N = S_{N,3} + N^2a_N + Nb_N + c_N,\quad S_{N,3} = \sum_{j=0}^{N-2}\sum_{k=0}^j\sum_{\ell=0}^kw_\ell.$$ Setting $h_1$ equal to $h_{N+1}$ gives 
$$a_N + b_N + c_N = S_{N,3} + S_{N,2} + (N+1)^2a_N + (N+1)b_N + c_N,\quad S_{N,2} = \sum_{k=0}^{N-1}\sum_{\ell=0}^k w_\ell.$$
These two equations give $a_N = -S_{N,2}/2N$ and $b_N = S_{N,2} /2 - S_{N,3}/N$. It is straightforward to see that we have the bound $\E|S_{N,2}| \leq DC(t)N^2$ for some constant $D$, so $N^{-1}\E|a_N|= N^{-2}\E|S_{N,2}|/2 \leq DC(t)$. A similar argument gives $N^{-2}\E|b_N|\leq DC(t)$. 
We can estimate $N^{-3}|c_N|$ by recalling that $N^{-4}\sum_ih_i=:M_N$ converges to $M$ in probability. This gives
$$N^4M_N = S_{N,4} + a_N\sum_{i=0}^{N-1}i^2 +  b_N\sum_{i=0}^{N-1}i  + Nc_N,\quad S_{N,4} = \sum_{i=0}^{N-1}\sum_{j=0}^{i-2}\sum_{k=0}^j\sum_{\ell=0}^kw_\ell$$ so that
 $$N^{-3}|c_N|\leq |M_N| + N^{-4}|S_{N,4}| + DN^{-1}|a_N| + DN^{-2}|b_N|.$$ The first summand on the right is bounded in probability, and the second, thirds, and fourth summands are bounded in expectation, so it follows that $N^{-3}|c_N|$ is bounded in probability. 
\end{proof}
\begin{proof}[Proof of Claim~\ref{claim:htow}]
Let us show a unique function $h=h(t,\cdot)$ exists such that $\int_0^1hdx=M$, $h_{xxx}=w$, and $h,h_x,h_{xx}$ are all periodic. Such a function necessarily takes the form $$h(t ,x) = \int_0^x\int_0^y\int_0^zw(t,u)dudzdy + a(t)x^2 + b(t)x + c(t),$$ so we show there is a unique choice of $a(t),b(t),c(t)$. First note that by definition of $\wN$ as the third order FD of some process, we have $\sum_i w_i(t) =0$ for all $t$ (recall that lattice site indexing is periodic). Therefore, taking $\phi\equiv 1$, we get that $\int w(t,x)dx=0$ for all $t$. Now, we have $h_{xx} = \int_0^xw(t,u)du + 2a(t)$, which is periodic for any $a(t)$, since $\int w(t,x)dx=0$. Equating $h(t,0)$ and $h(t,1)$, we get the condition
$$c(t) =h(t,0)=h(t,1)= \int_0^1\int_0^y\int_0^zw(t,u)dudzdy + a(t) + b(t)+c(t).$$ Equating $h_x(t,0)$ with $h_x(t,1)$, we get the condition
$$b(t) = h_x(t,0)=h_x(t,1)=\int_0^1\int_0^zw(t,u)dudz + 2a(t) + b(t).$$ Finally, integrating $h$, we get the condition
$$M = \int_0^1h(x)dx=\int_0^1\int_0^x\int_0^y\int_0^zw(t,u)dudzdydx + a(t)/3 + b(t)/2+c(t).$$ It is clear that this system of equations has a unique solution $a(t),b(t),c(t)$, so a unique $h$ satisfying the conditions exists. Now, we need to show that for all $\phi\in\cts$, we have \beq\label{needtoshow}N^{-1}\sum_i\phi(i/N)N^{-3}h_i(t)\probto\int_0^1\phi(x)h(t,x)dx\eeq for this $h$. Since $\sum_ih_i(t)$ stays fixed under the crystal surface dynamics, we already know this is true for $\phi\equiv 1$. Indeed, we have $$\frac1N\sum_{i=1}^NN^{-3}h_i(t) = \frac1N\sum_{i=1}^NN^{-3}h_i(0) \to M=\int_0^1h(t,x)dx.$$ Thus, it suffices to show~\eqref{needtoshow} for continuous $\phi$ which integrate to $0$. For such a $\phi$, there exists a $C^3$, periodic $\psi$ such that $\psi'''=\phi$ and $\psi'$, $\psi''$ are also periodic. This is true by the same argument as above. Now, let $\psi_i = \psi(i/N)$, and
\beq\psi_i^1= \psi_{i-1}-\psi_{i-2},\quad\psi_i^2 = \psi_{i+1}^1 - \psi_i^1,\quad \psi_i^3 = \psi_{i+1}^2 - \psi_i^2.\eeq
Note that by continuity of $\phi=\psi'''$, 
$$C_N :=  \max_i\l|\psi'''(i/N) - N^3\psi_i^3\r| \to0,\quad N\to\infty.$$ We then have
\beqs\label{by-parts}\bigg|\frac1N\sum_{i=1}^N&\psi'''(i/N)N^{-3}h_i-\frac1N\sum_{i=1}^N\psi_i^3h_i\bigg|
\\&\leq C_N \frac1N\sum_{i=1}^NN^{-3}|h_i|,\eeqs omitting the $t$ for brevity. Therefore,
\beqs
\p\bigg(\bigg|\frac1N\sum_{i=1}^N\psi'''&(i/N)N^{-3}h_i-\frac1N\sum_{i=1}^N\psi_i^3h_i\bigg|>\delta\bigg)\\
&\leq \p\bigg(\frac1N\sum_{i=1}^NN^{-3}|h_i|>\delta/C_N\bigg)\to0,\quad N\to\infty,\eeqs using the Lemma. Thus it suffices to prove $\frac1N\sum_{i=1}^N\psi_i^3h_i$ converges in probability to $\int\phi(x)h(t,x)dx$. Define 
\beq h_i^1 = h_i - h_{i-1},\quad h_i^2 = h_i^1 - h_{i-1}^1,\quad h_i^3 = h_i^2 - h_{i-1}^2\eeq  Note that $h_i^3= h_{i}-3h_{i-1} + 3h_{i-2}-h_{i-3} = w_{i-2}.$ Now, for arbitrary $N$-periodic sequences $\{f_k\}_{k\in\Z}$, $\{g_k\}_{k\in\Z}$, we have by the summation by parts formula,
$$\sum_{k=1}^{N}f_k(g_{k+1}-g_k) = f_{N}g_{N+1} - f_1g_1 - \sum_{k=2}^{N}g_k(f_k - f_{k-1}) = -\sum_{k=1}^{N}g_k(f_k - f_{k-1}),$$ so there are no boundary terms thanks to the periodicity. We now apply summation by parts three times to get
\beqs
\sum_{i=1}^N\psi_i^3h_i &=  -\sum \psi_i^2h_i^1 = \sum_{i=1}^N\psi_i^1h_i^2 = \sum_{i=1}^N(\psi_{i-1}-\psi_{i-2})h_i^2 \\
&= -\sum_{i=1}^N\psi_{i-2}h_i^3 = -\sum_{i=1}^N\psi_{i-2}w_{i-2} = -\sum_{i=1}^N\psi(i/N)w_i
\eeqs
Thus $$\frac1N\sum_{i=1}^N\psi_i^3h_i = -\frac1N\sum_{i=1}^N\psi(i/N)w_i\probto - \int_0^1\psi(x)w(t,x)dx = \int_0^1\psi'''(x)h(t,x)dx.$$
The last equality is by three applications of integration by parts. There are no boundary terms because $\psi$, $h$, and their first three spatial derivatives, are all periodic. 
\end{proof}
For the proof of Claims~\ref{claim:meso},~\ref{claim:PDE}, recall that to a vector $\mbf v = (v_1,\dots, v_N)$ we associate a signed measure on the unit interval, as follows:
\beq\label{v-meas}\mbf v \quad\leftrightarrow\quad v(dx) = \frac1N\sum_{i=1}^Nv_i\delta\l(x-\frac iN\r).\eeq
Also, recall from Remark~\ref{rk:w-meas} that $(\phi\ast\mu)(x) = \int_0^1 \phi(x-y)\mu(dy)$ for a signed measure $\mu$ defined on the unit torus and a function $\phi\in L^1(\mu)$, and that $(\phi_\epsilon\ast w_N(t,\cdot))(x) = \varbar w(t),$ where $\phi_\epsilon(x)= \frac{1}{2\epsilon}\mathbbm{1}_{(-\epsilon,\epsilon)}(x)$. Further, note that if $\phi$ is even, and the function $(x,y)\mapsto \psi(x)\phi(x-y)$ is integrable with respect to $\mu(dy)dx$ on $\unit\times\unit$, then we have the identity
\beqs\label{conv-meas}
\int_0^1 \psi(x)(\phi\ast\mu)(x)dx &= \int_0^1\int_0^1\psi(x)\phi(x-y)\mu(dy)dx \\&=  \int_0^1\mu(dy)\int_0^1\psi(x)\phi(y-x)dx
= \int_0^1(\psi\ast\phi)(y)\mu(dy).
\eeqs

\begin{proof}[Proof of Claim~\ref{claim:meso}]
Since $\psi$ is continuous and hence uniformly continuous on $[0,1]$, we have $\sup_{x\in\unit}|\psi(x) -(\psi\ast\phi_\epsilon)(x)|\to0$ as $\epsilon\to0$, with $\phi_\epsilon$ as above. Using this and~\eqref{bd1}, we have that 
\beqs
\E\big|\int \psi(x)w_N(t,dx) &- \int(\psi\ast\phi_\epsilon)(x)w_N(t,dx)\big| \\
&\leq \max_i|(\psi-\psi\ast\phi_\epsilon)(i/N)|\max_i\E|w_i|
\eeqs goes to zero as $N\to\infty$ and then $\epsilon\to0$. Therefore, it suffices to show $\int(\psi\ast\phi_\epsilon)(x)w_N(t,dx)$ converges in $L^1$ (with respect to randomness) to $\int\psi(x)w(t,x)dx$. Now, $\int(\psi\ast\phi_\epsilon)(x)w_N(t,dx) = \int \psi(x)\varbar w(t)dx$ by~\eqref{conv-meas}, and 
\beq
\E\bigg|\int \psi(x)\varbar w(t)dx - \int\psi(x)w(t,x)dx\bigg|\leq \|\psi\|_{\infty}\int\E|\varbar w(t) - w(t,x)|dx,
\eeq
which goes to zero by the definition of pointwise mesoscopic convergence, combined with~\eqref{bd1} and the continuity of $w$ which allows us to apply Lebesgue Dominated Convergence.
\end{proof}
\begin{proof}[Proof of Claim~\ref{claim:PDE}]
As argued in the main text, the lefthand side of~\eqref{weak-def} is the limit of $\int_\unit\psi(x)\E\l[\varbar w(t)-\varbar w(0)\r]dx$
and
\beqsn
\int_0^1\psi(x)&\E[\varbar w(t)-\varbar w(0)]dx\\&=  N^4\int_0^t\frac1N\sum_{i=1}^N(\psi\ast\phi_\epsilon)\l(\frac iN\r)\E[D_N^4J(w_i(s))]ds,
\eeqsn
where $D_N^4J(w_i) = J(w_{i-2})-4J(w_{i-1}) +6J(w_i)- 4J(w_{i+1}) + J(w_{i+2})$. We can now use that summation by parts yields no boundary terms when the sequences are periodic, as above. Thus we can move $D_N^4$ onto $(\psi\ast\phi_\epsilon)(i/N)$ provided we define $D_N^4(\psi\ast\phi_\epsilon)(i/N)$ as the appropriately shifted fourth order finite difference obtained in the summation by parts, rather than the centered fourth order FD. Thus we can write
\beqsn
\int_0^1\psi(x)\E[&\varbar w(t)-\varbar w(0)]dx\\&=  N^4\int_0^t\frac1N\sum_{i=1}^ND_N^4(\psi\ast\phi_\epsilon)\l(\frac iN\r)\E[J(w_i(s))]ds.
\eeqsn
Since $D_N^4$ is only shifted by a finite number of indices, we still have by the smoothness of $\psi$ that
$$\l|N^4D_N^4(\psi\ast\phi_\epsilon)(i/N)-(\psi^{(4)}\ast\phi_\epsilon)(i/N)\r|\leq \frac CN\|\psi^{(5)}\|_\infty\|\phi_\epsilon\|_{L^1} = \frac CN\|\psi^{(5)}\|_\infty$$ for some constant $C$. Thus,
\beqs
\bigg|\int_0^t\frac1N\sum_{i=1}^N&\l[N^4D_N^4(\psi\ast\phi_\epsilon)-(\psi^{(4)}\ast\phi_\epsilon)\r]\l(\frac iN\r)\E J(w_i(s))ds\bigg|\\
&\leq \frac CN\int_0^t\max_i|\E J(w_i(s))|ds,\eeqs which goes to zero as $N\to\infty$ by the boundedness assumption~\eqref{bd2}. Next, we have
\beqs
\int_0^t&\frac1N\sum_{i=1}^N(\psi^{(4)}\ast\phi_\epsilon)(i/N)\E J(w_i(s))ds \\&= \int_0^t\int_0^1(\psi^{(4)}\ast\phi_\epsilon)(x)\E J(\wN(s))(dx)ds=\int_0^t\int_0^1\psi^{(4)}(x)\E\bar J({\bf w}_{\idxsetx}(s)) dxds,
\eeqs using identity~\eqref{conv-meas} with $\mu(dx) = \E J(\wN(s))(dx)$ $=$ $\frac1N\sum_{i=1}^N\E J(w_i(s))\delta(x-i/N)$ and $\phi=\phi_\epsilon$. By the pointwise convergence of $\E\bar J({\bf w}_{\idxsetx}(s))$ to $\hatJ(w(s,x))$ and boundedness~\eqref{bd2}, we conclude by applying dominated convergence. 
\end{proof}
\section{Justification of Time Averaging}\label{app:num}
We now justify using a sample average estimate of the time average $\E_\Delta[f(w_i(t))]:=\frac1\Delta\int_{I_{t,\Delta}}\E[f(w_i(s))]ds$ in place of a sample average estimate of $\E[f(w_i(t))]$. Let $\E^n$ and $\E^n_\Delta$ denote the $n$-sample estimates of $\E$ and $\E_\Delta$, respectively (see~\eqref{sample-av} and~\eqref{sample-time-av}). We first show that by taking $\Delta$ small enough, decreasing $\Delta$ further has no effect on $\E^n_\Delta[f( w_i)]$, except perhaps to increase its variance. This is shown in the left panels in Figure~\ref{fig:sample-v-time} (a), (b) for $f(w)=J(w)$ and $f(w)=w$, respectively. Fixing $\Delta=2\times 10^{-9}$, we now show that as we increase $n$, the estimate $\E^n[f(w_i)]$ approaches $\E^n_\Delta[f(w_i)]$. See the righthand panels in Figure~\ref{fig:sample-v-time} (a), (b).
\begin{figure}
\centering
\begin{subfigure}[b]{\textwidth}
\includegraphics[width=\textwidth]{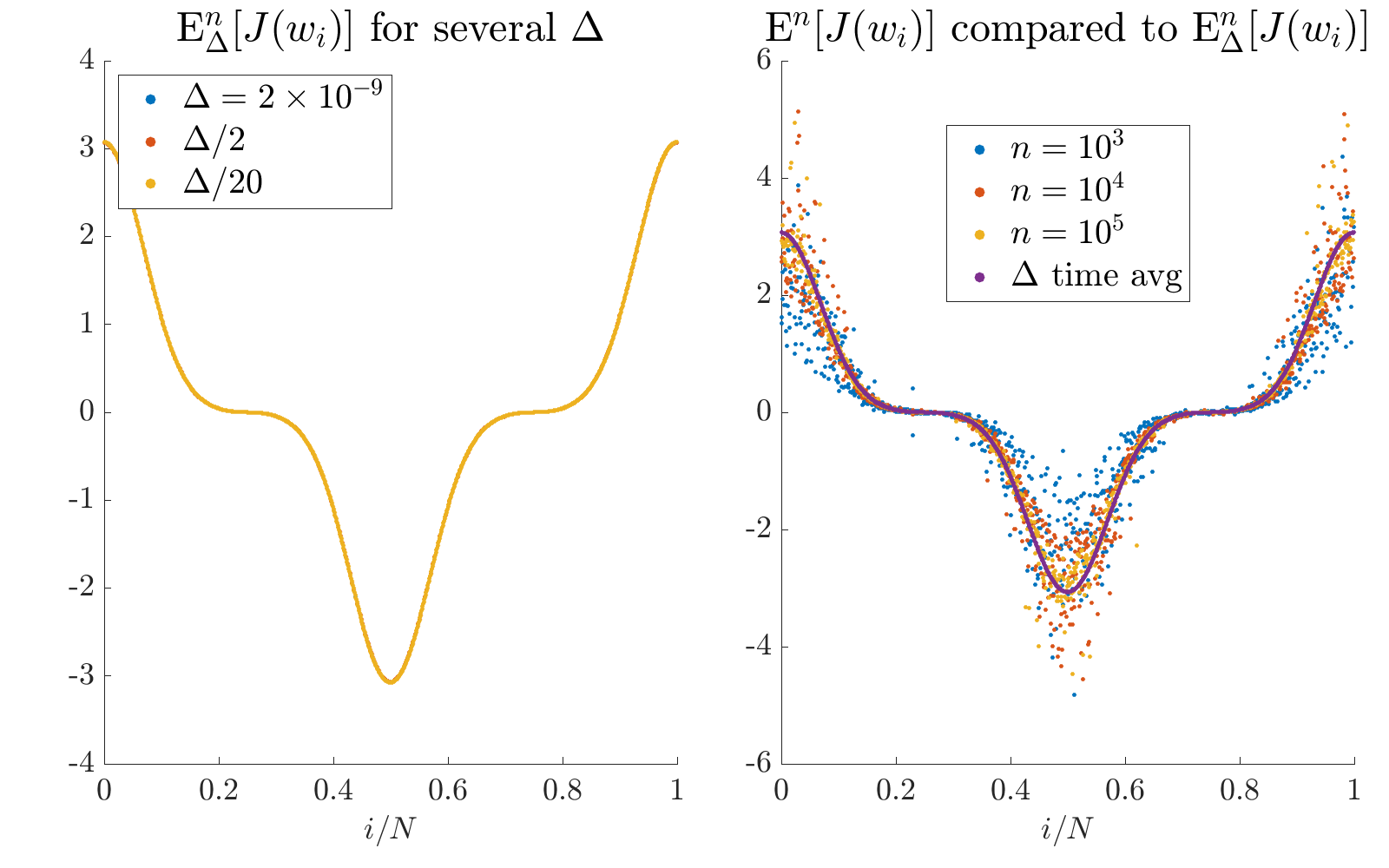}
\caption{Left: we choose $\Delta$ small enough, so that the effect of decreasing $\Delta$ on the estimator $\E^n_\Delta[J( w_i)]$ is negligible. Right: As $n$ increases, the instantaneous-time estimator $\E^n[J( w_i)]$ approaches $\E_\Delta[J( w_i)]$.}
\end{subfigure}
\begin{subfigure}[b]{\textwidth}
\includegraphics[width=\textwidth]{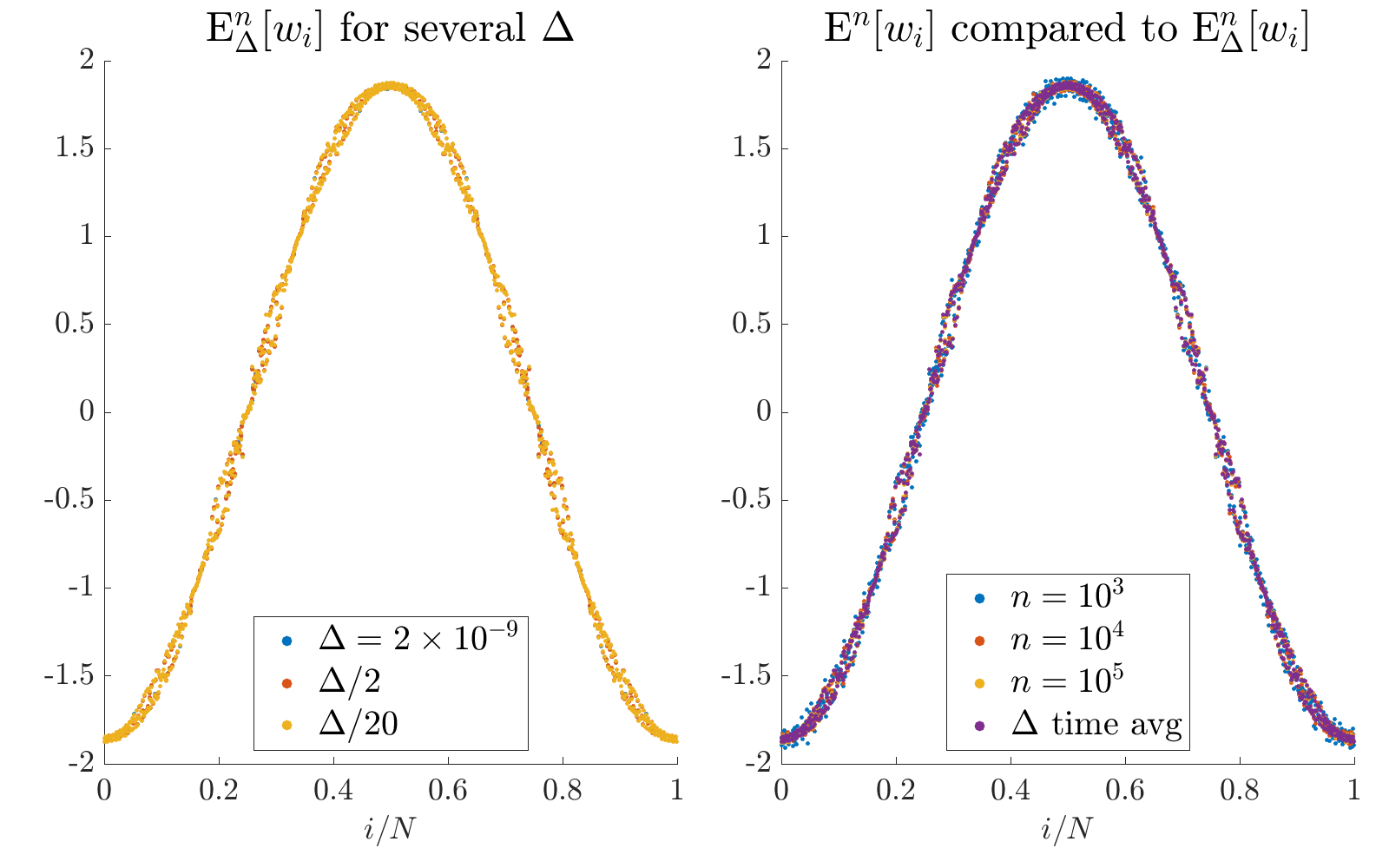}
\caption{Same as above, but now with the observable $\wN\mapsto w_i$.}
\end{subfigure}
\caption{}
\label{fig:sample-v-time}
\end{figure}

\bibliographystyle{alpha}
\bibliography{bibliogr}
\vspace{1cm}

\end{document}